\title{Analysis of { Boundary-Domain Integral Equations to the Mixed BVP for a Compressible Stokes System with Variable Viscosity} 
}
\author{ }
\newcommand{\bs}[1]{\boldsymbol{#1}}
\newcommand{\supp}{\text{supp}}
\renewcommand{\div}{{\rm div\,}}
\newenvironment{proof}{\paragraph{Proof:}}{\hfill$\square$}
\newtheorem{theorem}{Theorem}[section]
\newtheorem{rem}[theorem]{Remark}
\newtheorem{lemma}[theorem]{Lemma}
\newtheorem{corollary}[theorem]{Corollary}
\newtheorem{cor}[theorem]{Corollary}
\numberwithin{equation}{section}
\begin{document}
\maketitle

\begin{center}
S.E. MIKHAILOV, C. F. PORTILLO\footnote{Corresponding author}
\end{center}
\begin{abstract}
 The mixed boundary value problem for a compressible Stokes system of partial differential equations in a bounded domain is reduced to two different systems of segregated direct Boundary Integral Equations (BDIEs)
expressed in terms of surface and volume parametrix-based potential type operators. 
Equivalence of the BDIE systems to the mixed BVP and invertibility of the matrix operators associated with the BDIE systems are proved in appropriate Sobolev spaces.
\end{abstract}
\section{Introduction}

Boundary integral equations and the hydrodynamic potential theory for the Stokes PDE system with constant viscosity have been extensively studied { in numerous publications, cf.}, e.g., \cite{ladynes,lions,hsiao,reidinger,steinbach,kohr1, wenlandzhu}. 
{ The reduction of different boundary value problems for the Stokes system to boundary integral equations in the case of constant viscosity was possible since the fundamental solutions for both, velocity and pressure, are readily available in an explicit form.
Such reduction was used not only to analyse the properties of the Stokes system and BVP solutions, but also to solve BVPs by solving numerically the corresponding boundary integral equations.}

{ In this paper we consider the stationary Stokes PDE system with variable viscosity and compressibility, in a bounded domain that models the motion of a laminar compressible viscous fluid, e.g., through a variable temperature field that makes both, viscosity and compressibility depending on coordinates.
Reduction of the BVPs for the Stokes system with arbitrarily variable viscosity to explicit
boundary integral equations is usually not possible, since the
fundamental solution needed for such reduction is generally not
available in an analytical form (except for some special
dependence of the viscosity on coordinates).
Using a {\it parametrix} ({\it Levi function}) as a substitute of a fundamental solution, in the spirit of \cite{Levi1909}, \cite{Hilbert1912}, it is possible however to reduce such a BVPs to some systems of Boundary-Domain Integral Equations, BDIEs, (cf. e.g.  \cite[Sect. 18]{miranda}, \cite{Pomp1998b, Pomp1998a}, where the
Dirichlet, Neumann and Robin problems for some PDEs were reduced
to {\em indirect} BDIEs)

We will extend here the approach developed in \cite{CMN-1, MikEABEM2002} for a scalar variable-coefficient PDE, and will reduce the mixed boundary value problem for a {\it compressible} Stokes system of partial differential equations to two different systems of segregated direct BDIEs expressed in terms of surface and volume parametrix-based potential type operators.
A parametrix for a { given} PDE (or PDE system) is not unique and a special care will be taken to chose a parametrix that leads to te BDIE systems simple enough to be analysed. 
The mapping properties of the parametrix-based hydrodynamic surface and volume potentials will be obtained and the equivalence and invertibility theorems for the operators associated with the BDIE systems will be proved. 

Some preliminary results in this direction were obtained in \cite{carlos1}, where we derived BDIE systems for the mixed {\it incompressible} Stokes problem in a bounded domain and equivalence between the BVP and BDIE systems was shown, however, invertibility results were not given there.

Note that the paper is mainly aimed not at the mixed boundary-value problem for the Stokes system, which properties are well-known nowadays, but rather at analysis of the BDIE systems per se. The analysis is interesting not only in its own rights but is also to pave the way for studying the corresponding localised BDIEs and analysing convergence and stability of BDIE-based numerical methods for PDEs,
 cf., e.g., \cite{MikEABEM2002, CMN-LocJIEA2009, GMR2013, MikNakJEM, MikMoh2012, SSA2000, SSZ2005, Taigbenu1999, ZZA1998, ZZA1999}.
}

\section{Preliminaries}
Let $\Omega=\Omega^{+}\subset\mathbb R^3$ be a \textit{bounded} and simply{-}connected domain and let $\Omega^{-}:=\mathbb{R}^{3}\smallsetminus \overline{\Omega}^{+}$. 
We will assume that the boundary $\partial\Omega$ is simply{-}connected, closed and infinitely differentiable{.}
Furthermore, ${\partial\Omega} :=\overline{{\partial\Omega}}_{N}\cup \overline{{\partial\Omega}}_{D}$ where both ${\partial\Omega}_{N}$ and ${\partial\Omega}_{D}$ are non-empty, connected disjoint { sub}manifolds of ${\partial\Omega}${, and the interface between} these two submanifolds is also infinitely differentiable{.} 
 
Let $\boldsymbol{v}$ be the velocity vector field;  $p$ the pressure scalar field and $\mu\in \mathcal{C}^{\infty}(\Omega)$ be the variable kinematic viscosity of the fluid such that  $\mu(\boldsymbol{x})>c>0$. 
{ For an arbitrary couple $( p, \boldsymbol{v})$ the stress tensor operator, $\sigma_{ij}$, and the Stokes operator, $\mathcal{A}_{j}$, for a compressible fluid are defined as}
\begin{align}
\sigma_{ji}( p, \boldsymbol{v})(\boldsymbol{x}):&= -\delta_{i}^{j}p(\boldsymbol{x}) + \mu(\boldsymbol{x})\left(\dfrac{\partial v_{i}(\boldsymbol{x})}{\partial x_{j}} + \dfrac{\partial v_{j}(\boldsymbol{x})}{\partial x_{i}}
-\frac{2}{3}\delta_{i}^{j}\div\boldsymbol{v}(\boldsymbol{x})\right),
\label{sigmadef}\\
\mathcal{A}_{j}( p, \boldsymbol{v})(\boldsymbol{x} ):&= 
\frac{\partial}{\partial x_{i}}\sigma_{ji}( p, \boldsymbol{v})(\boldsymbol{x})=\nonumber\\
\frac{\partial}{\partial x_{i}}&\left(\mu(\boldsymbol{x})
\left(\frac{\partial v_{j}(\boldsymbol{x})}{\partial x_{i}} + \frac{\partial v_{i}(\boldsymbol{x})}{\partial x_{j}}
-\frac{2}{3}\delta_{i}^{j} \div\boldsymbol{v}(\boldsymbol{x})\right)\right)
-\frac{\partial p(\boldsymbol{x})}{\partial x_{j}},\quad
j,i\in \lbrace 1,2,3\rbrace,\label{ch2operatorA}
\end{align}
where $\delta_{i}^j$ is the Kronecker symbol. 
{ Henceforth} we assume the Einstein summation in repeated indices from 1 to 3.
{ We denote the Stokes operator as $\boldsymbol{\mathcal{A}}=\{\mathcal{A}_{j}\}_{j=1}^3$ and $\boldsymbol{\mathring{\mathcal{A}}}:=\boldsymbol{\mathcal{A}}|_{\mu=1}$. 
 We will also} use the following notation for derivative operators: $\partial_{j}=\partial_{x_{j}}:=\dfrac{\partial}{\partial x_{j}}$ with $j=1,2,3$; $\nabla:= (\partial_{1}, \partial_{2}, \partial_{3})$. 

In what follows $ H^s(\Omega)$, $H^{s}({\partial\Omega})$ are the
Bessel potential spaces, where $s\in \mathbb R$ is an arbitrary real
number (see, e.g., \cite{lions}, \cite{mclean}). We recall that $H^s$
coincide with the Sobolev--Slobodetski spaces $W^s_2$ for any
non-negative $s$. Let $H^{s}_{K}:= \lbrace g \in H^{1}(\mathbb{R}^{3}): \supp(g)\subseteq K\rbrace$ where $K$ is a compact subset of $\mathbb{R}^{3}$. In what follows we use the bold notation: $\boldsymbol{H}^{s}(\Omega) = [H^{s}(\Omega)]^{3}$ for 3-dimensional vector spaces.
We { denote} 
$
\widetilde{\boldsymbol{H}}^s (\Omega):=\{\bs{g}:\;\bs{g}\in \bs{H}^s  (\mathbb R^3),\; {\rm supp} \,\bs{g}
\subset\overline{\Omega}\}
$ 
{ and,} similarly, 
$\widetilde{\boldsymbol{H}}^{s}(S_{1}){:}=\lbrace \bs{g}\in \bs{H}^{s}({\partial\Omega}),\ {\rm supp}\,\bs{g}\subset\overline{S}_{1}\rbrace${.}

We will also make use of the following space (cf. e.g. \cite{costabel} \cite{CMN-1}){,}
\begin{align}
\label{Hs0def}
\bs{H}^{ s,0}(\Omega;\boldsymbol{\mathcal{A}})&:= 
\lbrace ( p, \boldsymbol{v})\in { H^{s-1}}(\Omega)\times \boldsymbol{H}^{ s}(\Omega):  \boldsymbol{\mathcal{A}}(p,\bs{v})\in \boldsymbol{L}_{2}(\Omega)\rbrace,
\end{align} 
endowed with the norm \begin{align*}
\parallel ( p, \boldsymbol{v}) \parallel_{\bs{H}^{1,0}(\Omega; L)}&:=
\left(\parallel p \parallel^{2}_{{ H^{s-1}}(\Omega)}
+\parallel \boldsymbol{v} \parallel^{2}_{\boldsymbol{H}^{ s}(\Omega)}+\parallel  \boldsymbol{\mathcal{A}}(p,\bs{v}) \parallel^{2}_{\boldsymbol{L}_{2}(\Omega)}\right)^{1/2}.
\end{align*}
{ \begin{rem}\label{R1}
Note that $\bs{H}^{ s,0}(\Omega;\boldsymbol{\mathcal{A}})=\bs{H}^{ s,0}(\Omega;\boldsymbol{\mathring{\mathcal{A}}})$ if $s\ge 1$.

Indeed, ${\mathcal A}_j(p,\bs{v})=\mu{\mathring{\mathcal A}_j}(p,\bs{v})+ B_j(p,\bs{v})$, where
$$
B_j(p,\bs{v}):=
\frac{\partial \mu(\boldsymbol{x})}{\partial x_{i}}
\left(\frac{\partial v_{j}(\boldsymbol{x})}{\partial x_{i}} + \frac{\partial v_{i}(\boldsymbol{x})}{\partial x_{j}}
-\frac{2}{3}\delta_{i}^{j} \div\boldsymbol{v}(\boldsymbol{x})\right)\in \boldsymbol{L}_{2}(\Omega)
$$
if $\bs{v}\in \boldsymbol{H}^{s}(\Omega)$ and $s\ge 1$.
\end{rem}
Similar to \cite[Theorem 3.12]{MikJMAA2011} one can prove the following assertion.
\begin{theorem}\label{densT} The space
$\mathcal D(\overline \Omega)\times\boldsymbol{\mathcal D}(\overline \Omega)$ is dense in 
$\bs{H}^{1,0}(\Omega;  \boldsymbol{\mathcal{A}})$. 
\end{theorem}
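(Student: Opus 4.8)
The plan is to combine Remark~\ref{R1}, a partition of unity and mollification, exploiting crucially that $\boldsymbol{\mathring{\mathcal A}}$ has \emph{constant} coefficients. First I would reduce to the constant-coefficient operator. By Remark~\ref{R1} we have $\bs{H}^{1,0}(\Omega;\boldsymbol{\mathcal A})=\bs{H}^{1,0}(\Omega;\boldsymbol{\mathring{\mathcal A}})$ as sets; moreover $\mathcal A_j(p,\bs v)=\mu\,\mathring{\mathcal A}_j(p,\bs v)+B_j(p,\bs v)$ with $B_j$ bounded from $\bs{H}^1(\Omega)$ into $L_2(\Omega)$ and $0<c\le\mu\le\|\mu\|_\infty$, so the two graph norms are equivalent. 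Hence it suffices to approximate an arbitrary $(p,\bs v)\in\bs{H}^{1,0}(\Omega;\boldsymbol{\mathring{\mathcal A}})$, with $\bs f:=\boldsymbol{\mathring{\mathcal A}}(p,\bs v)\in\boldsymbol L_2(\Omega)$, by elements of $\mathcal D(\overline\Omega)\times\boldsymbol{\mathcal D}(\overline\Omega)$ in the norm $\|p\|_{L_2(\Omega)}+\|\bs v\|_{\bs H^1(\Omega)}+\|\boldsymbol{\mathring{\mathcal A}}(p,\bs v)\|_{\boldsymbol L_2(\Omega)}$; the only delicate requirement is that $\boldsymbol{\mathring{\mathcal A}}$ applied to the approximants converge to $\bs f$ in $\boldsymbol L_2(\Omega)$, and the advantage is that $\boldsymbol{\mathring{\mathcal A}}$ now commutes with translations and with convolutions.

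Next I would localise. Cover $\overline\Omega$ by $U_0\Subset\Omega$ and boundary patches $U_1,\dots,U_N$ on which, in suitable coordinates $(x',x_3)$, $\partial\Omega\cap U_k=\{x_3=\varphi_k(x')\}$ and $\Omega\cap U_k=\{x_3>\varphi_k(x')\}\cap U_k$ for some $\varphi_k\in\mathcal C^\infty$ (possible since $\partial\Omega\in\mathcal C^\infty$), and take a subordinate $\mathcal C^\infty$ partition of unity $\{\chi_k\}_{k=0}^N$. Then $(p,\bs v)=\sum_k(\chi_k p,\chi_k\bs v)$, and each term lies in $\bs{H}^{1,0}(\Omega;\boldsymbol{\mathring{\mathcal A}})$ with support in $\overline\Omega\cap U_k$: multiplication by $\chi_k$ is bounded on $L_2(\Omega)$ and on $\bs{H}^1(\Omega)$, while $\boldsymbol{\mathring{\mathcal A}}(\chi_k p,\chi_k\bs v)=\chi_k\bs f+[\boldsymbol{\mathring{\mathcal A}},\chi_k I](p,\bs v)$, and the commutator is a first-order operator in $\bs v$ and of order zero in $p$, hence bounded into $\boldsymbol L_2(\Omega)$. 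So it is enough to approximate each $g:=(\chi_k p,\chi_k\bs v)$ separately; write $\bs f_k:=\boldsymbol{\mathring{\mathcal A}}g\in\boldsymbol L_2(\Omega)$.

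For $k=0$, $g$ is compactly supported in $\Omega$ and the ordinary mollification $\rho_\varepsilon\ast g$ works: it belongs to $\boldsymbol{\mathcal D}(\Omega)\times\boldsymbol{\mathcal D}(\Omega)$ for small $\varepsilon$, converges in $L_2(\Omega)\times\bs{H}^1(\Omega)$, and $\boldsymbol{\mathring{\mathcal A}}(\rho_\varepsilon\ast g)=\rho_\varepsilon\ast\bs f_k\to\bs f_k$ in $\boldsymbol L_2(\Omega)$. For $k\ge1$, set $g_t(x):=g(x+t e_3)$ for $t>0$; since $g$ is supported in $\{x_3\ge\varphi_k(x')\}$, the field $g_t$ is defined on the full open neighbourhood $\{x_3>\varphi_k(x')-t\}$ of $\overline\Omega\cap U_k$ using only the given data, lies there in $L_2\times\bs{H}^1$ (no jump across $\partial\Omega$ occurs, since $g$ is never extended by zero across it), and $\boldsymbol{\mathring{\mathcal A}}g_t(x)=\bs f_k(x+t e_3)$ by constant-coefficiency. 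Mollifying with $\varepsilon\ll t$, the field $\rho_\varepsilon\ast g_t$ is well defined and smooth on $\overline\Omega$, compactly supported in $\overline\Omega\cap U_k$, hence lies in $\mathcal D(\overline\Omega)\times\boldsymbol{\mathcal D}(\overline\Omega)$; as $t\to0$ with $\varepsilon=\varepsilon(t)\to0$ it converges to $g$ in $L_2(\Omega)\times\bs{H}^1(\Omega)$ by continuity of translation and mollification, while $\boldsymbol{\mathring{\mathcal A}}(\rho_\varepsilon\ast g_t)=\rho_\varepsilon\ast\bigl(\bs f_k(\cdot+t e_3)\bigr)\to\bs f_k$ in $\boldsymbol L_2(\Omega)$ for the same reason. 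Summing over $k$ and passing to a diagonal sequence then completes the proof.

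I expect the boundary step ($k\ge1$) to be the only real obstacle. A naive mollification of the zero-extension of $g$ fails, because the jump of $\nabla\bs v$ across $\partial\Omega$ destroys $\bs{H}^1$-convergence; and for the genuinely variable-coefficient operator $\boldsymbol{\mathcal A}$ an inward translation of an arbitrary extension would create a commutator term of uncontrolled size $O(t/\varepsilon)$ in $\boldsymbol L_2(\Omega)$. Both difficulties are bypassed exactly by first reducing to the constant-coefficient $\boldsymbol{\mathring{\mathcal A}}$ via Remark~\ref{R1} and then translating \emph{towards the exterior side} of the local graph, so that $g_t$ is honestly defined on a neighbourhood of $\overline\Omega$ and only a much finer mollification is needed afterwards; the smoothness of $\partial\Omega$ enters through the $\mathcal C^\infty$ partition of unity and the local graph representations. (A less constructive alternative would be a Hahn--Banach argument: represent, via Riesz in $L_2(\Omega)\times\bs{H}^1(\Omega)\times\boldsymbol L_2(\Omega)$, a functional annihilating $\mathcal D(\overline\Omega)\times\boldsymbol{\mathcal D}(\overline\Omega)$, and use the classical Green identity for smooth fields to deduce that it vanishes.)
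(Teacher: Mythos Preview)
The paper does not actually give a proof of this theorem; it only states that the result can be established ``similar to \cite[Theorem 3.12]{MikJMAA2011}''. Your argument---reducing to the constant-coefficient operator via Remark~\ref{R1}, localising with a smooth partition of unity, and on each boundary patch translating in a direction transversal to the graph before mollifying---is precisely the standard proof of density in graph-norm spaces of this type (cf.\ also \cite[Lemma~3.5]{costabel} for the scalar analogue), and is correct. The only points worth tightening are purely presentational: make explicit that $g=(\chi_k p,\chi_k\bs v)$ is first extended by zero from $\Omega\cap U_k$ to the full supergraph region (harmless since $\chi_k$ vanishes near $\partial U_k$), and that the mollification radius is taken $\varepsilon<ct$ with $c$ depending on the Lipschitz constant of $\varphi_k$, so that the convolution only samples $g_t$ where it is genuinely defined. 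With these minor clarifications your proof is complete and is almost certainly what the referenced theorem does as well.
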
}

The operator $\bs{\mathcal{A}}$ acting on $(p, \bs{v})$ is well defined in the weak sense provided $\mu(\bs{x})\in L^{\infty}(\Omega)$ as
  \[\left\langle \bs{\mathcal{A}}(p, \bs{v}),\bs{u}\right\rangle_{\Omega} := -\mathcal{E}((p, \bs{v}),\bs{u}), \quad \quad \forall \bs{u}\in \widetilde{\bs{H}}^{1}(\Omega),\]  
where the form $ \mathcal{E}: \left[ L^{2}(\Omega)\times\bs{H}^{1}(\Omega)\right] \times \widetilde{\bs{H}}^{1}(\Omega)\to \mathbb{R}$ is defined as 
\begin{equation}\label{ch2mathcalE}
\mathcal{E}\left((p,\bs{v}),\boldsymbol{u}\right) :=\int_{\Omega} \, E\left((p,\bs{v}),\boldsymbol{u}\right)(\boldsymbol{x})\, dx,
\end{equation}
and the function $E\left((p,\bs{v}),\boldsymbol{u}\right)$ is defined as 
\begin{align} 
E\left((p,\bs{v}),\boldsymbol{u}\right)(\boldsymbol{x}):&=\
\dfrac{1}{2}\mu(\bs{x})\left(\frac{\partial u_{i}(\boldsymbol{x})}{\partial x_{j}} + \frac{\partial u_{j}(\boldsymbol{x})}{\partial x_{i}}\right)\left(\frac{\partial v_{i}(\boldsymbol{x})}{\partial x_{j}} + \frac{\partial v_{j}(\boldsymbol{x})}{\partial x_{i}}\right)\nonumber\\
&\quad -\frac{2}{3}\mu(\bs{x}){\div}\boldsymbol{v}(\boldsymbol{x})\,\div \boldsymbol{u}(\boldsymbol{x})
-p(\boldsymbol{x})\div\boldsymbol{u}(\boldsymbol{x})\label{ch2exE}.
\end{align}

For sufficiently smooth functions $(p,\bs{v})\in \bs{H}^{s-1}(\Omega^\pm)\times H^{s}(\Omega^\pm)$ with $s>3/2$,  we can define the classical traction operators, $\bs{T}^{c\pm}=\{T^{c\pm}_{i}\}_{i=1}^3$, on the boundary ${\partial\Omega}$ as
\begin{align}
{ T^{c\pm}_{i}}( p, &\boldsymbol{v})(\boldsymbol{x})
:=[\gamma^{\pm}\sigma_{ij}( p, \boldsymbol{v})(\boldsymbol{x})]n_{j}(\boldsymbol{x})\nonumber\\
&= -n_{i}(\boldsymbol{x})\gamma^{\pm}p(\boldsymbol{x}) + n_j(\boldsymbol{x})\mu(\boldsymbol{x})\gamma^{\pm}\left(\dfrac{\partial v_{i}(\boldsymbol{x})}{\partial x_{j}} + \dfrac{\partial v_{j}(\boldsymbol{x})}{\partial x_{i}}
-\frac{2}{3}\delta_{i}^{j}\div\boldsymbol{v}(\boldsymbol{x})\right),\
\boldsymbol{x}\in\partial\Omega,
\label{ch2Tcl}
\end{align}
where $n_{j}(\boldsymbol{x})$ denote components of the unit outward normal vector $\boldsymbol{n}(\boldsymbol{x})$ to the boundary ${\partial\Omega}$ of the domain $\Omega$ and $\gamma^{\pm}(\,\cdot \,)$ denote the trace operators from inside and outside $\Omega$. { We will sometimes write $\gamma u$ if $\gamma u^+=\gamma u^-$, and similarly for $\bs{T}^c$, etc.}

Traction operators \eqref{ch2Tcl} can be continuously extended to the {\em canonical} traction operators $\boldsymbol{T}^{\pm}: \bs{H}^{1,0}(\Omega^\pm; \boldsymbol{\mathcal{A}}) \to \boldsymbol{H}^{-1/2}({\partial\Omega})$
defined in the weak form \cite[Section 34.1]{carlos1}  similar to {\cite[Lemma 3.2]{costabel}, \cite[Definition 3.8]{MikJMAA2011}, \cite[Definition 2.10]{GKMW2017}} as
\begin{align*}
\langle \boldsymbol{T}^{\pm}( p, \boldsymbol{v}) , \boldsymbol{w}\rangle_{{\partial\Omega}}:=
\pm \int_{\Omega^{\pm}}\left[ \boldsymbol{\mathcal{A}}( p, \boldsymbol{v}) \boldsymbol{\gamma}^{-1}\boldsymbol{w} + E\left((p,\bs{v}),\boldsymbol{\gamma}^{-1}\boldsymbol{w}\right)\right]\, dx,\\ 
\forall\,  ( p, \boldsymbol{v})\in \bs{H}^{1,0}(\Omega^\pm; \boldsymbol{\mathcal{A}}),\ 
 \forall\, \boldsymbol{w}\in \boldsymbol{H}^{1/2}({\partial\Omega}).
\end{align*}
Here the operator 
$\boldsymbol{\gamma}^{-1}:\boldsymbol{H}^{1/2}({\partial\Omega})\to 
\boldsymbol{H}^{1}(\mathbb{R}^{3})$ 
denotes a continuous right inverse of the trace operator 
$\boldsymbol{\gamma}: \boldsymbol{H}^{1}(\mathbb{R}^{3})\to\boldsymbol{H}^{1/2}({\partial\Omega})$.

Furthermore, if $(p,\bs{v})\in \bs{H}^{1,0}(\Omega ; \boldsymbol{\mathcal{A}})$ and $\boldsymbol{u}\in \boldsymbol{H}^{1}(\Omega)$, the following first Green identity holds, \cite[Eq. (34.2)]{carlos1}, cf. also { \cite[Lemma 3.4(i)]{costabel}, \cite[Theorem 3.9]{MikJMAA2011},    \cite[Lemma 2.11]{GKMW2017}}
\begin{equation}\label{ch2GF1}
\langle \boldsymbol{T}^{+}(p,\bs{v}) ,\boldsymbol{\gamma}^{+}\boldsymbol{u}\rangle_{{\partial\Omega}}=\displaystyle\int_{\Omega}[ \boldsymbol{\mathcal{A}}(p,\bs{v})\boldsymbol{u} + E\left((p,\bs{v}),\boldsymbol{u}\right)(\boldsymbol{x})] dx.
\end{equation}

Applying { identity} \eqref{ch2GF1} to the pairs $( p, \boldsymbol{v}), (q,\boldsymbol{u}) \in \bs{H}^{1,0}(\Omega ;\boldsymbol{\mathcal{A}})$ with exchanged roles and subtracting the one from the other, we arrive at the second Green identity, cf. { \cite[Lemma 3.4(ii)]{costabel}, \cite[Eq. 4.8]{MikJMAA2011},    \cite[Lemma 2.11]{GKMW2017}},
\begin{align}\label{ch2secondgreen}
&\int_{\Omega}\left[ \mathcal{A}_{j}( p, \boldsymbol{v})u_{j} - \mathcal{A}_{j}(q, \boldsymbol{u})v_{j} + q\, \div\boldsymbol{v} - p\, \div\boldsymbol{u}\right]\,\, dx =\nonumber\\
& \langle \boldsymbol{T}^{+}(p,\bs{v}) ,\boldsymbol{\gamma}^{+}\boldsymbol{u}\rangle_{{\partial\Omega}} - \langle \boldsymbol{T}^{+}(q,\bs{u}) ,\boldsymbol{\gamma}^{+}\boldsymbol{v}\rangle_{{\partial\Omega}}. 
\end{align}
Now we are ready to define the { following} mixed BVP for which we aim to derive equivalent { BDIE systems} and investigate the existence and uniqueness of their solutions.

{\em For $\boldsymbol{f}\in \boldsymbol{L}_{2}(\Omega)$, $g\in L^{2}(\Omega)$, $\boldsymbol{\varphi}_{0}\in \boldsymbol{H}^{1/2}({\partial\Omega}_{D})$ and $\boldsymbol{\psi}_{0}\in \boldsymbol{H}^{-1/2}({\partial\Omega}_{N})$, find $( p, \boldsymbol{v})\in \bs{H}^{1,0}(\Omega,\boldsymbol{\mathcal{A}})$  such that:}
\begin{subequations}
\label{ch2BVPM}
\begin{align}
\label{ch2BVP1}
             \boldsymbol{\mathcal{A}}(p,\bs{v})(\boldsymbol{x})&=\boldsymbol{f}(\boldsymbol{x}),\hspace{0.5em} \boldsymbol{x}\in\Omega,\\
             \div\boldsymbol{v}(\boldsymbol{x})&=g(\boldsymbol{x}),\hspace{0.5em} \boldsymbol{x}\in\Omega,\label{ch2BVPdiv}\\
 \label{ch2BVPD}     r_{{\partial\Omega}_{D}}\boldsymbol{\gamma}^{+}\boldsymbol{v}(\boldsymbol{x})&=\boldsymbol{\boldsymbol{\boldsymbol{\varphi}}}_{0}(\boldsymbol{x}),
 \hspace{0.3em} \boldsymbol{x}\in {\partial\Omega}_{D},\\
\label{ch2BVPN}      r_{{\partial\Omega}_{N}}\boldsymbol{T}^{+}( p, \boldsymbol{v})(\boldsymbol{x})&=\boldsymbol{\boldsymbol{\boldsymbol{\psi}}}_{0}(\boldsymbol{x}), \hspace{0.3em} \boldsymbol{x}\in {\partial\Omega}_{N}.           
\end{align}
\end{subequations}
Applying the first Green identity it is easy to prove the following uniqueness result.
\begin{theorem}\label{ch2BVPUS}
Mixed BVP \eqref{ch2BVPM} has at most one solution in the space $\bs{H}^{1,0}(\Omega ;\boldsymbol{\mathcal{A}})$.
\end{theorem}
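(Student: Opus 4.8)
Here is how I would argue. The plan is to combine linearity of the problem with the first Green identity \eqref{ch2GF1}. Suppose $(p_1,\bs{v}_1)$ and $(p_2,\bs{v}_2)$ both solve \eqref{ch2BVPM} in $\bs{H}^{1,0}(\Omega;\boldsymbol{\mathcal{A}})$, and set $(p,\bs{v}):=(p_1-p_2,\bs{v}_1-\bs{v}_2)$. Since the data $\boldsymbol{f},g,\boldsymbol{\varphi}_0,\boldsymbol{\psi}_0$ enter \eqref{ch2BVPM} linearly, $(p,\bs{v})\in\bs{H}^{1,0}(\Omega;\boldsymbol{\mathcal{A}})$ solves the homogeneous problem: $\boldsymbol{\mathcal{A}}(p,\bs{v})=\bs{0}$ and $\div\bs{v}=0$ in $\Omega$, $r_{{\partial\Omega}_{D}}\boldsymbol{\gamma}^{+}\bs{v}=\bs{0}$, and $r_{{\partial\Omega}_{N}}\boldsymbol{T}^{+}(p,\bs{v})=\bs{0}$. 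It suffices to show $(p,\bs{v})=(0,\bs{0})$.

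First I would apply \eqref{ch2GF1} with $\boldsymbol{u}=\bs{v}$, which is admissible since $\bs{v}\in\boldsymbol{H}^{1}(\Omega)$. Using $\boldsymbol{\mathcal{A}}(p,\bs{v})=\bs{0}$ this gives
\[
\langle\boldsymbol{T}^{+}(p,\bs{v}),\boldsymbol{\gamma}^{+}\bs{v}\rangle_{{\partial\Omega}}=\int_{\Omega}E\big((p,\bs{v}),\bs{v}\big)(\boldsymbol{x})\,dx.
\]
The left-hand side vanishes: as $\boldsymbol{\gamma}^{+}\bs{v}$ is zero on ${\partial\Omega}_{D}$ it belongs to $\widetilde{\boldsymbol{H}}^{1/2}({\partial\Omega}_{N})$, hence the $\partial\Omega$-duality reduces to a pairing over ${\partial\Omega}_{N}$, namely $\langle r_{{\partial\Omega}_{N}}\boldsymbol{T}^{+}(p,\bs{v}),\boldsymbol{\gamma}^{+}\bs{v}\rangle_{{\partial\Omega}_{N}}$, which is $0$ by the homogeneous Neumann condition. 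On the right-hand side, since $\div\bs{v}=0$ the last two terms of $E$ in \eqref{ch2exE} drop out, leaving
\[
\frac12\int_{\Omega}\mu(\boldsymbol{x})\sum_{i,j=1}^{3}\left(\frac{\partial v_{i}(\boldsymbol{x})}{\partial x_{j}}+\frac{\partial v_{j}(\boldsymbol{x})}{\partial x_{i}}\right)^{2}dx=0.
\]
Since $\mu(\boldsymbol{x})>c>0$, this forces $\partial_{j}v_{i}+\partial_{i}v_{j}=0$ a.e. in $\Omega$.

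Next, a function $\bs{v}\in\boldsymbol{H}^{1}(\Omega)$ with vanishing symmetrised gradient on the connected domain $\Omega$ is a rigid-body motion, $\bs{v}(\boldsymbol{x})=\bs{a}+\bs{b}\times\boldsymbol{x}$ with constant $\bs{a},\bs{b}\in\mathbb{R}^{3}$. As ${\partial\Omega}_{D}$ is a nonempty two-dimensional submanifold, it contains three non-collinear points; the condition $\boldsymbol{\gamma}^{+}\bs{v}=\bs{0}$ on ${\partial\Omega}_{D}$ then yields $\bs{a}+\bs{b}\times\boldsymbol{x}=\bs{0}$ at three affinely independent points, whence $\bs{b}=\bs{0}$ and then $\bs{a}=\bs{0}$; thus $\bs{v}\equiv\bs{0}$ in $\Omega$. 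Finally, with $\bs{v}=\bs{0}$ the equation $\boldsymbol{\mathcal{A}}(p,\bs{0})=-\nabla p=\bs{0}$ shows $p$ is constant on the connected $\Omega$, while \eqref{ch2Tcl} gives $T^{+}_{i}(p,\bs{0})=-n_{i}\gamma^{+}p$, so the homogeneous Neumann condition \eqref{ch2BVPN} forces $\gamma^{+}p=0$ on ${\partial\Omega}_{N}$; being constant, $p\equiv0$. Hence $(p,\bs{v})=(0,\bs{0})$.

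I expect the only genuinely delicate point to be the vanishing of $\langle\boldsymbol{T}^{+}(p,\bs{v}),\boldsymbol{\gamma}^{+}\bs{v}\rangle_{{\partial\Omega}}$: this rests on the assertion that an element of $\boldsymbol{H}^{1/2}({\partial\Omega})$ which is zero on the open piece ${\partial\Omega}_{D}$ in fact lies in $\widetilde{\boldsymbol{H}}^{1/2}({\partial\Omega}_{N})$, so that the pairing over $\partial\Omega$ collapses to one over ${\partial\Omega}_{N}$ annihilated by $r_{{\partial\Omega}_{N}}\boldsymbol{T}^{+}(p,\bs{v})=\bs{0}$; for the smooth boundary and smooth interface assumed here this is standard but should be stated. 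The rigidity implication $e(\bs{v})=0\Rightarrow\bs{v}$ a rigid motion, and the elementary step recovering $\bs{v}\equiv\bs{0}$ from its vanishing trace on ${\partial\Omega}_{D}$, are routine.
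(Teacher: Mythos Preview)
Your proof is correct and follows essentially the same approach as the paper: reduce to the homogeneous problem, apply the first Green identity \eqref{ch2GF1} with $\boldsymbol{u}=\bs{v}$ so that $\div\bs{v}=0$ kills the last two terms of $E$, conclude $\bs{v}$ is a rigid motion and use the Dirichlet condition to get $\bs{v}\equiv\bs{0}$, then use the Neumann condition to get $p\equiv 0$. You actually spell out more carefully than the paper both why the boundary pairing vanishes (via $\boldsymbol{\gamma}^{+}\bs{v}\in\widetilde{\boldsymbol{H}}^{1/2}({\partial\Omega}_{N})$) and the final step $-\nabla p=\bs{0}\Rightarrow p$ constant $\Rightarrow p=0$ via $T^{+}(p,\bs{0})=-\boldsymbol{n}\,\gamma^{+}p$; the paper simply says ``taking into account \eqref{ch2BVPMh}'' and ``keeping in mind \eqref{ch2BVPNh}'' at those points.
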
 
  
\begin{proof}
Let us suppose that there are two possible solutions: $(p_{1}, \bs{v}_{1})$ and $(p_{2}, \bs{v}_{2})$ belonging to the space $\bs{H}^{1,0}(\Omega ;\boldsymbol{\mathcal{A}})$, that satisfy the BVP \eqref{ch2BVPM}. Then, the pair $(p, \bs{v}):=(p_{2}, \bs{v}_{2})-(p_{1}, \bs{v}_{1})$ also belongs to the space $\bs{H}^{1,0}(\Omega ;\boldsymbol{\mathcal{A}})$ and satisfies the following homogeneous mixed BVP
\begin{subequations}
\label{ch2BVPMh}
\begin{align}
\label{ch2BVP1h}
             \boldsymbol{\mathcal{A}}(p,\bs{v})(\boldsymbol{x})&=\boldsymbol{0},\hspace{0.5em} \boldsymbol{x}\in\Omega,\\
             \text{div}(\boldsymbol{v})(\boldsymbol{x})&=0,\hspace{0.5em} \boldsymbol{x}\in\Omega,\label{ch2BVPdivh}\\
 \label{ch2BVPDh}     r_{{\partial\Omega}_{D}}\boldsymbol{\gamma}^{+}\boldsymbol{v}(\boldsymbol{x})&=\boldsymbol{0},
 \hspace{0.3em} \boldsymbol{x}\in {\partial\Omega}_{D},\\
\label{ch2BVPNh}      r_{{\partial\Omega}_{N}}\boldsymbol{T}^{+}( p, \boldsymbol{v})(\boldsymbol{x})&=\boldsymbol{0}, \hspace{0.3em} \boldsymbol{x}\in {\partial\Omega}_{N}.           
\end{align}
\end{subequations}

{ Applying the first Green identity \eqref{ch2GF1} to $(p, \bs{v})$ and $\bs{u}=\bs{v}$ and taking into account \eqref{ch2BVPMh}, we obtain,}
 \[  \int_{\Omega} \
\dfrac{1}{2}\mu(\bs{x})\left(\frac{\partial v_{i}(\boldsymbol{x})}{\partial x_{j}}+\frac{\partial v_{j}(\boldsymbol{x})}{\partial x_{i}}\right)^{2} dx\,=\, 0.\] 
As $\mu(\bs{x})>0$, the only possibility is that $\bs{v}(\bs{x})= \bs{a}+\bs{b}\times \bs{x}$, i.e., $\bs{v}$ is a rigid movement, \cite[Lemma 10.5]{mclean}. Nevertheless, taking into account the Dirichlet condition \eqref{ch2BVPDh}, we deduce that $\bs{v}\equiv \bs{0}$. Hence, $\bs{v}_{1}=\bs{v}_{2}$. 

Considering now $\bs{v}\equiv \bs{0}$ and keeping in mind the Neumann-traction condition \eqref{ch2BVPNh}, it is easy to conclude that $p_{1}=p_{2}$. 
\end{proof} 

\section{Parametrix and Remainder}
When $\mu(\boldsymbol{x})=1$, the operator $\boldsymbol{\mathcal{A}}$ becomes the constant-coefficient Stokes operator $\boldsymbol{\mathring{\mathcal{A}}}$, for  which we know an explicit fundamental solution defined by the pair of { functions} $( \mathring{q}^{k} , \mathring{\boldsymbol{u}}^{k} ),$ where { summation in $k$ is not assumed,} $\mathring{u}_{j}^{k}$ represent components of the incompressible velocity fundamental solution  and $\mathring{q}^{k}$ represent the components of the pressure fundamental solution (see e.g. \cite{ladynes}, \cite{kohr1},  \cite{hsiao}).
\begin{align}
\label{F-q}
\mathring{q}^{k}(\boldsymbol{x},\boldsymbol{y})&
={-}\frac{(x_{k}-y_{k})}{4\pi\vert \boldsymbol{x}-\boldsymbol{y}\vert^{3}}
=\frac{\partial}{\partial x_k}\left(\frac{1}{4\pi\vert \boldsymbol{x} -\boldsymbol{y}\vert}\right),\\
\label{F-u}
\mathring{u}_{j}^{k}(\boldsymbol{x},\boldsymbol{y})&=-\frac{1}{8\pi}\left\lbrace \dfrac{\delta_{j}^{k}}{\vert \boldsymbol{x} - \boldsymbol{y}\vert}+\dfrac{(x_{j}-y_{j})(x_{k}-y_{k})}{\vert \boldsymbol{x} - \boldsymbol{y}\vert^{3}}\right\rbrace, \hspace{0.5em}j,k\in \lbrace 1,2,3\rbrace.
\end{align}
Therefore, { the couple} $(\mathring{q}^{k}, \mathring{\boldsymbol{u}}^{k})$ { satisfies
\begin{align}
\label{dF-q}
\frac{\partial}{\partial x_k}\mathring{q}^{k}(\boldsymbol{x},\boldsymbol{y})
&=\sum_{i=1}^{3}\frac{\partial^2}{\partial x_k^2}\left(\frac{1}{4\pi\vert \boldsymbol{x} -\boldsymbol{y}\vert}\right)
=-\delta(\boldsymbol{x}-\boldsymbol{y}),\\
\mathring{\mathcal{A}}_{j}(\boldsymbol{x})(\mathring{q}^{k}(\boldsymbol{x},\boldsymbol{y}), \mathring{\boldsymbol{u}}^{k} (\boldsymbol{x},\boldsymbol{y}))
&= \sum_{i=1}^{3}\dfrac{\partial^{2} \mathring{u}_{j}^{k}(\boldsymbol{x},\boldsymbol{y})}{\partial x_{i}^{2}}  
- \dfrac{\partial \mathring{q}^{k}(\boldsymbol{x},\boldsymbol{y})}{\partial x_{j} }  = \delta_{j}^{k}\delta(\boldsymbol{x}-\boldsymbol{y}),\quad
\div_{\boldsymbol{x}}\mathring{\boldsymbol{u}}^{k}(\boldsymbol{x},\boldsymbol{y})=0.
\end{align}
}

Let us denote $\mathring{\sigma}_{ij}( p, \boldsymbol{v}):=\sigma_{ij}( p, \boldsymbol{v})\vert_{\mu=1}$,
$\mathring{T}^c_{i}( p, \boldsymbol{v}):=T^c_{i}( p, \boldsymbol{v})\vert_{\mu=1}$.
{ Then by \eqref{sigmadef}
the stress tensor of the fundamental solution}
reads as  
\[\mathring{\sigma}_{ij}{(\boldsymbol{x})(\mathring{q}^{k}(\boldsymbol{x},\boldsymbol{y}),\mathring{\boldsymbol{u}}^{k} (\boldsymbol{x},\boldsymbol{y}))}
= \frac{3}{4\pi}\frac{(x_{i}-y_{i})(x_{j}-y_{j})(x_{k}-y_{k})}{\vert \boldsymbol{x}-\boldsymbol{y}\vert^{5}},\]
and the { classical} boundary traction { of the fundamental solution} becomes
\begin{align*}
{\mathring{T}^c_{i}(\boldsymbol{x}) (\mathring{q}^{k}(\boldsymbol{x},\boldsymbol{y}) ,\mathring{\boldsymbol{u}}^{k} (\boldsymbol{x}, \boldsymbol{y}))}:
&= \mathring{\sigma}_{ij}{(\boldsymbol{x}) (\mathring{q}^{k}(\boldsymbol{x},\boldsymbol{y}) , \mathring{\boldsymbol{u}}^{k}(\boldsymbol{x},\boldsymbol{y}))}\,n_{j}(\boldsymbol{x})\\
&= \frac{3}{4\pi}\frac{(x_{i}-y_{i})(x_{j}-y_{j})(x_{k}-y_{k})}{\vert \boldsymbol{x}-\boldsymbol{y}\vert^{5}}\,n_{j}(\boldsymbol{x}).
\end{align*}

Let us define a pair of functions $(q^{k}, \boldsymbol{u}^{k})_{k=1}^3${,} 
\begin{align}
q^{k}(\boldsymbol{x},\boldsymbol{y})&
=\frac{\mu(\boldsymbol{x})}{\mu(\boldsymbol{y})}\mathring{q}^{k}(\boldsymbol{x},\boldsymbol{y})
={-}\frac{\mu(\boldsymbol{x})}{\mu(\boldsymbol{y})}\dfrac{x_{k}-y_{k}}{4\pi\vert \boldsymbol{x} - \boldsymbol{y}\vert^{3}}, \hspace{0.5em}
j,k\in \lbrace 1,2,3\rbrace.
\label{ch2PRq}\\
u_{j}^{k}(\boldsymbol{x},\boldsymbol{y})&=
\frac{1}{\mu(\boldsymbol{y})}\mathring{u}_{j}^{k}(\boldsymbol{x},\boldsymbol{y}) =-\frac{1}{8\pi\mu(\boldsymbol{y})}\left\lbrace \frac{\delta_{j}^{k}}{\vert \boldsymbol{x} - \boldsymbol{y}\vert}+\frac{(x_{j}-y_{j})(x_{k}-y_{k})}{\vert \boldsymbol{x} - \boldsymbol{y}\vert^{3}}\right\rbrace, \label{ch2PRu}
\end{align}
{ Then by \eqref{sigmadef},}
\begin{align}
&\sigma_{ij}{(\boldsymbol{x}) ({q}^{k}(\boldsymbol{x},\boldsymbol{y}) , {\boldsymbol{u}}^{k}(\boldsymbol{x},\boldsymbol{y}))}=
\frac{\mu(\boldsymbol{x})}{\mu(\boldsymbol{y})}\mathring{\sigma}_{ij}{(\boldsymbol{x}) (\mathring{q}^{k}(\boldsymbol{x},\boldsymbol{y}) , \mathring{\boldsymbol{u}}^{k}(\boldsymbol{x},\boldsymbol{y}))},
\label{squ}\\
&{T}_{i}{(\boldsymbol{x})({q}^{k}(\boldsymbol{x},\boldsymbol{y}), {\boldsymbol{u}}^{k}(\boldsymbol{x},\boldsymbol{y}))}:= 
\sigma_{ij}{(\boldsymbol{x})({q}^{k}(\boldsymbol{x},\boldsymbol{y}) , {\boldsymbol{u}}^{k}(\boldsymbol{x},\boldsymbol{y}))}\,n_{j}(\boldsymbol{x})=
\frac{\mu(\boldsymbol{x})}{\mu(\boldsymbol{y})}\mathring{T}_{i}{(\boldsymbol{x}) (\mathring{q}^{k}(\boldsymbol{x},\boldsymbol{y}) , \mathring{\boldsymbol{u}}^{k}(\boldsymbol{x},\boldsymbol{y}))}.
\label{Tqu}
\end{align}

Substituting \eqref{ch2PRq}-\eqref{ch2PRu} in the Stokes system with variable coefficient, \eqref{ch2operatorA} gives
\begin{equation}\label{ch2param}
\mathcal{A}_{j}{(\boldsymbol{x})({q}^{k}(\boldsymbol{x},\boldsymbol{y}) , {\boldsymbol{u}}^{k}(\boldsymbol{x},\boldsymbol{y}))}=
\delta_{j}^{k}\delta(\boldsymbol{x}-\boldsymbol{y})+R_{kj}(\boldsymbol{x},\boldsymbol{y}),
\end{equation}
where
\begin{align}\label{R}
R_{kj}(\boldsymbol{x},\boldsymbol{y})&=
\frac{1}{\mu(\boldsymbol{y})}\frac{\partial \mu(\boldsymbol{x})}{\partial x_{i}}
\mathring{\sigma}_{ij}{(\boldsymbol{x}) (\mathring{q}^{k}(\boldsymbol{x},\boldsymbol{y}) , \mathring{\boldsymbol{u}}^{k}(\boldsymbol{x},\boldsymbol{y}))}\nonumber\\
&{=\frac{3}{4\pi\mu(\boldsymbol{y})}\frac{\partial \mu(\boldsymbol{x})}{\partial x_{i}}\frac{(x_{i}-y_{i})(x_{j}-y_{j})(x_{k}-y_{k})}{\vert \boldsymbol{x}-\boldsymbol{y}\vert^{5}}}
=\mathcal{O}(\vert \boldsymbol{x}-\boldsymbol{y}\vert)^{-2})
\end{align}
is a  weakly singular remainder.
This implies that
 $(q^{k}, \boldsymbol{u}^{k})$ is a parametrix of the operator  $\boldsymbol{\mathcal{A}}$.
 
{ Note that the parametrix is generally not unique (cf. \cite{carlos2} for BDIEs based on an alternative parametrix for a scalar PDE). The possibility to factor out $\frac{\mu(\boldsymbol{x})}{\mu(\boldsymbol{y})}$ in \eqref{squ}-\eqref{Tqu} and $\frac{\nabla\mu(\boldsymbol{x})}{\mu(\boldsymbol{y})}$ in \eqref{R} is due to the careful choice of the parametrix in form \eqref{ch2PRq}-\eqref{ch2PRu} and this essentially simplifies the analysis of obtained parametrix-based potentials and BDIE systems further on.} 

\section{ Parametrix-based volume and surface potentials}\label{SectionHP}

Let $\rho$ and $\boldsymbol{\rho}$ be sufficiently smooth scalar and vector function on $\overline{\Omega}$, e.g., ${\rho}\in{\mathcal D}(\overline{\Omega})$,
$\boldsymbol{\rho}\in\boldsymbol{\mathcal D}(\overline{\Omega})$. 
Let us define the parametrix-based Newton-type  and  remainder vector potentials { for the velocity,}
\begin{align*}
{[\boldsymbol{\mathcal{U}}\boldsymbol{\rho}]_k}(\boldsymbol{y})=\mathcal{U}_{kj}{\rho}_j(\boldsymbol{y})&:=\displaystyle\int_{\Omega} u_{j}^{k}( \boldsymbol{x},\boldsymbol{y})\rho_{j}(\boldsymbol{x})\hspace{0.05em}dx,\\
{[\boldsymbol{\mathcal{R}}\boldsymbol{\rho}]_k}(\boldsymbol{y})=\mathcal{R}_{kj}{\rho}_j(\boldsymbol{y})&:=
\int_{\Omega} R_{kj}(\boldsymbol{x},\boldsymbol{y})\rho_{j}(\boldsymbol{x})\hspace{0.05em}dx, 
\end{align*}
and the  scalar Newton-type { and remainder potentials for the pressure,}
\begin{align}
\label{Qrho1}
&{[\bs{\mathcal{Q}}\rho]_j}(\boldsymbol{y})=\mathcal{Q}_j{\rho}(\boldsymbol{y})
:=\int_{\Omega} q^{j}( \boldsymbol{y},\boldsymbol{x})\rho(\boldsymbol{x})dx
=-\int_{\Omega} q^{j}( \boldsymbol{x},\boldsymbol{y})\rho(\boldsymbol{x})dx, \quad \\
\label{Qrho2}
&\mathcal{Q}\bs{\rho}(\boldsymbol{y}):={\bs{\mathcal{Q}\cdot}\boldsymbol{\rho}(\boldsymbol{y})=}
\mathcal{Q}_j{\rho}_j(\boldsymbol{y})
=\int_{\Omega} q^{j}( \boldsymbol{y},\boldsymbol{x})\rho_{j}(\boldsymbol{x})dx
=-\int_{\Omega} q^{j}( \boldsymbol{x},\boldsymbol{y})\rho_{j}(\boldsymbol{x})dx, \quad \\
&\mathcal{R}^{\bullet}\boldsymbol{\rho}(\boldsymbol{y})=\mathcal{R}^{\bullet}_j{\rho}_j(\boldsymbol{y}):=
{-}2\,{\rm v.p.}\int_{\Omega} \frac{\partial\mathring{q}^{j}(\boldsymbol{x},\boldsymbol{y})}{\partial x_{i}}
\frac{\partial \mu(\boldsymbol{x})}{\partial x_{i}}\rho_{j}(\boldsymbol{x})dx - \dfrac{4}{3}\rho_{j}\dfrac{\partial \mu}{\partial y_{j}}\label{ch2Rb}\\
&\hspace{8.5em}= -2\left< \partial_i \mathring{q}^j(\cdot,\boldsymbol{y})\, , \, \rho_i\partial_j\mu \right>_{\Omega} 
-2\rho_i(\boldsymbol{y})\partial_i\mu(\boldsymbol{y}), 
\label{ch2Rb-bl}
\end{align}
{ for $\boldsymbol{y}\in\mathbb{R}^3$. 
The integral in \eqref{ch2Rb} is understood as a 3D strongly singular integral in the Cauchy sense. 
The bilinear form in \eqref{ch2Rb-bl} should be understood in the sense of distribution, and the equality between \eqref{ch2Rb} and \eqref{ch2Rb-bl} holds since 
\begin{multline}
\left< \partial_i \mathring{q}^j(\cdot,\boldsymbol{y}), \rho_i\partial_j\mu \right>_{\Omega}  
  =-\left< \mathring{q}^j(\cdot,\boldsymbol{y}) , \partial_i (\rho_i\partial_j\mu) \right>_{\Omega} 
  +\left< n_i \mathring{q}^j(\cdot,\boldsymbol{y}), \rho_i\partial_j\mu \right>_{\partial\Omega}   \\
=-\lim_{\epsilon\to 0}\left< \mathring{q}^j(\cdot,\boldsymbol{y}),
\partial_i (\rho_i\partial_j\mu)\right>_{\Omega_\epsilon} 
      +2\left< n_i \mathring{q}^j(\cdot,\boldsymbol{y}), \rho_i\partial_j\mu \right>_{\partial\Omega}\\ 
 =\lim_{\epsilon\to 0}\left<  \partial_i \mathring{q}^j(\cdot,\boldsymbol{y}),
\rho_i\partial_j\mu\right>_{\Omega_\epsilon} 
- \lim_{\epsilon\to 0}\left< n_i \mathring{q}^j(\cdot,\boldsymbol{y}), 
\rho_i\partial_j\mu \right>_{\partial\Omega_\epsilon\setminus\partial\Omega}\\
={\rm v.p.}\int_{\Omega} \frac{\partial\mathring{q}^{j}(\boldsymbol{x},\boldsymbol{y})}{\partial x_{i}}
\frac{\partial \mu(\boldsymbol{x})}{\partial x_{i}}\rho_{j}(\boldsymbol{x})dx 
- \dfrac{1}{3}\rho_{j}\dfrac{\partial \mu}{\partial y_{j}}           
\end{multline}
where $\Omega_\epsilon=\Omega\setminus\bar B_\epsilon(\boldsymbol{y})$ and $B_\epsilon(\boldsymbol{y})$ is the ball of radius $\epsilon$ centred in $\boldsymbol{y}$, which implies that 
\begin{multline*}
-2\left< \partial_i \mathring{q}^j(\cdot,\boldsymbol{y})\, , \, \rho_i\partial_j\mu \right>_{\Omega} 
-2v_i(\boldsymbol{y})\partial_i\mu(\boldsymbol{y})=\\
-{\rm v.p.}\int_{\Omega} \frac{\partial\mathring{q}^{j}(\boldsymbol{x},\boldsymbol{y})}{\partial x_{i}}
\frac{\partial \mu(\boldsymbol{x})}{\partial x_{i}}\rho_{j}(\boldsymbol{x})dx 
- \dfrac{4}{3}\rho_{j}(\boldsymbol{y})\dfrac{\partial \mu(\boldsymbol{y})}{\partial y_{j}}
=\mathcal{R}^{\bullet}\boldsymbol{\rho}(\boldsymbol{y}). 
\end{multline*}
}

{ Let us now define the parametrix-based velocity} single layer potential, double layer potential and their respective direct values on the boundary, as follows:
\begin{align*}
{[\boldsymbol{V\rho}]_k}(\boldsymbol{y})=V_{kj}{\rho}_j(\boldsymbol{y})&:=
-\int_{{\partial\Omega}}u_{j}^{k}(\boldsymbol{x},\boldsymbol{y})\rho_{j}(\boldsymbol{x})
\, dS(\boldsymbol{x}),\hspace{0.4em}\boldsymbol{y}\notin {\partial\Omega},\\
{[\boldsymbol{W\rho}]_k}(\boldsymbol{y})=W_{kj}{\rho}_j(\boldsymbol{y})&:=
-\int_{{\partial\Omega}}{ T^c_{j}}(\boldsymbol{x};q^{k},\boldsymbol{u}^{k})(\boldsymbol{x},\boldsymbol{y})
\rho_{j}(\boldsymbol{x})
\, dS(\boldsymbol{x}),\hspace{0.4em}\boldsymbol{y}\notin {\partial\Omega}.
\end{align*}

For { the pressure 
we will} need the following single-layer and double layer potentials:
\begin{align*}
{\Pi^s}\boldsymbol{\rho}(\boldsymbol{y})={\Pi^s_j}{\rho}_j(\boldsymbol{y})&:=
{\int_{{\partial\Omega}} \mathring{q}^{j}( \boldsymbol{x},\boldsymbol{y})\rho_{j}(\boldsymbol{x})\hspace{0.05em}dS(\boldsymbol{x})},
\hspace{0.2em}\boldsymbol{y}\notin {\partial\Omega}\\
{\Pi^d}\boldsymbol{\rho}(\boldsymbol{y})={\Pi^d_j}{\rho}_j(\boldsymbol{y})&:=
{ 2\int_{{\partial\Omega}}\frac{\partial \mathring{q}^{j}(\boldsymbol{x},\boldsymbol{y})}{\partial n(\boldsymbol{x})}
\mu(\boldsymbol{x})\rho_{j}(\boldsymbol{x})\hspace{0.05em}dS(\boldsymbol{x})},\hspace{0.2em}\boldsymbol{y}\notin {\partial\Omega}.
\end{align*}


{ It is easy to observe that the parametrix-based integral operators, with the variable coefficient $\mu$,}
can be expressed in terms of the corresponding integral operators for the constant{-}coefficient case, $\mu=1$, { marked by $\mathring{}$,}

\begin{align} \label{ch2relationU}
&{\boldsymbol{\mathcal U}}\boldsymbol{\rho}=
\frac{1}{\mu}\mathring{\boldsymbol{\mathcal U}}\boldsymbol{\rho},\\
&{[\bs{\mathcal{R}\rho}]_k}=
\frac{-1}{\mu}\Big[
\frac{\partial}{\partial y_j}\mathring{\mathcal{U}}_{ki}(\rho_j\partial_i\mu) 
+\frac{\partial}{\partial y_i}\mathring{\mathcal{U}}_{kj}(\rho_j\partial_i\mu)
-\mathring{\mathcal{Q}}_k(\rho_j\partial_j\mu)\Big],\label{ch2relationR}\\
&\bs{\mathcal{Q}}\rho=\dfrac{1}{\mu(\bs{y})}\bs{\mathring{\mathcal Q}}(\mu\rho),\label{ch2relationQ2}\\ 
&{\mathcal{R}^{\bullet}\bs{\rho}}=
-2{\partial_i}\mathring{\mathcal{Q}}_j(\rho_j\partial_i\mu)-2\rho_{j}\partial_j\mu, \label{ch2relationRdot}\\
&{\bs{V}}\boldsymbol{\rho}=
\frac{1}{\mu}{\mathring{\bs{V}}}\boldsymbol{\rho},\qquad
{\bs{W}}\boldsymbol{\rho}=
\frac{1}{\mu}{\mathring{\bs{W}}}(\mu\boldsymbol{\rho}),\label{ch2relationVW}\\
&{\Pi^s\bs{\rho}}=
{\mathring{\Pi}^s\bs{\rho}}, \hspace{3.5em}
{\Pi^d\bs{\rho}}={\mathring{\Pi}^d(\mu\bs{\rho})}.
\label{ch2relationP}
\end{align}

Note that { although the constant-coefficient velocity potentials $\mathring{\boldsymbol{\mathcal U}}\boldsymbol{\rho}$, $\mathring{\bs{V}}\boldsymbol{\rho}$ and $\mathring{\bs{W}}\boldsymbol{\rho}$ are divergence-free in $\Omega^\pm$, the corresponding potentials $\boldsymbol{\mathcal U}\boldsymbol{\rho}$, $\bs{V}\boldsymbol{\rho}$ and $\bs{W}\boldsymbol{\rho}$ are {\em not divergence-free for the variable coefficient $\mu(\boldsymbol{y})$}.
Note also that by \eqref{F-q} and \eqref{Qrho1},
\begin{align}\label{QP}
\mathring{\mathcal{Q}}_j\rho={-}\partial_j\mathcal P_\Delta\rho,
\end{align}
where
\begin{align} 
\mathcal{P}_\Delta{\rho}(\boldsymbol{y})
=-\frac{1}{4\pi}\int_{\Omega} \frac{1}{\vert \boldsymbol{x} -\boldsymbol{y}\vert}\rho(\boldsymbol{x})dx
\end{align}
is the harmonic Newton potential. Hence 
\begin{align}\label{dQ}
\div\mathring{\bs{\mathcal Q}}\rho=\partial_j\mathring{\mathcal{Q}}_j\rho={-}\Delta\mathcal P_\Delta\rho={-}\rho.
\end{align}
Moreover, for the constant-coefficient potentials potentials we have the following well-known relations,
\begin{align}
&\mathring{\mathcal{A}}(\mathring{\Pi}^s\bs{\rho},\bs{\mathring{V}\rho})=\bs{0},\quad
\mathring{\mathcal{A}}(\mathring{\Pi}^d\bs{\rho},\bs{\mathring{W}}\bs{\rho})=\bs{0}\quad \mbox{in } \Omega^\pm,
\label{A0V0}\\
&\mathring{\mathcal{A}}(\mathring{\mathcal{Q}}\bs{\rho},\mathring{\boldsymbol{\mathcal{U}}}\bs{\rho})=\bs{\rho}.
\label{A0U0}
\end{align}
In addition, by \eqref{QP} and \eqref{dQ},
\begin{align}\label{4.15}
\mathring{\mathcal{A}_{j}}(\frac{4}{3}\rho, {-}\mathring{\boldsymbol{\mathcal{Q}}}\rho) 
&= {-}\partial_{i}\left( \partial_i\mathring{\mathcal{Q}_{j}}\rho
+\partial_j\mathring{\mathcal{Q}_{i}}\rho
-\dfrac{2}{3}\delta_{i}^{j}\div\boldsymbol{\mathring{\mathcal{Q}}}\rho\right) 
-\dfrac{4}{3} \partial_j\rho\nonumber\\
&=  {-}(\Delta\mathring{\mathcal{Q}_{j}}\rho
+\partial_j\div\bs{\mathring{\mathcal{Q}}}\rho
-\dfrac{2}{3}\partial_j\div\boldsymbol{\mathring{\mathcal{Q}}}\rho) 
-\dfrac{4}{3} \partial_j\rho=0
\end{align}
}

The following assertions of this section are well-known for the constant coefficient case, see e.g. \cite{kohr1, hsiao}. Then, by relations \eqref{ch2relationU}-\eqref{ch2relationTV} we obtain their counterparts for the variable-coefficient case.
\begin{theorem}\label{ch2thmUR:theo} The following operators are continuous: 
\begin{align}
\bs{\mathcal{U}}&:\widetilde{\bs{H}}^{s}(\Omega) \to \bs{H}^{s+2}(\Omega),\hspace{0.5em} s\in \mathbb{R},\label{ch2OpC1}\\
\bs{\mathcal{U}}&: \bs{H}^{s}(\Omega) \to \bs{H}^{s+2}(\Omega),\hspace{0.5em} s>-1/2,\label{ch2OpC2}\\
\bs{\mathcal Q}&:\widetilde{{H}}^{s}(\Omega)  \to \bs{ H}^{s+1}(\Omega),\hspace{0.5em}s\in \mathbb{R},\label{ch2OpC7}\\
\bs{\mathcal Q}&:{{H}}^{s}(\Omega) \to \bs{ H}^{s+1}(\Omega),\hspace{0.5em}s>-1/2,\label{ch2OpC7b}\\
{\mathcal Q}&:\widetilde{\bs{ H}}^{s}(\Omega)  \to { H}^{s+1}(\Omega),\hspace{0.5em}s\in \mathbb{R},\label{ch2OpC7s}\\
{\mathcal Q}&:{\bs{ H}}^{s}(\Omega) \to { H}^{s+1}(\Omega),\hspace{0.5em}s>-1/2,\label{ch2OpC7bs}\\
\bs{\mathcal{R}}&:\widetilde{\bs{H}}^{s}(\Omega) \to \bs{H}^{s+1}(\Omega),\hspace{0.5em} s\in \mathbb{R},\label{ch2OpC3}\\
\bs{\mathcal{R}}&: \bs{H}^{s}(\Omega) \to \bs{H}^{s+1}(\Omega),\hspace{0.5em} s>-1/2,\label{ch2OpC4}\\ 
\mathcal R^{\bullet}&: \widetilde{\bs{H}}^{s}(\Omega) \to H^{s}(\Omega),\hspace{0.5em}s\in \mathbb{R}.\label{ch2OpC8}\\
\mathcal R^{\bullet}&: \bs{H}^{s}(\Omega) \to H^{s}(\Omega),\hspace{0.5em}s>-1/2,\label{ch2OpC8b}\\ 
(\mathring{\mathcal Q},\bs{\mathcal{U}})&:\bs{H}^{s}(\Omega) \to \bs{H}^{s+2,0}(\Omega;\boldsymbol{\mathcal{A}}),\hspace{0.5em} s\ge 0,\label{ch2H10QU}\\
(\dfrac{4\mu}{3}I,{-}\bs{Q})&:H^{s-1}(\Omega)\to\bs{H}^{s,0}(\Omega;\bs{\mathcal{A}}),\hspace{0.5em} s\ge 1,\label{ch2H10IQ}\\
(\mathcal R^{\bullet},\bs{\mathcal R})&:\bs{H}^{s}(\Omega) \to \bs{H}^{s+1,0}(\Omega;\boldsymbol{\mathcal{A}}),\hspace{0.5em} s\ge 1.\label{ch2H10RR}
\end{align}
\end{theorem}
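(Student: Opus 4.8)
The plan is to pull every assertion back to the constant–coefficient case $\mu\equiv 1$, for which the corresponding statements are classical (cf. \cite{kohr1,hsiao}, and \cite{CMN-1} for the harmonic Newton potential $\mathcal P_\Delta$), and then transfer them to the variable–coefficient operators through the representation formulas \eqref{ch2relationU}--\eqref{ch2relationP}, \eqref{QP}--\eqref{4.15}. It suffices to prove the bounds for $\bs\rho\in\bs{\mathcal D}(\overline\Omega)$ (resp. $\bs\rho\in\bs{\mathcal D}(\Omega)$ for the $\widetilde{\bs H}^s$ statements) and then extend by density. The analytic inputs required are just: (i) $\mathcal P_\Delta$ gains two derivatives of smoothness, hence $\mathring{\bs{\mathcal U}}$ and $\mathring{\mathcal Q}_j=-\partial_j\mathcal P_\Delta$ have orders $-2$ and $-1$; the loss-free range is all $s\in\mathbb R$ on $\widetilde{\bs H}^s(\Omega)$ but only $s>-1/2$ on the non-tilde spaces $\bs H^s(\Omega)$, the restriction coming from the fact that the kernels $1/|x-y|$ and $\partial_{x_j}(1/|x-y|)$ belong, as functions of $x$, only to $H^{1/2-\varepsilon}(\Omega)$; and (ii) since $\mu\in\mathcal C^\infty(\overline\Omega)$ with $\mu>c>0$, multiplication by $\mu^{\pm1}$ or by $\partial_i\mu$ is bounded on $H^\sigma(\Omega)$ for every $\sigma$ and, commuting with the support constraint, also on $\widetilde{\bs H}^\sigma(\Omega)$.

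Granting these, \eqref{ch2OpC1}--\eqref{ch2OpC7bs} follow at once from \eqref{ch2relationU}, \eqref{ch2relationQ2}, \eqref{QP}, since $\bs{\mathcal U}=\tfrac1\mu\mathring{\bs{\mathcal U}}$ and $\bs{\mathcal Q}(\cdot)=\tfrac1\mu\mathring{\bs{\mathcal Q}}(\mu\,\cdot)$ inherit the orders $-2$ and $-1$. The remainder potentials are not analysed through their (weakly, resp. strongly) singular integral definitions — which would yield only a restricted range — but through \eqref{ch2relationR} and \eqref{ch2relationRdot}: in \eqref{ch2relationR} the chain ``multiply by $\partial_i\mu$ (order $0$) $\to$ apply $\mathring{\bs{\mathcal U}}$ or $\mathring{\bs{\mathcal Q}}$ (order $-2$, resp. $-1$) $\to$ differentiate once (order $+1$) $\to$ multiply by $\tfrac1\mu$'' has net order $-1$, giving \eqref{ch2OpC3}--\eqref{ch2OpC4}; in \eqref{ch2relationRdot} the chain ``multiply by $\partial_i\mu\to$ apply $\mathring{\bs{\mathcal Q}}\to$ differentiate once'' has net order $0$, while the extra term $-2\rho_j\partial_j\mu$ is a multiplication, giving \eqref{ch2OpC8}--\eqref{ch2OpC8b}. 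This is exactly where the special form \eqref{ch2PRq}--\eqref{ch2PRu} of the parametrix is exploited: the remainders reduce to the Newton-type operators already controlled.

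For the three $\bs H^{s,0}(\Omega;\bs{\mathcal A})$-statements \eqref{ch2H10QU}--\eqref{ch2H10RR}, the Sobolev regularity of the pressure and velocity components of the image is precisely what the preceding parts provide: the image of each pair lies in $H^{\sigma-1}(\Omega)\times\bs H^{\sigma}(\Omega)$ with $\sigma=s+2$, $s$, $s+1$ respectively, so by \eqref{Hs0def} it only remains to check that $\bs{\mathcal A}$ of the pair lies in $\bs L_2(\Omega)$. Since $\bs{\mathcal A}$ has $\mathcal C^\infty$ coefficients and is of first order in the pressure and second order in the velocity, it maps $H^{\sigma-1}(\Omega)\times\bs H^{\sigma}(\Omega)$ into $\bs H^{\sigma-2}(\Omega)$, which already settles \eqref{ch2H10QU} (where $\sigma-2=s\ge 0$) and \eqref{ch2H10RR} (where $\sigma-2=s-1\ge 0$).

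The genuinely delicate case — the main obstacle — is \eqref{ch2H10IQ}. There mere index-counting only places $\bs{\mathcal A}(\tfrac{4\mu}{3}\rho,-\bs{\mathcal Q}\rho)$ in $\bs H^{s-2}(\Omega)$, which is \emph{not} contained in $\bs L_2(\Omega)$ at the endpoint $s=1$, so one must expose a cancellation. Writing $\bs{\mathcal Q}\rho=\tfrac1\mu\mathring{\bs{\mathcal Q}}(\mu\rho)=-\tfrac1\mu\nabla\mathcal P_\Delta(\mu\rho)$ by \eqref{ch2relationQ2}, \eqref{QP}, carrying out the differentiations in $\bs{\mathcal A}$ by the product rule, and using $\Delta\mathcal P_\Delta=\mathrm{Id}$ together with the constant-coefficient identity \eqref{4.15}, one finds that the top-order terms (those containing $\nabla\Delta\mathcal P_\Delta(\mu\rho)$) cancel, leaving $\bs{\mathcal A}(\tfrac{4\mu}{3}\rho,-\bs{\mathcal Q}\rho)$ equal to an explicit finite sum of products of $\nabla\mu$, $\nabla^2\mu$ with $\mathcal P_\Delta(\mu\rho)$ and its first derivatives, all lying in $\bs H^{s-1}(\Omega)\subseteq\bs L_2(\Omega)$ for $s\ge 1$. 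Once this cancellation is made explicit — again a consequence of the careful choice of parametrix \eqref{ch2PRq}--\eqref{ch2PRu} — the theorem follows. The identities \eqref{A0V0}, \eqref{A0U0} allow, in the same manner, writing $\bs{\mathcal A}$ of the pairs in \eqref{ch2H10QU} and \eqref{ch2H10RR} in closed form, which will be convenient in the sequel.
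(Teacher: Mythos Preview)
Your approach is correct and essentially coincides with the paper's: reduction to the constant-coefficient operators via \eqref{ch2relationU}--\eqref{ch2relationRdot}, derivation of the remainder mapping properties \eqref{ch2OpC3}--\eqref{ch2OpC8b} from those of $\mathring{\bs{\mathcal U}}$ and $\mathring{\bs{\mathcal Q}}$, and identification of \eqref{ch2H10IQ} as the one case where naive order-counting fails and the cancellation \eqref{4.15} must be invoked. The only notable difference is that the paper gives a self-contained inductive proof of the non-tilde mapping property \eqref{ch2OpC2} for $s>-1/2$ (via the commutation identity $\partial_j\mathring{\mathcal U}_{ik}g_k=\mathring{\mathcal U}_{ik}(\partial_j g_k)+\mathring V_{ik}(n_j\gamma^+ g_k)$, bootstrapping from $s\in(-1/2,1/2)$ and interpolating across half-integers) rather than citing it as classical; your kernel-regularity remark explains the threshold $s>-1/2$ heuristically but is not by itself a proof of the two-derivative gain, so the citation to \cite{CMN-1} is doing the real work there.
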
 
\begin{proof}
 Since the surface ${\partial\Omega}$ is infinitely differentiable, the operators $\bs{\mathcal{U}}$ and $\bs{\mathcal{Q}}$ are respectively pseudodifferential operators of order $-2$ and $-1$, see{, e.g.,} \cite[Lemma 5.6.6. and Section 9.1.3]{hsiao}. Then, the continuity of the operators $\bs{\mathcal{U}}$ and $\bs{\mathcal{Q}}$ from the `tilde spaces'{, i.e., \eqref{ch2OpC1} and \eqref{ch2OpC7},} immediately follows by virtue of the mapping properties of the pseudodifferential operators (see, e.g. \cite{eskin,michlin}). Alternatively, these mapping properties are well studied for the constant coefficient case, i.e. operators $\mathring{\bs{\mathcal{U}}}$ and $\mathring{\bs{\mathcal{Q}}}$, see \cite[Lemma 5.6.6]{hsiao}. 
{ Then continuity of operator \eqref{ch2OpC7s}  immediately follows from representation \eqref{Qrho2} and continuity of operator \eqref{ch2OpC7}.
Continuity of the remainder operators \eqref{ch2OpC3} and \eqref{ch2OpC8} is also implied by continuity of operators \eqref{ch2OpC7}, \eqref{ch2OpC1} and relations \eqref{ch2relationR}, \eqref{ch2relationRdot}.}

For the remaining part of the proof, we shall { first} assume that $s\in \left(-1/2, 1/2 \right)$. In this case, $H^{s}(\Omega)$ { can be identified with} $\widetilde{H}^{s}(\Omega)$. Hence, the continuity of the operator \eqref{ch2OpC2} immediately follows from the continuity of \eqref{ch2OpC1}.

Let now $s\in(1/2, 3/2)$ and $\bs{g}=(g_{1},g_{2},g_{3})\in \bs{H}^{s}(\Omega)$. Then $\partial_{j} g_{i}\in H^{s-1}(\Omega)$ and $\gamma^{+}\bs{g}\in \bs{H}^{s-1/2}({\partial\Omega})$. 
Consequently, 
integrating by parts (cf. the proof of Theorem 3.8 in \cite{CMN-1}),
\begin{equation}\label{ch2th38a}
\partial_{j}\mathring{\mathcal{U}}_{ik}g_{k} = \mathring{\mathcal{U}}_{ik}(\partial_{j}g_{k}) + \mathring{V}_{ik}({ n_{j}\gamma^{+}g_{k}}),\quad\quad i,j,k\in\lbrace 1,2,3\rbrace. 
\end{equation}
Keeping in mind { continuity of the operators 
$\mathring{\bs{\mathcal U}}: \bs{H}^{s-1}(\Omega) \to\bs{H}^{s+1}(\Omega)$ 
proved in the previous paragraph and the well-known continuity of the operators  
$\mathring {\bs{ V}}:\bs{H}^{s-1/2}({\partial\Omega})\to \bs{H}^{s+1}(\Omega)$, 
for $s\in \left(1/2,{3}/{2}\right)$, we deduce that the operators 
$\partial_{j}\mathring{\bs{\mathcal U}}: \bs{H}^{s}(\Omega)\to \bs{H}^{s+1}(\Omega)$ are continuous for $s\in \left(1/2,{3}/{2}\right)$. 
Since the Bessel potential spaces and the Sobolev-Slobodetsky spaces are equivalent for non-negative smoothness index, the continuity of operator \eqref{ch2OpC2} for $s\in(1/2, 3/2)$ immediately follows from  the Sobolev-Slobodetsky spaces definition and relation \eqref{ch2relationU}.  
Furthermore,  using representation \eqref{ch2th38a} one can prove by induction that the operator \eqref{ch2OpC2} is continuous also for $s\in(k-1/2,k+1/2)$,  $k\in \mathbb{N}$. 
Continuity} of the operator \eqref{ch2OpC2} for the cases $s=k+1/2$ is proved by applying the theory of interpolation of Bessel potential spaces (see, e.g. \cite[Chapter 4]{triebel}).

Continuity of the operator \eqref{ch2OpC7b} and hence \eqref{ch2OpC7bs} can be proved following a similar argument. 
{ Continuity of} the remainder operators \eqref{ch2OpC4} and \eqref{ch2OpC8b} immediately follows from the continuity of { operators \eqref{ch2OpC2} and \eqref{ch2OpC7b} by relations \eqref{ch2relationR} and \eqref{ch2relationRdot}. 
ontinuity of operator \eqref{ch2H10QU} and \eqref{ch2H10RR} is implied by continuity of operators \eqref{ch2OpC2}, \eqref{ch2OpC7bs} and \eqref{ch2OpC4}, \eqref{ch2OpC8b}, respectively, along with the space definition \eqref{Hs0def}.

Let us prove continuity of operator \eqref{ch2H10IQ}. Let $\rho\in H^{s-1}(\Omega)$ and $g=\mu \rho$. Then by relation \eqref{ch2relationQ2} and mapping property  \eqref{ch2OpC7b}, 
$(\frac{4}{3}\mu \rho,\bs{\mathcal{Q}}\rho)=(\frac{4}{3}g,\frac{1}{\mu}\bs{\mathring{\mathcal{Q}}}g)\in H^{s-1}(\Omega)\times\bs{H}^{s}(\Omega)$  and
\begin{align}
{\mathcal{A}_{j}}\left(\frac{4}{3}\mu \rho, {-}{\boldsymbol{\mathcal{Q}}}\rho\right)
&= {\mathcal{A}_{j}}\left(\frac{4}{3}g, {-}\frac{1}{\mu}\bs{\mathring{\mathcal{Q}}}g\right)
= {\mathring{\mathcal{A}_{j}}}\left(\frac{4}{3}g, {-}\bs{\mathring{\mathcal Q}}g\right)
{+}\partial_i\left[\frac{\partial_i\mu}{\mu}\mathring{\mathcal Q}_jg
+\frac{\partial_j\mu}{\mu}\mathring{\mathcal Q}_ig
-\dfrac{2}{3}\delta_{i}^{j}\frac{\partial_j\mu}{\mu}\mathring{\mathcal Q}_lg\right].
\end{align}
By \eqref{4.15}, 
$
\mathring{\mathcal{A}_{j}}(\frac{4}{3}g, {-}\mathring{\boldsymbol{\mathcal{Q}}}g)=0, 
$
while
$$
\partial_i\left[\frac{\partial_i\mu}{\mu}\mathring{\mathcal Q}_jg
+\frac{\partial_j\mu}{\mu}\mathring{\mathcal Q}_ig
-\dfrac{2}{3}\delta_{i}^{j}\frac{\partial_j\mu}{\mu}\mathring{\mathcal Q}_lg\right]: H^{s-1}(\Omega)\to H^{s-1}(\Omega)
$$
is a continuous operator due to \eqref{ch2OpC7b}, which implies continuity of \eqref{ch2H10IQ}.}
 \end{proof}

\begin{theorem}\label{ch2thRcomp}
 { Let $s>1/2$. The following operators are compact,}
\begin{align*}
&\bs{\mathcal R}: \bs{H}^{s}(\Omega) \to \bs{H}^{s}(\Omega),
\qquad &&{\mathcal R^\bullet}: \bs{H}^{s}(\Omega) \to { H}^{s-1}(\Omega),\\
&\gamma^{+}\bs{\mathcal{R}}: \bs{H}^{s}(\Omega) \to \bs{H}^{s-1/2}({\partial\Omega}),\qquad
&&  \bs{T}^{\pm}(\mathcal{R}^{\bullet},\bs{\mathcal{R}}): \bs{H}(\Omega;\bs{\mathcal{A}}) \to \bs{H}^{-1/2}({\partial\Omega}). 
\end{align*}
\end{theorem}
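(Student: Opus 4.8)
The plan is to deduce compactness of each operator from the already-established continuity results in Theorem \ref{ch2thmUR:theo} combined with the compactness of the embedding $\bs{H}^{t}(\Omega)\hookrightarrow\bs{H}^{s}(\Omega)$ for $t>s$ (Rellich--Kondrachov), and, for the trace and canonical-traction operators, the continuity of $\gamma^{+}$ and $\bs{T}^{\pm}$ on the relevant spaces. The scheme in each case is: write the operator as the composition of a continuous operator into a space with higher smoothness index, followed by a compact embedding (or a trace) into the target space.

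First I would treat $\bs{\mathcal R}:\bs{H}^{s}(\Omega)\to\bs{H}^{s}(\Omega)$. By \eqref{ch2OpC4} the operator $\bs{\mathcal R}:\bs{H}^{s}(\Omega)\to\bs{H}^{s+1}(\Omega)$ is continuous for $s>-1/2$, in particular for $s>1/2$; composing with the compact embedding $\bs{H}^{s+1}(\Omega)\hookrightarrow\bs{H}^{s}(\Omega)$ gives compactness. Identically, $\mathcal R^{\bullet}:\bs{H}^{s}(\Omega)\to H^{s-1}(\Omega)$ follows from \eqref{ch2OpC8b}, which gives continuity $\mathcal R^{\bullet}:\bs{H}^{s}(\Omega)\to H^{s}(\Omega)$, together with the compact embedding $H^{s}(\Omega)\hookrightarrow H^{s-1}(\Omega)$. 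For $\gamma^{+}\bs{\mathcal R}:\bs{H}^{s}(\Omega)\to\bs{H}^{s-1/2}({\partial\Omega})$, I would again use \eqref{ch2OpC4} to land in $\bs{H}^{s+1}(\Omega)$ and then apply the trace operator $\gamma^{+}:\bs{H}^{s+1}(\Omega)\to\bs{H}^{s+1/2}({\partial\Omega})$, which is bounded for $s>1/2$; since $s+1/2>s-1/2$, the embedding $\bs{H}^{s+1/2}({\partial\Omega})\hookrightarrow\bs{H}^{s-1/2}({\partial\Omega})$ is compact, and the composition is compact.

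The remaining operator $\bs{T}^{\pm}(\mathcal R^{\bullet},\bs{\mathcal R}):\bs{H}(\Omega;\bs{\mathcal A})\to\bs{H}^{-1/2}({\partial\Omega})$ is the one requiring a little more care, because the canonical traction $\bs{T}^{\pm}$ is only defined on $\bs{H}^{1,0}(\Omega;\bs{\mathcal A})$ and a priori maps merely continuously to $\bs{H}^{-1/2}({\partial\Omega})$ with no gain of smoothness, so a naive ``continuous-then-embed'' argument does not close. The idea is to exploit extra regularity of the pair $(\mathcal R^{\bullet}\bs{\rho},\bs{\mathcal R}\bs{\rho})$: by \eqref{ch2H10RR}, $(\mathcal R^{\bullet},\bs{\mathcal R}):\bs{H}^{s}(\Omega)\to\bs{H}^{s+1,0}(\Omega;\bs{\mathcal A})$ is continuous for $s\ge 1$, so the image pair has velocity component in $\bs{H}^{s+1}(\Omega)$ and pressure in $H^{s}(\Omega)$; for such smoother pairs the canonical traction coincides with the classical one \eqref{ch2Tcl}, which involves $\gamma^{+}p\in H^{s-1/2}({\partial\Omega})$ and $\gamma^{+}(\partial v)\in\bs{H}^{s-1/2}({\partial\Omega})$, hence $\bs{T}^{+}(\mathcal R^{\bullet}\bs{\rho},\bs{\mathcal R}\bs{\rho})\in\bs{H}^{s-1/2}({\partial\Omega})$ with $s-1/2>1/2>-1/2$. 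The map $\bs{H}^{s}(\Omega)\to\bs{H}^{s-1/2}({\partial\Omega})$ is then continuous, and composing with the compact embedding $\bs{H}^{s-1/2}({\partial\Omega})\hookrightarrow\bs{H}^{-1/2}({\partial\Omega})$ yields compactness. (Here I read $\bs{H}(\Omega;\bs{\mathcal A})$ as $\bs{H}^{1,0}(\Omega;\bs{\mathcal A})$ restricted so that the argument $\bs\rho$ ranges over $\bs{H}^{s}(\Omega)$, $s>1/2$, consistent with the hypothesis.) The main obstacle is precisely this last step: one must justify that on the range of $(\mathcal R^{\bullet},\bs{\mathcal R})$ the weakly-defined operator $\bs{T}^{\pm}$ reduces to the classical boundary traction and therefore gains half a derivative of smoothness on ${\partial\Omega}$; everything else is a routine application of continuity plus Rellich compactness of Sobolev embeddings on $\Omega$ and on ${\partial\Omega}$.
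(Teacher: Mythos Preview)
Your proposal is correct and follows essentially the same approach as the paper's proof: for the first three operators you use the continuity results of Theorem~\ref{ch2thmUR:theo} together with the trace theorem and Rellich compact embeddings, and for $\bs{T}^{\pm}(\mathcal{R}^{\bullet},\bs{\mathcal{R}})$ you exploit the extra regularity $(\mathcal{R}^{\bullet}\bs{g},\bs{\mathcal{R}}\bs{g})\in H^{1}(\Omega)\times\bs{H}^{2}(\Omega)$ to identify the canonical traction with the classical one, land in $\bs{H}^{1/2}(\partial\Omega)$, and then compactly embed into $\bs{H}^{-1/2}(\partial\Omega)$. The only minor point is that for the traction operator the paper (and your invocation of \eqref{ch2H10RR}) takes the domain to be $\bs{H}^{1}(\Omega)$ rather than a general $\bs{H}^{s}(\Omega)$ with $s>1/2$; your parenthetical reading of $\bs{H}(\Omega;\bs{\mathcal{A}})$ is consistent with this.
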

\begin{proof}
The proof of the compactness for the operators $\bs{\mathcal{R}}$, $\gamma^{+}\bs{\mathcal{R}}$ and $\mathcal{R}^{\bullet}$  immediately follows from Theorem \ref{ch2thmUR:theo} and the trace theorem along with the Rellich compact embedding theorem. 

To prove { compactness of the operators $\bs{T}^{\pm}(\mathcal{R}^{\bullet},\bs{\mathcal{R}})$, let us} consider a function $\bs{g}\in\bs{H}^{1}(\Omega)$. 
Then, 
$(\mathcal{R}^{\bullet}\bs{g},\bs{\mathcal{R}g})\in 
H^{1}(\Omega)\times \bs{H}^{2}(\Omega){\subset} \bs{H}^{1,0}(\Omega;\bs{\mathcal{A}})$, 
{ which implies that both canonical and classical conormal derivatives of $(\mathcal{R}^{\bullet}\bs{g},\bs{\mathcal{R}g})$ are well defined and moreover, similar to \cite[Corollary 3.14]{MikJMAA2011} and \cite[Theorem 2.13]{GKMW2017}, one can prove that they coincide, $\bs{T}^{\pm}(\mathcal{R}^{\bullet}\bs{g},\bs{\mathcal{R}}\bs{g})
=\bs{T}^{c\pm}(\mathcal{R}^{\bullet}\bs{g},\bs{\mathcal{R}}\bs{g})\in \bs{H}^{1/2}(\partial\Omega)$.
By definitions \eqref{sigmadef}, \eqref{ch2Tcl}, the operators 
$\bs{T}^{c\pm}: H^{1}(\Omega)\times \bs{H}^{2}(\Omega)\to \bs{H}^{1/2}(\partial\Omega)$
are continuous, which implies that the operators
$\bs{T}^{\pm}(\mathcal{R}^{\bullet},\bs{\mathcal{R}})=\bs{T}^{c\pm}(\mathcal{R}^{\bullet},\bs{\mathcal{R}}): \bs{H}^{1}(\Omega)\to \bs{H}^{1/2}(\partial\Omega)$
are continuous as well.}
Then, compactness {of the operators $\bs{T}^{\pm}(\mathcal{R}^{\bullet},\bs{\mathcal{R}})=\bs{T}^{c\pm}(\mathcal{R}^{\bullet},\bs{\mathcal{R}}): \bs{H}^{1}(\Omega)\to \bs{H}^{-1/2}(\partial\Omega)$} follows from the Rellich compact embedding $\bs{H}^{1/2}({\partial\Omega}){\hookrightarrow}\bs{H}^{-1/2}({\partial\Omega})$.
\end{proof}


\begin{theorem}\label{ch2T3} 
The following operators are continuous,
\begin{align}
\label{ch2OpC5-}
&\bs{V}: \bs{H}^{s}({\partial\Omega}) \to \bs{H}^{s+\frac{3}{2}}(\Omega),
\quad\bs {W}: \bs{H}^{s}({\partial\Omega}) \to \bs{H}^{s+1/2}(\Omega),\quad s\in \mathbb{R},\\
%
&{\Pi^s}:\bs{H}^{s-\frac{3}{2}}({\partial\Omega}) \to H^{s-1}(\Omega), 
\quad
{\Pi^d}: \bs{H}^{s-1/2}({\partial\Omega}) \to H^{s-1}(\Omega),
\quad s\in \mathbb{R},
\label{ch2OpC6}\\
%
&({\Pi^s},\bs{V}):\bs{H}^{-1/2}({\partial\Omega})\to \bs{H}^{1,0}(\Omega;\bs{\mathcal{A}}), 
\quad
({\Pi^d}, \bs{W}):\bs{H}^{1/2}({\partial\Omega})\to \bs{H}^{1,0}(\Omega;\bs{\mathcal{A}}). \label{ch2H10PiW}
\end{align}
\end{theorem}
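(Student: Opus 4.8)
The plan is to reduce all the claimed mapping properties to the well-known constant-coefficient estimates for the Stokes hydrodynamic potentials (cf. \cite{kohr1, hsiao}) via relations \eqref{ch2relationVW}--\eqref{ch2relationP}, and then to obtain the $\bs{\mathcal A}$-regularity in \eqref{ch2H10PiW} by a short direct computation that exploits \eqref{A0V0} together with the divergence-free property of the constant-coefficient velocity potentials.

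\emph{Step 1 (the mappings \eqref{ch2OpC5-}, \eqref{ch2OpC6}).} For the constant-coefficient operators one has the classical estimates $\mathring{\bs V}:\bs H^s({\partial\Omega})\to\bs H^{s+3/2}(\Omega)$, $\mathring{\bs W}:\bs H^s({\partial\Omega})\to\bs H^{s+1/2}(\Omega)$, $\mathring{\Pi}^s:\bs H^{s-3/2}({\partial\Omega})\to H^{s-1}(\Omega)$ and $\mathring{\Pi}^d:\bs H^{s-1/2}({\partial\Omega})\to H^{s-1}(\Omega)$; the last two follow by writing $\mathring q^j(\boldsymbol x,\boldsymbol y)=\partial_{x_j}(4\pi|\boldsymbol x-\boldsymbol y|)^{-1}=-\partial_{y_j}(4\pi|\boldsymbol x-\boldsymbol y|)^{-1}$, so that $\mathring{\Pi}^s\bs\rho$ and $\mathring{\Pi}^d\bs\rho$ are, respectively, a first- and a second-order $\boldsymbol y$-derivative of harmonic single-layer-type potentials, whose mapping properties are standard. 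Then \eqref{ch2OpC5-} and \eqref{ch2OpC6} follow from \eqref{ch2relationVW}, \eqref{ch2relationP} together with the fact that, under the smoothness assumptions on $\mu$, multiplication by $\mu$ and by $1/\mu$ is a continuous operator in each of the Bessel potential spaces that appear here.

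\emph{Step 2 (the mappings \eqref{ch2H10PiW}).} Let $\bs\rho\in\bs H^{-1/2}({\partial\Omega})$. By Step 1 (with $s=1$ and $s=-1/2$, respectively) we have $\Pi^s\bs\rho\in H^0(\Omega)=L_2(\Omega)$ and $\bs V\bs\rho\in\bs H^1(\Omega)$, so it only remains to show $\bs{\mathcal A}(\Pi^s\bs\rho,\bs V\bs\rho)\in\bs L_2(\Omega)$. Put $\mathring p:=\mathring{\Pi}^s\bs\rho=\Pi^s\bs\rho$ and $\mathring{\boldsymbol v}:=\mathring{\bs V}\bs\rho$, so that $\bs V\bs\rho=\mu^{-1}\mathring{\boldsymbol v}=:\boldsymbol v$; recall $\div\mathring{\boldsymbol v}=0$ in $\Omega$ and $\mathring{\bs{\mathcal A}}(\mathring p,\mathring{\boldsymbol v})=\bs 0$ in $\Omega$ by \eqref{A0V0}. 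Using $\mu\,\partial_i v_j=\partial_i\mathring v_j-v_j\partial_i\mu$ and $\mu\,\div\boldsymbol v=-v_k\partial_k\mu$ one gets, since $\div\mathring{\boldsymbol v}=0$,
\[
\sigma_{ji}(\mathring p,\boldsymbol v)=\mathring\sigma_{ji}(\mathring p,\mathring{\boldsymbol v})-v_j\partial_i\mu-v_i\partial_j\mu+\tfrac23\delta_i^j v_k\partial_k\mu ,
\]
and hence, taking $\partial_i$ and using $\mathring{\mathcal A}_j(\mathring p,\mathring{\boldsymbol v})=0$,
\[
\mathcal A_j(\Pi^s\bs\rho,\bs V\bs\rho)=-\partial_i\bigl(v_j\partial_i\mu+v_i\partial_j\mu-\tfrac23\delta_i^j v_k\partial_k\mu\bigr),\qquad \boldsymbol v=\bs V\bs\rho .
\]
Since $\bs V\bs\rho\in\bs H^1(\Omega)$ and $\mu\in\mathcal C^\infty$, the expression in parentheses lies in $\bs H^1(\Omega)$, so its first derivative is in $\bs L_2(\Omega)$; thus $(\Pi^s\bs\rho,\bs V\bs\rho)\in\bs H^{1,0}(\Omega;\bs{\mathcal A})$ with the norm bounded by $C\|\bs\rho\|_{\bs H^{-1/2}({\partial\Omega})}$, proving continuity. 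The argument for $(\Pi^d,\bs W)$ is identical: for $\bs\rho\in\bs H^{1/2}({\partial\Omega})$ set $\mathring p:=\mathring{\Pi}^d(\mu\bs\rho)=\Pi^d\bs\rho$, $\mathring{\boldsymbol v}:=\mathring{\bs W}(\mu\bs\rho)$, so $\bs W\bs\rho=\mu^{-1}\mathring{\boldsymbol v}$, and use $\mathring{\bs{\mathcal A}}(\mathring{\Pi}^d(\mu\bs\rho),\mathring{\bs W}(\mu\bs\rho))=\bs 0$ from \eqref{A0V0}, $\div\mathring{\bs W}(\mu\bs\rho)=0$, together with $\bs W\bs\rho\in\bs H^1(\Omega)$ and $\Pi^d\bs\rho\in L_2(\Omega)$ from Step 1 (with $s=1/2$ and $s=1$).

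\emph{Main obstacle.} Everything apart from the identity in Step 2 is routine bookkeeping with known potential estimates; the only point requiring care is the algebraic check that $\bs{\mathcal A}$ applied to the variable-coefficient surface potentials reduces to a first-order differential operator with smooth coefficients acting on the ($\bs H^1$) velocity potential. This collapse relies crucially on the special form of the parametrix (the factorization of $\mu(\boldsymbol x)/\mu(\boldsymbol y)$ and of $\nabla\mu(\boldsymbol x)/\mu(\boldsymbol y)$ in \eqref{squ}--\eqref{R}) and on the divergence-free property of $\mathring{\bs V}\bs\rho$ and $\mathring{\bs W}\bs\rho$, which is what makes the deviatoric terms combine as above. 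One should also verify that the smoothness hypotheses on $\mu$ are strong enough for $\mu^{\pm1}$ to act as pointwise multipliers on all the Sobolev spaces involved (in particular on the negative-order boundary spaces via duality).
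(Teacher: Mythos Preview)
Your proof is correct and follows essentially the same route as the paper: reduce \eqref{ch2OpC5-}--\eqref{ch2OpC6} to the constant-coefficient estimates via \eqref{ch2relationVW}--\eqref{ch2relationP}, and for \eqref{ch2H10PiW} show that $\bs{\mathcal A}$ applied to the surface potentials equals $\mathring{\bs{\mathcal A}}(\mathring\Pi,\mathring{\bs V})=\bs 0$ plus a first-order correction $-\partial_i(v_j\partial_i\mu+v_i\partial_j\mu-\tfrac23\delta_i^j v_l\partial_l\mu)\in\bs L_2(\Omega)$. The only cosmetic difference is that the paper records this last step as the general identity $\mathcal A_j(p,\bs v)=\mathring{\mathcal A}_j(p,\mu\bs v)-\partial_i[v_j\partial_i\mu+v_i\partial_j\mu-\tfrac23\delta_i^j v_l\partial_l\mu]$ (valid without the divergence-free assumption), whereas you derive the same formula directly at the stress-tensor level using $\div\mathring{\bs v}=0$.
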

\begin{proof} 
The continuity of the operators in \eqref{ch2OpC5-}, \eqref{ch2OpC6} follows from relations \eqref{ch2relationVW}, \eqref{ch2relationP} and the continuity of the counterpart operators for the constant coefficient case, see e.g. \cite{kohr1, hsiao}.

Let us prove continuity of the operators in \eqref{ch2H10PiW}. We first remark that an arbitrary pair $(p,\bs{v})$ belongs to $\bs{H}^{1,0}(\Omega;\bs{\mathcal{A}})$ if $(p,\bs{v})\in L^{2}(\Omega)\times \bs{H}^{1}(\Omega)$ and $\bs{\mathcal{A}}(p,\bs{v})\in \bs{L}^{2}(\Omega)$. 
By expanding the operator  $\mathcal{A}_{j}(\boldsymbol{y}; p,\bs{v})$
\begin{align}
\mathcal{A}_{j}(\boldsymbol{y}; p,\bs{v})&=\mathring{\mathcal{A}}_{j}(\boldsymbol{y}; p,\mu\bs{v})
-\partial_{i}\left[v_{j}(\bs{y})\partial_i\mu(\bs{y}) +v_{i}(\bs{y})\partial_j\mu(\bs{y})-\dfrac{2}{3}\delta_{i}^{j}v_{l}(\bs{y})\partial_l\mu(\bs{y})\right]\label{ch2H10-2},
 \end{align}
we can see that if $\bs{v}\in \bs{H}^{1}(\Omega)$, then the second term in \eqref{ch2H10-2} belongs to $\bs{L}^{2}(\Omega)$. Therefore, we only need to check that $\mathring{\mathcal{A}}_{j}(\boldsymbol{y}; p,\mu\bs{v})\in L^{2}(\Omega)$. 

First, let us prove the corresponding mapping property for the pair the pair $({\Pi^s},\bs{V})$. 
Let $\bs{\Psi}\in \bs{H}^{-1/2}({\partial\Omega})$. 
Then, $({\Pi^s}\bs{\Psi}, \bs{{V}}\bs{\Psi})\in L^{2}(\Omega)\times \bs{H}^{1}(\Omega)$ by virtue of \eqref{ch2OpC5-}, \eqref{ch2OpC6}. 
Now, applying relations \eqref{ch2relationVW} and \eqref{ch2relationP}, $\mathring{\mathcal{A}}_{j}({\Pi^s}\bs{\Psi},\mu\bs{V\Psi}) =\mathring{\mathcal{A}}_{j}(\mathring{\Pi}^s\bs{\Psi},\bs{\mathring{V}\Psi})=0$ in $\Omega$, which completes the proof for the pair $({\Pi^s},\bs{V})$.

Let $\bs{\Phi}\in \bs{H}^{1/2}({\partial\Omega})$. 
By virtue of \eqref{ch2OpC5-}, \eqref{ch2OpC6}, $\left({\Pi^d}\bs{\Phi},\bs{W}\bs{\Phi}\right)\in L^{2}(\Omega)\times \bs{H}^{1}(\Omega)$. 
Moreover, by applying relations \eqref{ch2relationVW} and \eqref{ch2relationP} we deduce $\mathring{\mathcal{A}}_{j}({\Pi^d}\bs{\Phi},\mu\bs{W}\bs{\Phi}) =\mathring{\mathcal{A}}_{j}(\mathring{{\Pi^d}}(\mu\bs{\Phi}),\bs{\mathring{W}}(\mu\bs{\Phi}))=0$ in $\Omega$,  which completes the proof for $({\Pi^d}, \bs{W})$. 
\end{proof}

{ Let us now define direct values on the boundary of the parametrix-based velocity single layer and double layer potentials and introduce the notations for the conormal derivative of the latter,
\begin{align*}
{[\boldsymbol{\mathcal V\rho}]_k}(\boldsymbol{y})=\mathcal{V}_{kj}{\rho}_j(\boldsymbol{y})&:=
-\int_{{\partial\Omega}} u_{j}^{k}(\boldsymbol{x},\boldsymbol{y})\rho_{j}(\boldsymbol{x})
\, dS(\boldsymbol{x}),\hspace{0.4em}\boldsymbol{y}\in {\partial\Omega},\\
{[\boldsymbol{\mathcal W\rho}]_k}(\boldsymbol{y})=\mathcal{W}_{kj}{\rho}_j(\boldsymbol{y})&:=
-\int_{{\partial\Omega}}{ T^c_{j}}(\boldsymbol{x};q^{k},\boldsymbol{u}^{k})(\boldsymbol{x},\boldsymbol{y}) \rho_{j}(\boldsymbol{x})
\, dS(\boldsymbol{x}),\hspace{0.4em}\boldsymbol{y}\in {\partial\Omega},\\
{[\bs{\mathcal{W}}'\boldsymbol{\rho}]_k}(\boldsymbol{y}) =\mathcal{W}'_{kj}{\rho}_j(\boldsymbol{y})&:=
-\int_{{\partial\Omega}}{ T^c_{j}}(\boldsymbol{y};q^{k},\boldsymbol{u}^{k})(\boldsymbol{x},\boldsymbol{y}) \rho_{j}(\boldsymbol{x})
\, dS(\boldsymbol{x}),\hspace{0.4em}\boldsymbol{y}\in {\partial\Omega},\\
{\bs{\mathcal{L}}^{\pm}\boldsymbol{\rho}}(\boldsymbol{y})  
&:= \bs{T}^{\pm}({\Pi^d}\boldsymbol{\rho}, \boldsymbol{W\rho})(\boldsymbol{y}), \quad \boldsymbol{y}\in {\partial\Omega}.
\end{align*}
Here $\bs{T}^{\pm}$ are the { canonical derivative} (traction) operators for the {\em compressible} fluid that are well defined due to continuity of the second operator in \eqref{ch2H10PiW}.

Similar to the potentials in the domain, we can also express the boundary operators in terms of their counterparts with the constant coefficient $\mu=1$,
\begin{align} 
&{\bs{\mathcal V}}\boldsymbol{\rho}
=\frac{1}{\mu}{\mathring{\bs{\mathcal V}}}\boldsymbol{\rho},\qquad
{\bs{\mathcal W}}\boldsymbol{\rho}=
\frac{1}{\mu}{\mathring{\bs{\mathcal W}}}(\mu\boldsymbol{\rho}),\label{ch2relationVWcal}\\
&[\bs{\mathcal{W}}'\boldsymbol{\rho}]_k
= [\mathring{\bs{\mathcal W}}{}'\boldsymbol{\rho}]_k - 
\left(\frac{\partial_i\mu}{\mu}\,[\mathring{\bs{\mathcal V}}\boldsymbol{\rho}]_k+
\frac{\partial_k\mu}{\mu}\,[\mathring{\bs{\mathcal V}}\boldsymbol{\rho}]_i
-\frac{2}{3}\delta_i^k\frac{\partial_j\mu}{\mu}\,[\mathring{\bs{\mathcal V}}\boldsymbol{\rho}]_j
\right)n_i.\label{ch2relationTV}
\end{align}
\begin{theorem}\label{ch2T3cal} 
Let $s\in \mathbb{R}$. Let $S_1$ and $S_2$ be two non empty manifolds on ${\partial\Omega}$ with smooth boundaries $\partial S_1$ and $\partial S_2$, respectively.  Then the following operators are continuous,
\begin{align}
\bs{\mathcal V}&: \bs{H}^{s}({\partial\Omega}) \to \bs{H}^{s+1}({\partial\Omega}),
&\bs{\mathcal W}&: \bs{H}^{s}({\partial\Omega}) \to \bs{H}^{s+1}({\partial\Omega}),\label{cVcWcont}\\
r_{S_{2}}\bs{\mathcal V}&: \widetilde{\bs{H}}^{s}(S_{1}) \to \bs{H}^{s+1}(S_{2}),&
r_{S_{2}}\bs{\mathcal W}&: \widetilde{\bs{H}}^{s}(S_{1}) \to \bs{H}^{s+1}(S_{2}),\\
\bs{\mathcal L}^{\pm}&:\bs{H}^{s}({\partial\Omega}) \to \bs{H}^{s-1}({\partial\Omega}),&
\bs{\mathcal W}'&: \bs{H}^{s}({\partial\Omega}) \to \bs{H}^{s+1}({\partial\Omega}).
\label{cLcWpcont}
\end{align}
 Moreover, the following operators are compact,
\begin{align}
r_{S_{2}}\bs{\mathcal V}: \widetilde{\bs{H}}^{s}(S_{1}) \to \bs{H}^{s}(S_{2}),\label{cVcomp}\\
r_{S_{2}}\bs{\mathcal W}: \widetilde{\bs{H}}^{s}(S_{1}) \to \bs{H}^{s}(S_{2}),\label{cWcomp}\\
r_{S_{2}}\bs{\mathcal W}':\widetilde{\bs{H}}^{s}(S_{1}) \to \bs{H}^{s}(S_{2}).\label{cWpcomp}
\end{align}
\end{theorem}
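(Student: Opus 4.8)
The plan is to deduce everything in Theorem \ref{ch2T3cal} from the corresponding statements for the constant-coefficient operators $\mathring{\bs{\mathcal V}}$, $\mathring{\bs{\mathcal W}}$, $\mathring{\bs{\mathcal W}}'$, $\mathring{\bs{\mathcal L}}^\pm$, which are classical (see \cite{kohr1, hsiao}), together with the explicit relations \eqref{ch2relationVWcal}--\eqref{ch2relationTV}. Since $\mu\in\mathcal C^\infty(\overline\Omega)$ and $\mu>c>0$, multiplication by $\mu$, by $1/\mu$, and by $\partial_i\mu/\mu$ are all continuous operators on $\bs H^{s}(\partial\Omega)$ for every $s\in\mathbb R$ (and they commute with the restriction operators $r_{S_2}$ and preserve the support condition defining $\widetilde{\bs H}^{s}(S_1)$). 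Hence the continuity statements \eqref{cVcWcont} follow at once: $\bs{\mathcal V}\bs\rho=\frac1\mu\mathring{\bs{\mathcal V}}\bs\rho$ is the composition of the continuous map $\mathring{\bs{\mathcal V}}:\bs H^{s}(\partial\Omega)\to\bs H^{s+1}(\partial\Omega)$ with multiplication by $1/\mu$, and similarly $\bs{\mathcal W}\bs\rho=\frac1\mu\mathring{\bs{\mathcal W}}(\mu\bs\rho)$ composes multiplication by $\mu$, then $\mathring{\bs{\mathcal W}}$, then multiplication by $1/\mu$. The localized versions in the second line are obtained by pre-composing with the inclusion $\widetilde{\bs H}^{s}(S_1)\hookrightarrow\bs H^{s}(\partial\Omega)$ and post-composing with $r_{S_2}$, using the analogous localized mapping properties of $\mathring{\bs{\mathcal V}}$, $\mathring{\bs{\mathcal W}}$.

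For $\bs{\mathcal W}'$, formula \eqref{ch2relationTV} writes $[\bs{\mathcal W}'\bs\rho]_k$ as $[\mathring{\bs{\mathcal W}}{}'\bs\rho]_k$ plus a sum of terms of the form $(\text{smooth function})\cdot[\mathring{\bs{\mathcal V}}\bs\rho]\cdot n_i$; since $\mathring{\bs{\mathcal W}}{}':\bs H^{s}(\partial\Omega)\to\bs H^{s+1}(\partial\Omega)$ is continuous (classical) and $\mathring{\bs{\mathcal V}}$ even gains one order, and multiplication by the smooth functions $\partial_i\mu/\mu$ and by the components $n_i$ of the (smooth) normal is continuous on all $\bs H^{s}(\partial\Omega)$, the continuity of $\bs{\mathcal W}':\bs H^{s}(\partial\Omega)\to\bs H^{s+1}(\partial\Omega)$ in \eqref{cLcWpcont} follows. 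For $\bs{\mathcal L}^\pm$ one uses its definition $\bs{\mathcal L}^\pm\bs\rho=\bs T^\pm(\Pi^d\bs\rho,\bs W\bs\rho)$, together with the continuity of the second operator in \eqref{ch2H10PiW}, namely $(\Pi^d,\bs W):\bs H^{1/2}(\partial\Omega)\to\bs H^{1,0}(\Omega;\bs{\mathcal A})$, and the continuity of the canonical traction $\bs T^\pm:\bs H^{1,0}(\Omega;\bs{\mathcal A})\to\bs H^{-1/2}(\partial\Omega)$; combined with the analogous statement for the constant-coefficient $\mathring{\bs{\mathcal L}}^\pm$ and relation \eqref{ch2relationTV}-type identities relating $\bs{\mathcal L}^\pm$ to $\mathring{\bs{\mathcal L}}^\pm$, $\mathring{\bs{\mathcal W}}$, $\mathring{\bs{\mathcal V}}$ and multiplications by $\mu$-dependent smooth functions and by $n_i$, this gives $\bs{\mathcal L}^\pm:\bs H^{s}(\partial\Omega)\to\bs H^{s-1}(\partial\Omega)$ for all $s\in\mathbb R$.

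Finally, the compactness assertions \eqref{cVcomp}--\eqref{cWpcomp} follow by the standard trick: each of $r_{S_2}\bs{\mathcal V}$, $r_{S_2}\bs{\mathcal W}$, $r_{S_2}\bs{\mathcal W}'$ is, by the just-proved continuity statements, bounded as a map $\widetilde{\bs H}^{s}(S_1)\to\bs H^{s+1}(S_2)$, and the embedding $\bs H^{s+1}(S_2)\hookrightarrow\bs H^{s}(S_2)$ is compact by the Rellich-type compact embedding theorem for Sobolev spaces on the compact manifold $\overline{S_2}$; composing a bounded operator with a compact one yields a compact operator. The only point requiring a little care—and the main (mild) obstacle—is making sure that the $\mu$-dependent multiplication operators genuinely preserve the tilde spaces $\widetilde{\bs H}^{s}(S_1)$ and commute appropriately with $r_{S_2}$: this is immediate because multiplication by a function in $\mathcal C^\infty(\overline{\partial\Omega})$ does not enlarge supports and is a pseudodifferential operator of order $0$, so it maps $\widetilde{\bs H}^{s}(S_1)$ continuously into itself; everything else is a routine composition of the classical constant-coefficient mapping and compactness properties with these order-zero multiplications and the restriction/extension operators.
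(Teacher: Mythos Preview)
Your proof is correct and follows essentially the same route as the paper's own proof: reduce the variable-coefficient boundary operators to their constant-coefficient counterparts via relations \eqref{ch2relationVWcal}--\eqref{ch2relationTV}, invoke the classical mapping properties from \cite{kohr1,hsiao}, and then obtain compactness from Rellich. The paper's proof is in fact considerably terser than yours; your version spells out the role of multiplication by smooth functions and the preservation of $\widetilde{\bs H}^{s}(S_1)$, which the paper leaves implicit.

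One small remark: for $\bs{\mathcal L}^\pm$ you appeal to ``\eqref{ch2relationTV}-type identities'' relating $\bs{\mathcal L}^\pm$ to $\mathring{\bs{\mathcal L}}$, but in the paper such an identity only appears explicitly \emph{after} this theorem, as \eqref{ch2jumpL} in Theorem~\ref{ch2jumps2}. That identity is nonetheless elementary (it comes directly from expanding the traction of $(\Pi^d\bs\tau,\bs W\bs\tau)=(\mathring\Pi^d(\mu\bs\tau),\tfrac{1}{\mu}\mathring{\bs W}(\mu\bs\tau))$ and does not depend on the present theorem), so there is no circularity; but strictly speaking you are invoking something not yet on record. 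The paper's own proof is equally brief on this point, so this is a presentational rather than mathematical issue.
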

\begin{proof}
Continuity of operators in \eqref{cVcWcont}-\eqref{cLcWpcont} follows from relations \eqref{ch2relationVWcal}-\eqref{ch2relationTV} and continuity of the counterpart operators for the constant coefficient case, see e.g. \cite{kohr1, hsiao}. 
Then compactness of operators \eqref{cVcomp}-\eqref{cWpcomp} is implied by the Rellich compactness embedding theorem.
\end{proof}
}

\begin{theorem}\label{ch2jumps} If $\boldsymbol{\tau}\in \boldsymbol{H}^{1/2}({\partial\Omega})$, $\boldsymbol{\rho}\in \boldsymbol{H}^{-1/2}({\partial\Omega})$, then the following jump relations hold on $\partial\Omega$:
\begin{align*}
&\gamma^{\pm}\bs{V}\boldsymbol{\rho}=\bs{\mathcal V}\boldsymbol{\rho},\qquad
\gamma^{\pm}\bs{W}\boldsymbol{\tau}=\mp\dfrac{1}{2}\bs{\tau}+\bs{\mathcal W}\boldsymbol{\tau}\\
&\bs{T}^{\pm}({\Pi^s}\boldsymbol{\rho},\boldsymbol{V\rho})=
\pm\frac{1}{2}\bs{\rho}+\bs{\mathcal W'}\boldsymbol{\rho}.
\end{align*}\end{theorem}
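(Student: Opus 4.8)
The plan is to reduce each of the three jump relations to the corresponding well-known jump relation for the constant-coefficient Stokes potentials $\mathring{\bs V}$, $\mathring{\bs W}$, $\mathring{\Pi}^s$, using the factorisation identities \eqref{ch2relationVW}, \eqref{ch2relationP}, \eqref{ch2relationVWcal}, \eqref{ch2relationTV}. First I would recall the classical jump relations (see e.g. \cite{kohr1, hsiao}): for the constant-coefficient single layer velocity potential, $\gamma^\pm\mathring{\bs V}\bs\rho = \mathring{\bs{\mathcal V}}\bs\rho$ (no jump); for the double layer, $\gamma^\pm\mathring{\bs W}\bs\tau = \mp\frac12\bs\tau + \mathring{\bs{\mathcal W}}\bs\tau$; and for the traction of the single layer pair, $\mathring{\bs T}^\pm(\mathring{\Pi}^s\bs\rho,\mathring{\bs V}\bs\rho) = \pm\frac12\bs\rho + \mathring{\bs{\mathcal W}}{}'\bs\rho$.

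For the first relation, since $\mu\in\mathcal C^\infty(\overline\Omega)$ is a smooth multiplier, $\gamma^\pm$ commutes with multiplication by $1/\mu$ (understood as $1/\gamma\mu$ on the boundary), so by \eqref{ch2relationVW} and \eqref{ch2relationVWcal},
\begin{equation*}
\gamma^\pm\bs V\bs\rho = \gamma^\pm\Big(\frac1\mu\mathring{\bs V}\bs\rho\Big) = \frac1\mu\gamma^\pm\mathring{\bs V}\bs\rho = \frac1\mu\mathring{\bs{\mathcal V}}\bs\rho = \bs{\mathcal V}\bs\rho.
\end{equation*}
For the double layer, by \eqref{ch2relationVW}, $\bs W\bs\tau = \frac1\mu\mathring{\bs W}(\mu\bs\tau)$, hence
\begin{equation*}
\gamma^\pm\bs W\bs\tau = \frac1\mu\gamma^\pm\mathring{\bs W}(\mu\bs\tau) = \frac1\mu\Big(\mp\frac12\mu\bs\tau + \mathring{\bs{\mathcal W}}(\mu\bs\tau)\Big) = \mp\frac12\bs\tau + \bs{\mathcal W}\bs\tau,
\end{equation*}
using \eqref{ch2relationVWcal} for the last equality.

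For the third relation I must be more careful, because $\bs T^\pm$ is the \emph{variable-coefficient} canonical traction, which is well defined on the pair $(\Pi^s\bs\rho,\bs V\bs\rho)\in\bs H^{1,0}(\Omega^\pm;\bs{\mathcal A})$ by the second operator in \eqref{ch2H10PiW}. The idea is that on the smooth single-layer pair (which, away from $\partial\Omega$, is at least $\bs H^{3/2}$ in velocity and $H^{1/2}$ in pressure, and in fact smoother near $\partial\Omega$ from one side since it solves $\bs{\mathcal A}=0$ there) the canonical traction coincides with the classical one, and the classical traction is computed pointwise from $\sigma_{ij}$; then \eqref{squ}--\eqref{Tqu} give $T_i(\bs x;q^k,\bs u^k) = \frac{\mu(\bs x)}{\mu(\bs y)}\mathring T_i(\bs x;\mathring q^k,\mathring{\bs u}^k)$, while the pressure part contributes through $\Pi^s = \mathring\Pi^s$ and the extra $\nabla\mu$-terms in \eqref{ch2relationTV}. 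Concretely, using $\bs T^c_i(p,\bs v) = \mathring T^c_i(p,\mu\bs v) - (\text{lower-order }\nabla\mu\text{ terms})n_j$ analogous to \eqref{ch2H10-2}, together with $\bs V\bs\rho = \frac1\mu\mathring{\bs V}\bs\rho$ so that $\mu\bs V\bs\rho = \mathring{\bs V}\bs\rho$, and $\Pi^s\bs\rho = \mathring\Pi^s\bs\rho$, the leading term reproduces $\mathring{\bs T}^\pm(\mathring\Pi^s\bs\rho,\mathring{\bs V}\bs\rho) = \pm\frac12\bs\rho + \mathring{\bs{\mathcal W}}{}'\bs\rho$, whose jump $\pm\frac12\bs\rho$ survives; the remaining $\nabla\mu$-terms are continuous across $\partial\Omega$ (they involve traces of $\mathring{\bs V}\bs\rho$, which has no jump) and assemble exactly into the correction in \eqref{ch2relationTV}, yielding $\pm\frac12\bs\rho + \bs{\mathcal W}'\bs\rho$.

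\textbf{Main obstacle.} The delicate point is the third relation: justifying that the canonical traction $\bs T^\pm$ applied to the variable-coefficient single-layer pair agrees with the classical (pointwise) traction, so that the algebraic factorisations \eqref{squ}--\eqref{Tqu} and \eqref{ch2relationTV} may legitimately be used, and keeping track of the $\nabla\mu$-terms and the outward normal orientation so that the $\pm\frac12$ jump comes out with the correct sign on both sides. This requires a one-sided regularity/density argument (in the spirit of \cite[Corollary 3.14]{MikJMAA2011}, \cite[Theorem 2.13]{GKMW2017}, already invoked in the proof of Theorem~\ref{ch2thRcomp}): approximate $\bs\rho\in\bs H^{-1/2}(\partial\Omega)$ by smooth densities, for which $\Pi^s\bs\rho$ and $\bs V\bs\rho$ are classically smooth up to $\partial\Omega$ from each side, establish the identity there by the classical computation, and pass to the limit using continuity of all operators involved (Theorems~\ref{ch2T3}, \ref{ch2T3cal}). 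The first two relations are essentially immediate once the classical ones are quoted and $\gamma^\pm$ is commuted through the smooth factor $1/\mu$.
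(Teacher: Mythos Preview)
Your proposal is correct and follows essentially the same approach as the paper: reduce each jump relation to its constant-coefficient counterpart via the factorisation identities \eqref{ch2relationVW}, \eqref{ch2relationP}, \eqref{ch2relationVWcal}, \eqref{ch2relationTV}, and then invoke the classical jump relations for $\mathring{\bs V}$, $\mathring{\bs W}$, $\mathring{\Pi}^s$ (cf.\ \cite[Lemma 5.6.5]{hsiao}). Your treatment of the third relation is in fact more careful than the paper's one-line proof, correctly flagging the canonical-versus-classical traction issue and the density argument needed to bridge it---a subtlety the paper defers to the analogous (and more detailed) proof of Theorem~\ref{ch2jumps2}.
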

\begin{proof}
The proof of the theorem directly follows from relations \eqref{ch2relationVW},  \eqref{ch2relationVWcal}-\eqref{ch2relationTV} and the analogous jump properties for the counterparts of the operators for the constant coefficient case of $\mu=1$, see \cite[Lemma 5.6.5]{hsiao}.
\end{proof}

Let denote
\begin{align}
&
\mathring{\bs{\mathcal L}}\boldsymbol{\tau}(\boldsymbol{y})  
=\mathring{\bs{\mathcal L}}^\pm\boldsymbol{\tau}(\boldsymbol{y})  
:= \mathring{\bs{T}}^{\pm}(\mathring{\Pi}^d\boldsymbol{\tau}, \mathring{\bs{W}}\bs{\tau})(\boldsymbol{y}), \quad
\widehat{\bs{\mathcal L}}\boldsymbol{\tau}(\boldsymbol{y})
:= \mathring{\bs{\mathcal L}}(\mu\boldsymbol{\tau})(\boldsymbol{y}),\quad \boldsymbol{y}\in {\partial\Omega},\label{ch2relationL}
\end{align}
where the first equality is implied by Lyapunov-Tauber theorem for the constant-coefficient Stokes potentials.

\begin{theorem}\label{ch2jumps2} Let $\boldsymbol{\tau}\in \boldsymbol{H}^{1/2}({\partial\Omega})$. Then, the following jump relation holds:
\begin{align}
&(\mathcal{L}_{k}^{\pm}-\widehat{\mathcal{L}}_k)\boldsymbol{\tau}=\nonumber\\
 &\gamma^{\pm}\left(\mu\left[\partial_i\left(\dfrac{1}{\mu}\right)\,\mathring{W}_k(\mu\boldsymbol{\tau})+
\partial_k\left(\dfrac{1}{\mu}\right)\,\mathring{W}_i(\mu\boldsymbol{\tau})
-\frac{2}{3}\delta_i^k\partial_j\left(\dfrac{1}{\mu}\right)\,\mathring{W}_j(\mu\boldsymbol{\tau})\right]\right)n_{i}
\label{ch2jumpL}.
\end{align}
\begin{proof}
By Theorem \ref{ch2T3}, the operator 
$({\Pi^d}, \bs{W}):\bs{H}^{1/2}({\partial\Omega})\to \bs{H}^{1,0}(\Omega;\bs{\mathcal{A}})$
is continuous.
By Theorem~\ref{densT}, there exists a sequence
$(\pi^m,\bs{w}^m)_{m=1}^\infty\subset\mathcal D(\overline \Omega)\times\boldsymbol{\mathcal D}(\overline \Omega)$ converging to $(\mathring{\Pi}^d(\mu\bs{\tau}), \mathring{\bs{W}}(\mu\bs{\tau}))$ in $\bs{H}^{1,0}(\Omega;  \boldsymbol{\mathcal{A}})$. 
Then, due to \eqref{ch2relationVW}-\eqref{ch2relationP}, the sequence $(\pi^m,\dfrac{1}{\mu}\bs{w}^m)_{m=1}^\infty\subset
\mathcal D(\overline \Omega)\times\boldsymbol{\mathcal D}(\overline \Omega)$ 
converges to 
$(\mathring{\Pi^d}(\mu\bs{\tau}), \dfrac{1}{\mu}\mathring{\bs{W}}(\mu\bs{\tau}))
=({\Pi^d}\bs{\tau}, {\bs{W}}\bs{\tau})$ in $\bs{H}^{1,0}(\Omega;  \boldsymbol{\mathcal{A}})$ and 
by continuity of the canonical traction operators $\boldsymbol{T}^{\pm}: \bs{H}^{1,0}(\Omega^\pm; \boldsymbol{\mathcal{A}}) \to \boldsymbol{H}^{-1/2}({\partial\Omega})$
\begin{align}
\mathcal{L}_{k}^{\pm}\bs{\tau}:=T_{k}^{\pm}({\Pi^d}\bs{\tau}, \bs{W\tau})
=T_{k}^{\pm}({\Pi^d}\bs{\tau}, \bs{W\tau})
=\lim_{m\to\infty}T_{k}^{\pm}(\pi^m,\dfrac{1}{\mu}\bs{w}^m).
\end{align}
On the other hand,
\begin{align*}
&T_{k}^{\pm}(\pi^m,\dfrac{1}{\mu}\bs{w}^m)=T_{k}^{c\pm}(\pi^m,\dfrac{1}{\mu}\bs{w}^m)
=\gamma^{\pm}\sigma_{ik}(\pi^m,\dfrac{1}{\mu}\bs{w}^m)n_{i}\\
&=\gamma^{\pm}\mathring{\sigma}_{ik}(\pi^m,\bs{w}^m)n_{i} 
+ \gamma^{\pm}\left(\mu\left[\partial_i\left(\dfrac{1}{\mu}\right)\,w^m_k+
\partial_k\left(\dfrac{1}{\mu}\right)\,w^m_i
-\frac{2}{3}\delta_i^k\partial_j\left(\dfrac{1}{\mu}\right)\,w^m_j\right]\right)n_{i}\\
&\to\mathring{\mathcal{L}}_{k}^{\pm}(\mu\bs{\tau}) 
+ \gamma^{\pm}\left(\mu\left[\partial_i\left(\dfrac{1}{\mu}\right)\,\mathring{W}_k(\mu\boldsymbol{\tau})+
\partial_k\left(\dfrac{1}{\mu}\right)\,\mathring{W}_i(\mu\boldsymbol{\tau})
-\frac{2}{3}\delta_i^k\partial_j\left(\dfrac{1}{\mu}\right)\,\mathring{W}_j(\mu\boldsymbol{\tau})\right]\right)n_{i}\\ 
\end{align*}
since 
$$\gamma^{\pm}\mathring{\sigma}_{ik}(\pi^m,\bs{w}^m)n_{i}=\mathring{T}_{k}^{c\pm}(\pi^m,\bs{w}^m)
=\mathring{T}_{k}^{\pm}(\pi^m,\bs{w}^m)\to 
\mathring{T}_{k}^{\pm}(\mathring{\Pi}^d(\mu\bs{\tau}), \mathring{\bs{W}}(\mu\bs{\tau}))
=\mathring{\mathcal{L}}_{k}^{\pm}(\mu\bs{\tau})$$
as $m\to \infty$. 
This implies \eqref{ch2jumpL}.
\end{proof}

\begin{corollary}\label{ch2Lcompact}Let $S_{1}$ be a non empty submanifold of ${\partial\Omega}$ with smooth boundary. Then, the operators
\begin{align}\label{4.48}
\bs{\widehat{\mathcal{L}}}&: \bs{\widetilde{H}}^{1/2}(S_{1}) \to \bs{H}^{-1/2}({\partial\Omega}),\quad
(\bs{\mathcal{L}}^{\pm}-\bs{\widehat{\mathcal{L}}}):\bs{\widetilde{H}}^{1/2}(S_{1}) \to \bs{H}^{1/2}({\partial\Omega}),
\end{align}
are continuous and the operators
\begin{align}\label{4.49}
&(\bs{\mathcal{L}}^{\pm}-\bs{\widehat{\mathcal{L}}}): \bs{\widetilde{H}}^{1/2}(S_{1}) \to \bs{H}^{-1/2}({\partial\Omega}),
\end{align}
are compact.
\end{corollary}

\begin{proof}
The continuity of operators in \eqref{4.48} follows from Theorems \ref{ch2jumps2} and \ref{ch2T3}. The compactness of the operators \eqref{4.48} follows from the continuity of the second operators in \eqref{4.48} ands the compact embedding $\bs{H}^{1/2}(S_{1}){\hookrightarrow} \bs{H}^{-1/2}(S_{1})$.
\end{proof}

\end{theorem}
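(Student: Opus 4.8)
The plan is to verify \eqref{ch2jumpL} first for smooth densities, where the tractions are classical and can be computed pointwise, and then to pass to the limit, using that $\bs{\mathcal L}^{\pm}\bs{\tau}=\bs{T}^{\pm}(\Pi^d\bs{\tau},\bs{W}\bs{\tau})$ factors through $\bs{H}^{1,0}(\Omega^{\pm};\bs{\mathcal A})$, a space in which smooth pairs are dense. First I would fix $\bs{\tau}\in\bs{H}^{1/2}({\partial\Omega})$. Since $\mu\in\mathcal C^{\infty}(\overline\Omega)$ is bounded away from zero, multiplication by $\mu$ and by $1/\mu$ in the velocity slot are mutually inverse continuous self-maps of $\bs{\mathcal D}(\overline\Omega)$ and --- by the expansion \eqref{ch2H10-2} together with Remark \ref{R1} --- of $\bs{H}^{1,0}(\Omega;\bs{\mathcal A})$; combined with \eqref{ch2H10PiW} this shows that $(\mathring{\Pi}^d(\mu\bs{\tau}),\mathring{\bs W}(\mu\bs{\tau}))=(\Pi^d\bs{\tau},\mu\bs{W}\bs{\tau})$ lies in $\bs{H}^{1,0}(\Omega;\bs{\mathcal A})$. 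By Theorem \ref{densT} I can then choose a sequence $(\pi^m,\bs{w}^m)_m\subset\mathcal D(\overline\Omega)\times\bs{\mathcal D}(\overline\Omega)$ converging to $(\mathring{\Pi}^d(\mu\bs{\tau}),\mathring{\bs W}(\mu\bs{\tau}))$ in $\bs{H}^{1,0}(\Omega;\bs{\mathcal A})$, and by \eqref{ch2relationVW}, \eqref{ch2relationP} the pairs $(\pi^m,\frac1\mu\bs{w}^m)\subset\mathcal D(\overline\Omega)\times\bs{\mathcal D}(\overline\Omega)$ then converge to $(\Pi^d\bs{\tau},\bs{W}\bs{\tau})$ in the same space.

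The algebraic heart is the pointwise identity, valid for any smooth $(\pi,\bs{w})$,
\begin{equation*}
\sigma_{ik}\Big(\pi,\tfrac1\mu\bs{w}\Big)=\mathring{\sigma}_{ik}(\pi,\bs{w})
+\mu\Big(w_k\,\partial_i\tfrac1\mu+w_i\,\partial_k\tfrac1\mu-\tfrac23\delta_i^k\,w_j\,\partial_j\tfrac1\mu\Big),
\end{equation*}
which is just the Leibniz rule $\mu\,\partial_i(w_k/\mu)=\partial_i w_k+\mu\,w_k\,\partial_i(1/\mu)$ (and its analogues for $\partial_k w_i$ and $\div(\bs{w}/\mu)$) inserted into \eqref{sigmadef}, the pressure contribution being common to $\sigma$ and $\mathring{\sigma}$. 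Contracting with $n_i$, applying $\gamma^{\pm}$, and using that canonical and classical tractions agree on smooth pairs (cf.\ \eqref{ch2Tcl}), this yields
\begin{equation*}
T_k^{\pm}\Big(\pi^m,\tfrac1\mu\bs{w}^m\Big)=\mathring{T}_k^{\pm}(\pi^m,\bs{w}^m)
+\gamma^{\pm}\Big(\mu\big[\partial_i\tfrac1\mu\, w^m_k+\partial_k\tfrac1\mu\, w^m_i-\tfrac23\delta_i^k\,\partial_j\tfrac1\mu\, w^m_j\big]\Big)n_i.
\end{equation*}
Letting $m\to\infty$: the left side tends to $\mathcal L_k^{\pm}\bs{\tau}$ by continuity of the canonical traction $\bs{T}^{\pm}\colon\bs{H}^{1,0}(\Omega^{\pm};\bs{\mathcal A})\to\bs{H}^{-1/2}({\partial\Omega})$ recalled in the Preliminaries; the first term on the right tends to $\mathring{T}_k^{\pm}(\mathring{\Pi}^d(\mu\bs{\tau}),\mathring{\bs W}(\mu\bs{\tau}))=\mathring{\mathcal L}_k^{\pm}(\mu\bs{\tau})=\widehat{\mathcal L}_k\bs{\tau}$ by the same continuity applied to $\mathring{\bs T}^{\pm}$ and the definition \eqref{ch2relationL}; and since $\mu\,\partial_i(1/\mu)=-\partial_i\mu/\mu\in\mathcal C^{\infty}(\overline\Omega)$ is a bounded multiplier, the bracketed quantity converges in $\bs{H}^1(\Omega)$, so its $\gamma^{\pm}$-trace converges in $\bs{H}^{1/2}({\partial\Omega})\hookrightarrow\bs{H}^{-1/2}({\partial\Omega})$. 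Rearranging the limiting identity gives \eqref{ch2jumpL}.

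I expect the care to lie in the functional-analytic bookkeeping rather than in the algebra. First, since $\bs{T}^{\pm}$ is only a weak (canonical) conormal derivative, the whole scheme hinges on the continuity of $(\Pi^d,\bs{W})$ into $\bs{H}^{1,0}(\Omega^{\pm};\bs{\mathcal A})$ (Theorem \ref{ch2T3}) and of $\bs{T}^{\pm}$ out of it, so that the pointwise computation for the smooth approximants survives the passage to the limit. Second --- and this is where I expect the main obstacle --- the relation carries $\pm$ on both sides: the upper sign is handled on $\Omega=\Omega^{+}$ exactly as above, while for the lower sign one must repeat the argument on a bounded exterior subdomain $\Omega^{-}\cap B_R$ with $R$ large, invoking the exterior counterparts of Theorem \ref{densT} and of \eqref{ch2H10PiW} and the fact that the canonical traction on ${\partial\Omega}$ only depends on $(\Pi^d\bs{\tau},\bs{W}\bs{\tau})$ near ${\partial\Omega}$; since the displayed algebraic identity is purely local and insensitive to the side, the $(-)$-version then follows verbatim.
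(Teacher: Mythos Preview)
Your proposal is correct and follows essentially the same route as the paper: approximate $(\mathring{\Pi}^d(\mu\bs{\tau}),\mathring{\bs W}(\mu\bs{\tau}))$ by smooth pairs $(\pi^m,\bs{w}^m)$ via Theorem~\ref{densT}, compute the classical traction of $(\pi^m,\tfrac{1}{\mu}\bs{w}^m)$ via the Leibniz-type splitting $\sigma_{ik}(\pi,\tfrac{1}{\mu}\bs{w})=\mathring{\sigma}_{ik}(\pi,\bs{w})+\text{correction}$, and pass to the limit using continuity of the canonical tractions on $\bs{H}^{1,0}$. Your extra paragraph on the $(-)$-sign and the exterior domain is a worthwhile addition that the paper leaves implicit.
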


\section{The Third Green Identities}
{ 
\begin{theorem}\label{ch2th3gp}
For any $(p,\bs{v})\in \bs{H}^{1,0}(\Omega;  \boldsymbol{\mathcal{A}})$ the following third Green identities hold
\begin{align}\label{ch2representationp}
p+\mathcal{R}^{\bullet} \boldsymbol{v} - {\Pi^s}\boldsymbol{T}(p,\bs{v}) +{\Pi^d} \boldsymbol{\gamma}^{+}\boldsymbol{v}&=\mathring{\mathcal{Q}}\boldsymbol{\mathcal{A}}(p,\bs{v})
+\frac{4\mu}{3}\div\boldsymbol{v}\quad \text{in}\,\,\,\Omega,\\
\label{ch2vrepresentationA}
\boldsymbol{v}+\boldsymbol{\mathcal{R}}\boldsymbol{v}-\boldsymbol{V}\boldsymbol{T}^{+}(p,\bs{v})
+\boldsymbol{W}\boldsymbol{\gamma}^{+}\boldsymbol{v}
&=\boldsymbol{\mathcal{U}\mathcal{A}}(p,\bs{v}){-}\boldsymbol{\mathcal{Q}}\,\div\boldsymbol{v} \quad \text{in } \Omega.
\end{align}
\end{theorem}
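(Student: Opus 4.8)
The strategy is to derive these two identities first for the constant-coefficient Stokes operator $\mathring{\boldsymbol{\mathcal A}}$, where the fundamental-solution pair $(\mathring q^k,\mathring{\boldsymbol u}^k)$ is available, and then transfer the result to the variable-coefficient case by means of the algebraic relations \eqref{ch2relationU}--\eqref{ch2relationP} between the variable-coefficient and constant-coefficient potentials. By Theorem~\ref{densT} it suffices to prove the identities for $(p,\bs v)\in\mathcal D(\overline\Omega)\times\boldsymbol{\mathcal D}(\overline\Omega)$ and then pass to the limit, using the continuity of all the potential operators established in Theorems~\ref{ch2thmUR:theo}, \ref{ch2T3}, the trace theorem, and the continuity of $\boldsymbol T^{\pm}:\bs H^{1,0}(\Omega^{\pm};\boldsymbol{\mathcal A})\to\bs H^{-1/2}(\partial\Omega)$; every term on both sides of \eqref{ch2representationp}--\eqref{ch2vrepresentationA} is a composition of such continuous operators applied to $(p,\bs v)$, $\bs{\mathcal A}(p,\bs v)$, $\div\bs v$, $\boldsymbol\gamma^+\bs v$ or $\boldsymbol T^+(p,\bs v)$.

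\textbf{Step 1 (classical third Green identity for the parametrix).} For smooth $(p,\bs v)$ I would apply the second Green identity \eqref{ch2secondgreen} on the punctured domain $\Omega_\epsilon=\Omega\setminus\overline B_\epsilon(\bs y)$, taking $(q,\bs u)=(q^k(\cdot,\bs y),\bs u^k(\cdot,\bs y))$ the parametrix. Using \eqref{ch2param}, $\mathcal A_j(q^k,\bs u^k)=\delta_j^k\delta(\bs x-\bs y)+R_{kj}(\bs x,\bs y)$, and $\div_{\bs x}\bs u^k=\tfrac1{\mu(\bs y)}\div_{\bs x}\mathring{\bs u}^k=0$, the left-hand volume integral produces (as $\epsilon\to0$) the terms $v_k(\bs y)+\mathcal R_{kj}v_j(\bs y)$ together with $-\int_\Omega q^k\mathcal A_j(p,\bs v)\,dx - $ a pressure--divergence term, while the surface integral over $\partial B_\epsilon(\bs y)$ contributes the free term and the boundary integral over $\partial\Omega$ gives $-V_{kj}\boldsymbol T^+_j(p,\bs v)(\bs y)+W_{kj}(\boldsymbol\gamma^+v_j)(\bs y)$. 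Rewriting $q^k(\bs x,\bs y)=-q^k(\bs x,\bs y)$ via \eqref{Qrho1}--\eqref{Qrho2} and collecting the pressure contribution produces $-\boldsymbol{\mathcal Q}\div\bs v$ and the $\boldsymbol{\mathcal U}\bs{\mathcal A}(p,\bs v)$ term; this yields \eqref{ch2vrepresentationA}. The careful bookkeeping of the $\epsilon\to0$ limits in the strongly/weakly singular integrals (the remainder $\mathcal R$ and the v.p. integral in $\mathcal R^\bullet$) is where the relations \eqref{ch2Rb}--\eqref{ch2Rb-bl} and the integration-by-parts identity displayed there are used.

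\textbf{Step 2 (scalar pressure identity).} For \eqref{ch2representationp} I would take the divergence (in $\bs y$) of the just-proved identity \eqref{ch2vrepresentationA}, or, more cleanly, pair the Stokes system directly against $\mathring q^k$. Concretely, apply $\div_{\bs y}$ to \eqref{ch2vrepresentationA} and use \eqref{dQ}, namely $\div\mathring{\boldsymbol{\mathcal Q}}\rho=-\rho$, together with \eqref{ch2relationU}, \eqref{ch2relationVW}, \eqref{ch2relationP} and the relation \eqref{ch2relationRdot} for $\mathcal R^\bullet$; the divergence of $\bs V$, $\bs W$ and $\boldsymbol{\mathcal U}$ applied to the appropriate densities reproduces $\Pi^s\boldsymbol T(p,\bs v)$, $\Pi^d\boldsymbol\gamma^+\bs v$, $\mathring{\mathcal Q}\bs{\mathcal A}(p,\bs v)$, and the $\mathcal R^\bullet$ and $\tfrac{4\mu}{3}\div\bs v$ terms, after using $\div\bs v$ as the density where it appears. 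Alternatively one verifies \eqref{ch2representationp} by the same punctured-domain argument applied with the scalar test object $\mathring q^k$ and using \eqref{dF-q} that $\partial_{x_k}\mathring q^k=-\delta(\bs x-\bs y)$; I would present whichever is shorter, but the divergence route leverages the already-established identity and the pre-computed relations \eqref{4.15}, \eqref{QP}, \eqref{dQ}.

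\textbf{Main obstacle.} The technical heart is controlling the singular limits: the parametrix potentials $\boldsymbol V$, $\boldsymbol W$ have the same singularity orders as in the constant-coefficient theory, but the remainder term $R_{kj}=\mathcal O(|\bs x-\bs y|^{-2})$ and, more delicately, the strongly singular v.p. integral defining $\mathcal R^\bullet$ require the careful $\epsilon\to0$ analysis already carried out in the displayed multi-line computation after \eqref{ch2Rb-bl}; getting the constant $\tfrac43$ (hence the $\tfrac{4\mu}{3}\div\bs v$ term) exactly right hinges on that computation. Once the identities hold on $\mathcal D(\overline\Omega)\times\boldsymbol{\mathcal D}(\overline\Omega)$, the density Theorem~\ref{densT} and the continuity statements make the extension to $\bs H^{1,0}(\Omega;\boldsymbol{\mathcal A})$ routine.
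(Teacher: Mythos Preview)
Your outline for the velocity identity \eqref{ch2vrepresentationA} via the second Green identity on the punctured domain with the parametrix $(q^k,\bs u^k)$, followed by density via Theorem~\ref{densT}, is correct and is exactly what the paper does.

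For the pressure identity \eqref{ch2representationp}, however, your primary route is a genuine gap. Taking $\div_{\bs y}$ of \eqref{ch2vrepresentationA} cannot produce \eqref{ch2representationp}, because after multiplying by $\mu$ and using \eqref{ch2relationU}, \eqref{ch2relationQ2}, \eqref{ch2relationVW}, the constant-coefficient potentials $\mathring{\bs{\mathcal U}}$, $\mathring{\bs V}$, $\mathring{\bs W}$ are \emph{divergence-free}, so their divergence is zero, not $\mathring{\mathcal Q}$, $\Pi^s$, $\Pi^d$ as you claim. Concretely, $\div_{\bs y}(\mu\bs v+\mu\bs{\mathcal R}\bs v-\mathring{\bs V}\bs T^+ +\mathring{\bs W}(\mu\bs\gamma^+\bs v))=\div_{\bs y}(\mathring{\bs{\mathcal U}}\bs{\mathcal A}(p,\bs v)-\mathring{\bs{\mathcal Q}}(\mu\div\bs v))$ collapses, via \eqref{dQ} and the computation of $\div(\mu\bs{\mathcal R}\bs v)$ from \eqref{ch2relationR}, to the tautology $\mu\div\bs v=\mu\div\bs v$. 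The unknown $p$ never appears explicitly in \eqref{ch2vrepresentationA} (only inside $\bs T^+$, which sits under a divergence-free layer potential), so no differential operation on \eqref{ch2vrepresentationA} alone can recover it.

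Your alternative, pairing the PDE directly against $\mathring q^j(\cdot,\bs y)$ and using $\partial_{x_j}\mathring q^j=-\delta$ from \eqref{dF-q}, is the right approach and is precisely what the paper carries out: one writes $\langle\mathring q^j(\cdot,\bs y),\mathcal A_j(p,\bs v)\rangle_\Omega$, integrates by parts repeatedly, exploits the symmetry $\partial_{x_i}\mathring q^j=\partial_{x_j}\mathring q^i$, and identifies the resulting boundary and volume terms with $\Pi^s\bs T^+(p,\bs v)$, $\Pi^d\bs\gamma^+\bs v$, $\mathcal R^\bullet\bs v$ (via \eqref{ch2Rb-bl}) and the $\tfrac{4\mu}{3}\div\bs v$ contribution. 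So drop the divergence route and commit to this one.
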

} %
\begin{proof}
{ For an arbitrary fixed $\boldsymbol{y}\in\Omega$, let $B_\epsilon(\boldsymbol{y})\subset \Omega$ be a ball with a small enough radius $ \epsilon$ and centre $\boldsymbol{y}\in \Omega$, and let $\Omega_\epsilon(\boldsymbol{y})=\Omega\setminus \bar B_\epsilon(\boldsymbol{y})$. 
Let first 
$(p,\bs{v})\in\mathcal D(\overline{\Omega})\times \boldsymbol{\mathcal D}(\overline{\Omega})\subset  \bs{H}^{1,0}(\Omega;  \boldsymbol{\mathcal{A}})$. 

(i) Let us start from the velocity identity \eqref{ch2vrepresentationA}. 
For the parametrix, evidently, we have the inclusion $(q^{k},\bs{u}^{k})\in \bs{H}^{1,0}(\Omega_\epsilon(\boldsymbol{y});\bs{\mathcal A})$. 
Therefore, we can apply the second Green identity \eqref{ch2secondgreen} in the domain $\Omega_\epsilon(\boldsymbol{y})$ to  $(p,\bs{v})$ and to  $(q^{k},\boldsymbol{u}^{k})$ to obtain
\begin{multline}\label{4.G3tile}
\int_{\partial B_\epsilon(\boldsymbol{y})}\gamma^+\boldsymbol{u}^k(\boldsymbol{x},\boldsymbol{u})\cdot
\boldsymbol{T}^{+}(p(\boldsymbol{x}),\boldsymbol{v}(\boldsymbol{x}))  dS(\boldsymbol{x}) 
- \int_{\partial B_\epsilon(\boldsymbol{y})} \boldsymbol{T}^{+}_x (q^k(\boldsymbol{x},\boldsymbol{y}),\boldsymbol{u}^k(\boldsymbol{x},\boldsymbol{y}))\cdot
 \gamma^+\boldsymbol{v}(\boldsymbol{x}) dS(\boldsymbol{x}) 
\\
+\int_{\partial \Omega} \gamma^+\boldsymbol{u}^k(\boldsymbol{x},\boldsymbol{u})\cdot
\boldsymbol{T}^{+}(p(\boldsymbol{x}),\boldsymbol{v}(\boldsymbol{x}))  dS(\boldsymbol{x})  
- \int_{\partial \Omega} \boldsymbol{T}^{+}_x (q^k(\boldsymbol{x},\boldsymbol{y}),\boldsymbol{u}^k(\boldsymbol{x},\boldsymbol{y}))\cdot
 \gamma^+\boldsymbol{v}(\boldsymbol{x}) dS(\boldsymbol{x}) 
\\
+\int_{\Omega_\epsilon(\boldsymbol{y})}\,R_{kj}(\boldsymbol{x},\boldsymbol{y}) {v}_j(\boldsymbol{x})d\boldsymbol{x}
- \int_{\Omega_\epsilon(\boldsymbol{y})}\,q^k(\boldsymbol{x},\boldsymbol{y})\div\boldsymbol{v}(\boldsymbol{x})d\boldsymbol{x} 
 =  
 \int_{\Omega_\epsilon(\boldsymbol{y})}\bs{\mathcal A}( p, \boldsymbol{v})\cdot \boldsymbol{u}^k(\boldsymbol{x},\boldsymbol{y}) \,d\boldsymbol{x}.
 \end{multline}
Since all the functions in \eqref{4.G3tile} are smooth, the canonical conormal derivatives coincide with the classical ones, given by \eqref{ch2Tcl}, and it is easy to show that when $\epsilon\to 0$, the first integral in \eqref{4.G3tile} tends to 0, the second tends to $-v_k(\boldsymbol{y})$, while integrands in the remaining domain integrals are weakly singular and these integrals tend to the corresponding improper integrals, which leads us to \eqref{ch2vrepresentationA} for  
$(p,\bs{v})\in\mathcal D(\overline{\Omega})\times \boldsymbol{\mathcal D}(\overline{\Omega})\subset  \bs{H}^{1,0}(\Omega;  \boldsymbol{\mathcal{A}})$. 

(ii) Let us now prove the pressure identity \eqref{ch2representationp} 
for $(p,\bs{v})\in\mathcal D(\overline{\Omega})\times \boldsymbol{\mathcal D}(\overline{\Omega})$.
One can do this using the second Green identity similar to  \eqref{4.G3tile} but we will employ a slightly different approach.
Multiplying equation \eqref{ch2operatorA} by the fundamental pressure vector $\mathring{q}^j(\boldsymbol{x},\boldsymbol{y})$, integrating over the domain $\Omega$ and writing it as the bilinear form, which will be then treated in the sense of distributions, we obtain 
\begin{equation}\label{ch2gip1}
\left\langle\mathring{q}^j(\cdot,\boldsymbol{y}),\partial_{i}\left[\mu
(\partial_i v_{j} + \partial_j v_{i}
-\frac{2}{3}\delta_{i}^{j}\div\boldsymbol{v})\right]\right\rangle_{\Omega} 
-\left\langle \mathring{q}^j(\cdot,\boldsymbol{y}),\partial_j p\right\rangle_{\Omega}  = \left\langle\mathring{q}^j(\cdot,\boldsymbol{y}),\mathcal{A}_{j}(p, \bs{v})\right\rangle_{\Omega}.
\end{equation}
Applying the first Green identity to the first term{, we have,} 
\begin{align}\label{ch2gip2}
&\left< \mathring{q}^j(\cdot,\boldsymbol{y})\, , 
\, \partial_i\left(\mu\left(\partial_i v_{j} + \partial_j v_{i}
-\frac{2}{3}\delta_{i}^{j}\div\boldsymbol{v}\right)\right)\right>_{\Omega}  = \nonumber \\
&- \left< \partial_i \mathring{q}^j(\cdot,\boldsymbol{y})\, , 
\,\mu\left(\partial_i v_{j} + \partial_j v_{i}
-\frac{2}{3}\delta_{i}^{j}\div\boldsymbol{v}\right) \right>_{\Omega} \nonumber \\
& + \left<\mathring{q}^j(\cdot,\boldsymbol{y}) \,,
\,\mu\left(\partial_i v_{j} + \partial_j v_{i}
-\frac{2}{3}\delta_{i}^{j}\div\boldsymbol{v}\right) n_{j}\right>_{\partial\Omega},
\end{align}
and also in the second term 
\begin{align}\label{ch2gip3}
\left\langle \mathring{q}^j(\cdot,\boldsymbol{y}),\partial_j p\right\rangle_{\Omega} 
= - \left\langle \partial_j \mathring{q}^j(\cdot,\boldsymbol{y}),p\right\rangle_{\Omega}
+ \left<\mathring{q}^j(\cdot,\boldsymbol{y})\, , \, p\, n_{j}\right>_{\partial\Omega}
= { p(\boldsymbol{y})}
+ \left<\mathring{q}^j(\cdot,\boldsymbol{y})\, , \, p\, n_{j}\right>_{\partial\Omega},
\end{align}
where we took into account that by \eqref{dF-q} we have
\begin{equation}\label{ch2gip4bis}
\left<
\partial_j \mathring{q}^j(\cdot,\boldsymbol{y})\,,\, p\right>_{\Omega} =  -p(\boldsymbol{y}). 
\end{equation}
Substituting \eqref{ch2gip2} and \eqref{ch2gip3} into \eqref{ch2gip1} and rearranging terms we get
\begin{align}\label{ch2gip4}
\left<\mathring{q}^j(\cdot,\boldsymbol{y})\, , \,\mathcal{A}_{j}(p, \bs{v})\right>_{\Omega}
=&- \left< \partial_i \mathring{q}^j(\cdot,\boldsymbol{y})\, ,\,\mu\left(\partial_i v_{j} + \partial_j v_{i}
-\frac{2}{3}\delta_{i}^{j}\div\boldsymbol{v}\right) \right>_{\Omega} \nonumber \\
& + \left<\mathring{q}^j(\cdot,\boldsymbol{y}) \,, \,\mu\left(\partial_i v_{j} + \partial_j v_{i}
-\frac{2}{3}\delta_{i}^{j}\div\boldsymbol{v}\right) n_{j}\right>_{\partial\Omega}\nonumber\\
&+ \left\langle \partial_j \mathring{q}^j(\cdot,\boldsymbol{y}),p\right\rangle_{\Omega}
- \left<\mathring{q}^j(\cdot,\boldsymbol{y})\, , \, p\, n_{j}\right>_{\partial\Omega}.
\end{align}
By \eqref{ch2Tcl} we obtain
\begin{equation}
\left<\mathring{q}^j(\cdot,\boldsymbol{y}) \,, \,\mu\left(\partial_i v_{j} + \partial_j v_{i}
-\frac{2}{3}\delta_{i}^{j}\div\boldsymbol{v}\right) n_{j}\right>_{\partial\Omega} - \left<\mathring{q}^j(\cdot,\boldsymbol{y})\, , \, p\, n_{j}\right>_{\partial\Omega}=
 \left< \mathring{q}^j(\cdot,\boldsymbol{y})\, , \,T^{c+}_{j}(p, \bs{v})\right>_{\partial\Omega}. \label{ch2gip4uni}
\end{equation}
Let us now simplify the first term in the right hand side of \eqref{ch2gip4}
using the symmetry 
$\partial_{ x_{i}} \mathring{q}^j(\boldsymbol{x},\boldsymbol{y})
= \partial_{ x_{j}} \mathring{q}^{i}(\boldsymbol{x},\boldsymbol{y})$ 
and \eqref{dF-q}.
Then,  
\begin{align}
&\left< \partial_i \mathring{q}^j(\cdot,\boldsymbol{y})\, ,\,\mu\left(\partial_i v_{j} + \partial_j v_{i}
-\frac{2}{3}\delta_{i}^{j}\div\boldsymbol{v}\right) \right>_{\Omega} = 
2 \left< \partial_i \mathring{q}^j(\cdot,\boldsymbol{y})\, , \, \mu\partial_j v_{i}\right>_{\Omega} +\frac{2}{3}\mu(\boldsymbol{y})\div\boldsymbol{v}(\boldsymbol{y}).\label{ch2gip6}
\end{align}
Applying again the first Green identity to the first term in the right hand side of \eqref{ch2gip6}, we obtain 
\begin{align}
&\left< \partial_i \mathring{q}^j(\cdot,\boldsymbol{y})\, , \, \mu\partial_j v_i\right>_\Omega
=  \left< \partial_i \mathring{q}^j(\cdot,\boldsymbol{y})\, , \, \mu n_j\gamma^+v_i\right>_{\partial\Omega}  
-\left< \partial_i \mathring{q}^j(\cdot,\boldsymbol{y})\, , \, v_i\partial_j\mu \right>_{\Omega}   
 - \left< \partial_i\partial_j\mathring{q}^j(\cdot,\boldsymbol{y})\, , \, v_i\mu \right>_{\Omega}\nonumber \\
 &\qquad=  \left< \partial_i \mathring{q}^j(\cdot,\boldsymbol{y})\, , \, \mu n_j\gamma^+v_i\right>_{\partial\Omega}  
 -\left< \partial_i \mathring{q}^j(\cdot,\boldsymbol{y})\, , \, v_i\partial_j\mu \right>_{\Omega}   
  -v_i(\boldsymbol{y})\partial_i\mu(\boldsymbol{y}) 
  -\mu(\boldsymbol{y})\div\boldsymbol{v}(\boldsymbol{y}).\label{ch2gip9}
\end{align}
Now, plug \eqref{ch2gip9} into \eqref{ch2gip6}, 
\begin{multline}
\left< \partial_i \mathring{q}^j(\cdot,\boldsymbol{y})\, ,\,\mu\left(\partial_i v_{j} + \partial_j v_{i}
-\frac{2}{3}\delta_{i}^{j}\div\boldsymbol{v}\right) \right>_{\Omega} = 
2 \left< \partial_i \mathring{q}^j(\cdot,\boldsymbol{y})\, , \, \mu n_j\gamma^+v_i\right>_{\partial\Omega} \\ 
 -2\left< \partial_i \mathring{q}^j(\cdot,\boldsymbol{y})\, , \, v_i\partial_j\mu \right>_{\Omega}   
  -2v_i(\boldsymbol{y})\partial_i\mu(\boldsymbol{y}) 
  -\frac{4}{3}\mu(\boldsymbol{y})\div\boldsymbol{v}(\boldsymbol{y}).\label{ch2gip10}
\end{multline}
Now, substitute \eqref{ch2gip10}, \eqref{ch2gip4bis} and \eqref{ch2gip4uni} into \eqref{ch2gip4}. As a result, we obtain 
\begin{multline}
\left<\mathring{q}^j(\cdot,\boldsymbol{y})\, , \, \mathcal{A}_{j}(p, \bs{v})\,\right>_{\Omega}
=-2 \left< n_j\partial_j \mathring{q}^i(\cdot,\boldsymbol{y})\, , \, \mu \gamma^+v_i\right>_{\partial\Omega}  
 +2\left< \partial_i \mathring{q}^j(\cdot,\boldsymbol{y})\, , \, v_i\partial_j\mu \right>_{\Omega} \\  
  {+2v_i(\boldsymbol{y})\partial_i\mu(\boldsymbol{y}) 
  +\frac{4}{3}\mu(\boldsymbol{y})\div\boldsymbol{v}(\boldsymbol{y})
-p(\boldsymbol{y}) }
+ \langle \mathring{q}^j(\cdot,\boldsymbol{y})\, , \, T^{c+}_{j}(p, \bs{v})\rangle_{\partial\Omega}.
\end{multline}}
Rearranging the terms, taking into account that $T^{c+}_{j}(p, \bs{v})=T_{j}(p, \bs{v})$, and using the potential operator notations, we obtain \eqref{ch2representationp} for $(p,\bs{v})\in\mathcal D(\overline{\Omega})\times \boldsymbol{\mathcal D}(\overline{\Omega})$. 

Finally, if $(p,\bs{v})\in \bs{H}^{1,0}(\Omega;  \boldsymbol{\mathcal{A}})$ then by Theorem~\ref{densT}, the density argument and the mapping properties of the operators involved in \eqref{ch2representationp} and \eqref{ch2vrepresentationA}  extend these relations to any  $(p,\bs{v})\in \bs{H}^{1,0}(\Omega;  \boldsymbol{\mathcal{A}})$.
\end{proof}

If the couple $(  p,\bs{v})\in \bs{H}^{1,0}(\Omega;  \boldsymbol{\mathcal{A}})$ is a solution of the Stokes PDEs \eqref{ch2BVP1}-\eqref{ch2BVPdiv} with variable { viscosity} coefficient, then { the third Green identities \eqref{ch2representationp} and \eqref{ch2vrepresentationA}  reduce to}
\begin{align}\label{ch2GP}
&p+\mathcal{R}^{\bullet} \boldsymbol{v} - {\Pi^s}\boldsymbol{T}(p,\bs{v}) +{\Pi^d}\boldsymbol{\gamma}^{+}\boldsymbol{v}=\mathring{\mathcal{Q}}\boldsymbol{f}+\frac{4\mu}{3}g \quad \text{in}\quad \Omega,\\
\label{ch2GV}
&\boldsymbol{v}+\boldsymbol{\mathcal{R}}\boldsymbol{v}-\boldsymbol{V}\boldsymbol{T}^{+}(p,\bs{v})+\boldsymbol{W}\boldsymbol{\gamma}^{+}\boldsymbol{v}=\boldsymbol{\boldsymbol{\mathcal{U}f}} {-}\boldsymbol{\mathcal{Q}}g \quad\ \text{in}\quad \Omega.
\end{align}
We will also need the following trace and traction of the third Green identities for 
$(p,\bs{v})\in \bs{H}^{1,0}(\Omega;  \boldsymbol{\mathcal{A}})$  { on $\partial\Omega$},  
\begin{align}\label{ch2GG}
\frac{1}{2}\boldsymbol{\gamma}^{+}\boldsymbol{v}+\gamma^{+}\boldsymbol{\mathcal{R}}\boldsymbol{v}-
\boldsymbol{\mathcal{V}}\boldsymbol{T}^{+}(p,\bs{v})+\boldsymbol{\mathcal{W}}\gamma^{+}\boldsymbol{v}&=
\boldsymbol{\gamma}^{+}\boldsymbol{\mathcal{U}}\boldsymbol{f} {-}\boldsymbol{\gamma}^{+}\boldsymbol{\mathcal{Q}}g,
\\
\label{ch2GT}
\frac{1}{2}\boldsymbol{T}^{+}(p,\bs{v})+
\boldsymbol{T}^{+}(\mathcal{R}^{\bullet}, \boldsymbol{\mathcal{R}})\boldsymbol{v}
-\boldsymbol{\mathcal{W'}}\boldsymbol{T}^{+}(p,\bs{v})+\boldsymbol{\mathcal{L}}^{+}\gamma^{+}\boldsymbol{v}
&=\boldsymbol{\boldsymbol{T}^{+}}(\mathring{\mathcal{Q}}\boldsymbol{f}+\frac{4\mu}{3}g,\,\,\boldsymbol{\boldsymbol{\mathcal{U}f}}{-}\boldsymbol{\mathcal{Q}}g).
\end{align}
{ Note that the traction operators in \eqref{ch2GT} are well defined by virtue of the continuity of operators \eqref{ch2H10QU}-\eqref{ch2H10RR} in Theorem~\ref{ch2thmUR:theo} and operators \eqref{ch2H10PiW} in Theorem~\ref{ch2T3}. }

{ Let us now} prove the following three assertions that are instrumental for proving the equivalence of the { BDIE systems to the mixed BVP}.

\begin{theorem}\label{ch2L1}
Let $\boldsymbol{v}\in \boldsymbol{H}^{1}(\Omega)$, $p\in L_{2}(\Omega)$, $g\in L_{2}(\Omega)$, $\boldsymbol{f}\in \boldsymbol{L}_{2}(\Omega),$ $\boldsymbol{\boldsymbol{\Psi}}\in \boldsymbol{H}^{-1/2}({\partial\Omega})$ and $\boldsymbol{\boldsymbol{\Phi}}\in \boldsymbol{H}^{1/2}({\partial\Omega})$ satisfy the equations
\begin{align}
p+\mathcal{R}^{\bullet} \boldsymbol{v} - {\Pi^s}\boldsymbol{\Psi} +{\Pi^d}\boldsymbol{\Phi}&=\mathring{\mathcal{Q}}\boldsymbol{f}+\frac{4\mu}{3}g \hspace{0.5em}in\,\, \Omega,\label{ch2lemap}\\
\boldsymbol{v}+\boldsymbol{\mathcal{R}v}-\boldsymbol{V\Psi}+\boldsymbol{W\Phi}&=\boldsymbol{\boldsymbol{\mathcal{U}f}}{-}\boldsymbol{\mathcal{Q}}g \hspace{0.5em}in\,\, \Omega.\label{ch2lemav}
\end{align}
Then $(  p,\bs{v})\in \bs{H}^{1,0}(\Omega;\boldsymbol{\mathcal{A}})$ and solve the equations 
\begin{align}\label{PDEsol}
\boldsymbol{\mathcal{A}}( p, \boldsymbol{v})=\boldsymbol{f},\quad \div\bs{v}=g.
\end{align} 
Moreover, the following relations hold true:
\begin{align}
{\Pi^s}(\boldsymbol{\boldsymbol{\Psi}} - \boldsymbol{T}^{+}(p,\bs{v})) - {\Pi^d}(\boldsymbol{\boldsymbol{\Phi}} - \gamma^{+}\boldsymbol{v})&= 0\hspace{1em}\text{in}\,\,\Omega,\label{ch2lemarel2}\\
\boldsymbol{V}(\boldsymbol{\Psi} - \boldsymbol{T}^{+}(p,\bs{v})) - \boldsymbol{W}(\boldsymbol{\boldsymbol{\Phi}} - \gamma^{+}\boldsymbol{v}) &= \boldsymbol{0}\hspace{1em}\text{in}\,\,\Omega\label{ch2lemare1}.
\end{align}
\end{theorem}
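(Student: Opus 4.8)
The plan is to prove the statement in two stages: first upgrade the regularity of $(p,\bs{v})$ to $\bs{H}^{1,0}(\Omega;\bs{\mathcal{A}})$, and then use the third Green identities of Theorem~\ref{ch2th3gp} to read off the PDEs \eqref{PDEsol} and the two remaining relations.

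For the regularity, I would solve \eqref{ch2lemap} for $p$ and \eqref{ch2lemav} for $\bs{v}$ and assemble the outcome into one pressure--velocity pair,
\[
(p,\bs{v})=(\mathring{\mathcal{Q}}\bs{f},\bs{\mathcal{U}}\bs{f})+\big(\tfrac{4\mu}{3}g,-\bs{\mathcal{Q}}g\big)-(\mathcal{R}^{\bullet}\bs{v},\bs{\mathcal{R}}\bs{v})+(\Pi^s\bs{\Psi},\bs{V}\bs{\Psi})-(\Pi^d\bs{\Phi},\bs{W}\bs{\Phi}).
\]
Each summand lies in $\bs{H}^{1,0}(\Omega;\bs{\mathcal{A}})$: the first by \eqref{ch2H10QU} with $s=0$, the second by \eqref{ch2H10IQ} with $s=1$, the third by \eqref{ch2H10RR} with $s=1$ (the first and third even in $\bs{H}^{2,0}(\Omega;\bs{\mathcal{A}})\subset\bs{H}^{1,0}(\Omega;\bs{\mathcal{A}})$), and the last two by \eqref{ch2H10PiW}. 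Hence $(p,\bs{v})\in\bs{H}^{1,0}(\Omega;\bs{\mathcal{A}})$, and in particular its canonical traction $\bs{T}^{+}(p,\bs{v})\in\bs{H}^{-1/2}({\partial\Omega})$ is well defined.

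Now the third Green identities \eqref{ch2representationp}, \eqref{ch2vrepresentationA} apply to $(p,\bs{v})$. Subtracting them from \eqref{ch2lemap}, \eqref{ch2lemav} and abbreviating $\bs{f}^{*}:=\bs{f}-\bs{\mathcal{A}}(p,\bs{v})$, $g^{*}:=g-\div\bs{v}$, $\bs{\Psi}^{*}:=\bs{\Psi}-\bs{T}^{+}(p,\bs{v})$, $\bs{\Phi}^{*}:=\bs{\Phi}-\bs{\gamma}^{+}\bs{v}$, everything reduces to
\[
-\Pi^s\bs{\Psi}^{*}+\Pi^d\bs{\Phi}^{*}=\mathring{\mathcal{Q}}\bs{f}^{*}+\tfrac{4\mu}{3}g^{*},\qquad
-\bs{V}\bs{\Psi}^{*}+\bs{W}\bs{\Phi}^{*}=\bs{\mathcal{U}}\bs{f}^{*}-\bs{\mathcal{Q}}g^{*}\qquad\text{in }\Omega .
\]
To eliminate $g^{*}$ I would take $\div$ of the second relation: by \eqref{ch2relationU}, \eqref{ch2relationQ2}, \eqref{ch2relationVW} its left-hand side equals $\div(\tfrac{1}{\mu}\bs{G})$ with $\bs{G}:=\mu(-\bs{V}\bs{\Psi}^{*}+\bs{W}\bs{\Phi}^{*}-\bs{\mathcal{U}}\bs{f}^{*}+\bs{\mathcal{Q}}g^{*})=-\mathring{\bs{V}}\bs{\Psi}^{*}+\mathring{\bs{W}}(\mu\bs{\Phi}^{*})-\mathring{\bs{\mathcal{U}}}\bs{f}^{*}+\mathring{\bs{\mathcal{Q}}}(\mu g^{*})$; the relation itself forces $\bs{G}=\bs{0}$, while the divergence-free property of $\mathring{\bs{V}},\mathring{\bs{W}},\mathring{\bs{\mathcal{U}}}$ together with $\div\mathring{\bs{\mathcal{Q}}}\rho=-\rho$ from \eqref{dQ} give $\div\bs{G}=-\mu g^{*}$; hence $0=\tfrac{1}{\mu}\div\bs{G}+\nabla(\tfrac{1}{\mu})\cdot\bs{G}=-g^{*}$, so $\div\bs{v}=g$. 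With $g^{*}=0$ in hand I would then multiply the reduced velocity relation by $\mu$ and combine it with the pressure relation to produce, via \eqref{ch2relationU}, \eqref{ch2relationVW}, \eqref{ch2relationP}, a \emph{constant-coefficient} pressure--velocity pair $\big(-\mathring{\Pi}^s\bs{\Psi}^{*}+\mathring{\Pi}^d(\mu\bs{\Phi}^{*})-\mathring{\mathcal{Q}}\bs{f}^{*},\,-\mathring{\bs{V}}\bs{\Psi}^{*}+\mathring{\bs{W}}(\mu\bs{\Phi}^{*})-\mathring{\bs{\mathcal{U}}}\bs{f}^{*}\big)$ that vanishes in $\Omega$; applying $\mathring{\bs{\mathcal{A}}}$ and using the homogeneity of the single/double layer pairs \eqref{A0V0} (with densities $\bs{\Psi}^{*}$ and $\mu\bs{\Phi}^{*}$) and the identity \eqref{A0U0} (with density $\bs{f}^{*}$) leaves exactly $\bs{0}=-\bs{f}^{*}$, so $\bs{\mathcal{A}}(p,\bs{v})=\bs{f}$. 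Finally, inserting $\bs{f}^{*}=\bs{0}$ and $g^{*}=0$ into the two starred relations collapses them to $\Pi^s\bs{\Psi}^{*}-\Pi^d\bs{\Phi}^{*}=0$ and $\bs{V}\bs{\Psi}^{*}-\bs{W}\bs{\Phi}^{*}=\bs{0}$, which are precisely \eqref{ch2lemarel2} and \eqref{ch2lemare1}.

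I expect the main obstacle to be getting the order of the two ``purification'' steps right rather than any hard estimate: one must take the \emph{divergence} of the velocity relation first (not the Stokes operator), because then the product-rule cross term $\nabla(\tfrac{1}{\mu})\cdot\bs{G}$ is annihilated by that very relation and only $-g^{*}$ survives; only afterwards, with $g^{*}=0$, may one rescale by $\mu$ and pass to the \emph{constant-coefficient} operator $\mathring{\bs{\mathcal{A}}}$, where the cancellations \eqref{A0V0}--\eqref{A0U0} are available. The remaining care is bookkeeping --- checking $\bs{f}^{*}\in\bs{L}_2(\Omega)$, $g^{*}\in L_2(\Omega)$, $\bs{\Psi}^{*}\in\bs{H}^{-1/2}({\partial\Omega})$, $\bs{\Phi}^{*}\in\bs{H}^{1/2}({\partial\Omega})$ so that all potentials and the applications of $\div$ and $\mathring{\bs{\mathcal{A}}}$ are legitimate, and noting that by Remark~\ref{R1} one has $\bs{H}^{1,0}(\Omega;\bs{\mathcal{A}})=\bs{H}^{1,0}(\Omega;\mathring{\bs{\mathcal{A}}})$ so the constant-coefficient step stays in the correct space.
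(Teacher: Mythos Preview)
Your proposal is correct and follows essentially the same route as the paper: upgrade $(p,\bs{v})$ to $\bs{H}^{1,0}(\Omega;\bs{\mathcal A})$ via the mapping properties \eqref{ch2H10QU}--\eqref{ch2H10RR}, \eqref{ch2H10PiW}, subtract the third Green identities, multiply the velocity relation by $\mu$ to pass to the constant-coefficient potentials, take the divergence (using \eqref{dQ}) to kill $g^{*}$, then apply $\mathring{\bs{\mathcal A}}$ (using \eqref{A0V0}--\eqref{A0U0}) to kill $\bs{f}^{*}$. The only cosmetic difference is that the paper goes directly from $\bs G=\bs 0$ to $\div\bs G=-\mu g^{*}=0$, whereas you detour through the product rule for $\div(\tfrac{1}{\mu}\bs G)$; the latter is unnecessary since $\bs G=\bs 0$ already makes the cross term vanish.
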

\begin{proof}
 First, the embedding $(p,\bs{v})\in \bs{H}^{1,0}(\Omega;\boldsymbol{\mathcal{A}})$ is implied by continuity of operators \eqref{ch2OpC8b}-\eqref{ch2H10RR} in Theorem \ref{ch2thmUR:theo} and operators in \eqref{ch2H10PiW} in Theorem \ref{ch2T3}. 
Hence the third Green identities \eqref{ch2representationp} and \eqref{ch2vrepresentationA}
hold true. 
Subtracting from them equations \eqref{ch2lemap} and \eqref{ch2lemav} respectively we obtain
\begin{align}
{\Pi^d}\boldsymbol{\Phi}^{*}-{\Pi^s}\boldsymbol{\Psi}^{*}
&=\mathring{\mathcal{Q}}(\boldsymbol{\mathcal{A}}(p,\bs{v})-\boldsymbol{f})
+\frac{4\mu}{3}(\div\boldsymbol{v}-g),\label{ch2c118}\\
\boldsymbol{W}\boldsymbol{\boldsymbol{\Phi}}^{*}-\boldsymbol{V}\boldsymbol{\boldsymbol{\Psi}}^{*}
&=\boldsymbol{\mathcal{U}}(\boldsymbol{\mathcal{A}}(p,\bs{v})-\boldsymbol{f})
{-}\boldsymbol{\mathcal{Q}}(\div\boldsymbol{v}-g).\label{ch2c117}
\end{align}
where $\boldsymbol{\Psi}^{*}:=\boldsymbol{T}^{+}(p,\bs{v})-\boldsymbol{\Psi}$, and $\boldsymbol{\boldsymbol{\Phi}}^{*}=\boldsymbol{\gamma}^{+}\boldsymbol{v}-\boldsymbol{\boldsymbol{\Phi}}$. 


After multiplying \eqref{ch2c117} by $\mu$ and applying  relations 
\eqref{ch2relationU} and \eqref{ch2relationVW}, 
we arrive at
\begin{align}
\mathring{W}(\mu\boldsymbol{\boldsymbol{\Phi}}^{*})-\mathring{V}\boldsymbol{\boldsymbol{\Psi}}^{*}
&=\mathring{\boldsymbol{\mathcal{U}}}(\boldsymbol{\mathcal{A}}(p,\bs{v})-\boldsymbol{f})
{-}\mathring{\boldsymbol{\mathcal{Q}}}(\mu(\div\boldsymbol{v}-g)).
\label{ch2c117m}
\end{align}
Applying the divergence operator to both sides of \eqref{ch2c117m} and taking into account that the potentials $\mathring{\boldsymbol{\mathcal{U}}}, \mathring{\boldsymbol{V}}$, 
  and $\mathring{\boldsymbol{W}}$ are divergence free, while for $\mathring{\boldsymbol{\mathcal{Q}}}$ we have  equation \eqref{dQ}, we obtain
\begin{align}
0  &= {-}\div\mathring{\boldsymbol{\mathcal{Q}}}(\mu(\div\boldsymbol{v}-g))=\mu(\div\boldsymbol{v}-g),
 \end{align}
which implies the second equation in \eqref{PDEsol}.
Then equations \eqref{ch2c118} and \eqref{ch2c117m} reduce to
\begin{align*}
\mathring{{\Pi^d}}(\mu\boldsymbol{\boldsymbol{\Phi}}^{*})- \mathring{\Pi}^s\boldsymbol{\boldsymbol{\Psi}}^{*}&= \mathring{\mathcal{Q}}(\boldsymbol{\mathcal{A}}(p,\bs{v})-\boldsymbol{f}),\\
\mathring{W}(\mu\boldsymbol{\boldsymbol{\Phi}}^{*})-\mathring{V}\boldsymbol{\boldsymbol{\Psi}}^{*}&=\mathring{\boldsymbol{\mathcal{U}}}(\boldsymbol{\mathcal{A}}(p,\bs{v})-\boldsymbol{f}).
\end{align*}
Applying the Stokes operator with $\mu=1$ to these two equations, by \eqref{A0V0} and \eqref{A0U0} we obtain $\boldsymbol{\mathcal{A}}(p,\bs{v})-\boldsymbol{f}=\bs{0}$
and hence, the first equation in \eqref{PDEsol}. 

Finally, relations \eqref{ch2lemare1} and \eqref{ch2lemarel2} follow from the substitution of \eqref{PDEsol}
in \eqref{ch2c118} and \eqref{ch2c117}.  
\end{proof}

\begin{lemma}\label{ch2lemma2}
Let ${\partial\Omega}=\overline{S}_{1}\cup \overline{S}_{2}$, where $S_{1}$ and $S_{2}$ are open  non-empty non-intersecting simply connected submanifolds of ${\partial\Omega}$ with infinitely smooth boundaries. Let $\boldsymbol{\boldsymbol{\Psi}}^{*}\in \widetilde{\boldsymbol{H}}^{-1/2}(S_{1})$, $\boldsymbol{\Phi}^{*}\in \widetilde{\boldsymbol{H}}^{1/2}(S_{2})$. If 
\begin{equation}\label{ch2lema2iii}
{\Pi^s}(\boldsymbol{\Psi}^{*}) -{\Pi^d}(\boldsymbol{\Phi}^{*}) = {0},\quad \quad\boldsymbol{V}\boldsymbol{\Psi}^{*}(\boldsymbol{x})-\boldsymbol{W}\boldsymbol{\Phi}^{*}(\boldsymbol{x})=\boldsymbol{0}
\quad \text{in } \Omega,
\end{equation}
then $\boldsymbol{\boldsymbol{\Psi}}^{*} = \boldsymbol{0}$, and $\boldsymbol{\boldsymbol{\Phi}}^{*} = \boldsymbol{0}$, on ${\partial\Omega}$. 
\end{lemma}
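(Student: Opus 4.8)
The plan is to use the relations \eqref{ch2lema2iii} together with the previously established facts about the constant-coefficient potentials. First I would rewrite the hypotheses in terms of the $\mu=1$ operators: by \eqref{ch2relationVW} and \eqref{ch2relationP}, the equations \eqref{ch2lema2iii} become $\mathring\Pi^s\boldsymbol\Psi^* - \mathring\Pi^d(\mu\boldsymbol\Phi^*)=0$ and $\mathring{\boldsymbol V}\boldsymbol\Psi^* - \mathring{\boldsymbol W}(\mu\boldsymbol\Phi^*)=\boldsymbol 0$ in $\Omega$. Setting $\boldsymbol\Phi^\star:=\mu\boldsymbol\Phi^*$ (note $\mu\in\mathcal C^\infty$ and bounded below, so $\boldsymbol\Phi^\star\in\widetilde{\boldsymbol H}^{1/2}(S_2)$ iff $\boldsymbol\Phi^*$ is), the pair $(\mathring\Pi^s\boldsymbol\Psi^*-\mathring\Pi^d\boldsymbol\Phi^\star,\ \mathring{\boldsymbol V}\boldsymbol\Psi^*-\mathring{\boldsymbol W}\boldsymbol\Phi^\star)$ vanishes in $\Omega^+$. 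By \eqref{A0V0} this pair solves the homogeneous constant-coefficient Stokes system also in $\Omega^-$; I would then extend the vanishing to $\Omega^-$.

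The key step is the following: denote $\boldsymbol U^\star:=\mathring{\boldsymbol V}\boldsymbol\Psi^*-\mathring{\boldsymbol W}\boldsymbol\Phi^\star$ and $P^\star:=\mathring\Pi^s\boldsymbol\Psi^*-\mathring\Pi^d\boldsymbol\Phi^\star$. Since $\boldsymbol U^\star\equiv\boldsymbol 0$ in $\Omega^+$ and $P^\star\equiv 0$ in $\Omega^+$, all interior traces and tractions vanish: $\gamma^+\boldsymbol U^\star=\boldsymbol 0$, $\mathring{\boldsymbol T}^+(P^\star,\boldsymbol U^\star)=\boldsymbol 0$. Using the jump relations for the constant-coefficient single and double layer potentials (Theorem~\ref{ch2jumps} with $\mu=1$, i.e.\ \cite[Lemma 5.6.5]{hsiao}), I would compute the jumps across $\partial\Omega$:
\begin{align*}
\gamma^-\boldsymbol U^\star-\gamma^+\boldsymbol U^\star &= \boldsymbol\Phi^\star,\\
\mathring{\boldsymbol T}^-(P^\star,\boldsymbol U^\star)-\mathring{\boldsymbol T}^+(P^\star,\boldsymbol U^\star) &= \boldsymbol\Psi^*,
\end{align*}
(up to the standard sign conventions in that reference), so that $\gamma^-\boldsymbol U^\star=\boldsymbol\Phi^\star$ and $\mathring{\boldsymbol T}^-(P^\star,\boldsymbol U^\star)=\boldsymbol\Psi^*$ on $\partial\Omega$.

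Now the pair $(P^\star,\boldsymbol U^\star)$ restricted to $\Omega^-$ solves the homogeneous constant-coefficient Stokes system there, with appropriate decay at infinity coming from the mapping properties of the layer potentials. The support conditions give the crucial mixed structure: $\boldsymbol\Phi^*$, hence $\boldsymbol\Phi^\star=\mu\boldsymbol\Phi^*$, is supported in $\overline S_2$, so $\gamma^-\boldsymbol U^\star=\boldsymbol 0$ on $S_1$; and $\boldsymbol\Psi^*$ is supported in $\overline S_1$, so $\mathring{\boldsymbol T}^-(P^\star,\boldsymbol U^\star)=\boldsymbol 0$ on $S_2$. Thus $(P^\star,\boldsymbol U^\star)$ solves a homogeneous \emph{exterior} mixed BVP for the constant-coefficient Stokes system with zero data. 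By the exterior analogue of the uniqueness argument of Theorem~\ref{ch2BVPUS} (the first Green identity in $\Omega^-$, the rigid-motion characterisation, and the decay at infinity to kill the rigid motion, then the Neumann condition to kill the pressure), one gets $\boldsymbol U^\star\equiv\boldsymbol 0$ and $P^\star\equiv 0$ in $\Omega^-$ as well. Consequently $\boldsymbol\Phi^\star=\gamma^-\boldsymbol U^\star-\gamma^+\boldsymbol U^\star=\boldsymbol 0$, hence $\boldsymbol\Phi^*=\boldsymbol 0$; and $\boldsymbol\Psi^*=\mathring{\boldsymbol T}^-(P^\star,\boldsymbol U^\star)-\mathring{\boldsymbol T}^+(P^\star,\boldsymbol U^\star)=\boldsymbol 0$.

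\textbf{Main obstacle.} The delicate point is the exterior uniqueness step: one must pin down the correct behaviour at infinity of $\mathring{\boldsymbol V}\boldsymbol\Psi^*$, $\mathring{\boldsymbol W}\boldsymbol\Phi^\star$ and of $\mathring\Pi^s,\mathring\Pi^d$ (these are $O(|\boldsymbol y|^{-1})$ for the velocity and $O(|\boldsymbol y|^{-2})$ for the pressure, with gradients decaying one order faster), verify that this decay is compatible with applying the first Green identity on $\Omega^-$ (or on a large ball minus $\overline\Omega^+$, letting the radius tend to infinity and checking the boundary term at infinity vanishes), and confirm that the decay rules out the nontrivial rigid motions $\boldsymbol a+\boldsymbol b\times\boldsymbol x$. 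Everything else is a routine assembly of the jump relations and the support hypotheses already available from the constant-coefficient theory.
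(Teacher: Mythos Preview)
Your argument is correct in outline and follows a classical transmission-problem route, but it is \emph{not} the route the paper takes. The paper's proof is much shorter: after the same first step (multiplying by $\mu$ and passing to the constant-coefficient potentials via \eqref{ch2relationVW}, \eqref{ch2relationP}), it does \emph{not} go to the exterior domain at all. Instead it takes the trace of the velocity equation on $S_1$ and the constant-coefficient traction of the pair on $S_2$, applies the jump relations (Theorem~\ref{ch2jumps}) and the notation \eqref{ch2relationL}, and lands directly on the $2\times 2$ boundary system
\[
r_{S_1}\mathring{\boldsymbol{\mathcal V}}\boldsymbol\Psi^*-r_{S_1}\mathring{\boldsymbol{\mathcal W}}(\mu\boldsymbol\Phi^*)=\boldsymbol 0,\qquad
r_{S_2}\mathring{\boldsymbol{\mathcal W}}{}'\boldsymbol\Psi^*-r_{S_2}\mathring{\boldsymbol{\mathcal L}}(\mu\boldsymbol\Phi^*)=\boldsymbol 0,
\]
which is exactly the operator $\mathring{\mathcal M}^{11}$ of \eqref{ch2M11CC1}. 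Invertibility of this operator (Theorem~\ref{ch2thinvM11ring}, i.e.\ \cite[Theorem~3.10]{kohr1}) then gives $\boldsymbol\Psi^*=\boldsymbol 0$, $\mu\boldsymbol\Phi^*=\boldsymbol 0$ immediately.

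The trade-off is clear. The paper's proof is one line once the BIE invertibility from \cite{kohr1} is quoted, and it stays entirely on $\partial\Omega$; your proof is more self-contained (no black-box boundary invertibility) but pays for it with the exterior uniqueness step you flag as the main obstacle. That step is genuinely where the work lies in your approach: you must check that $\mathring{\boldsymbol V},\mathring{\boldsymbol W},\mathring\Pi^s,\mathring\Pi^d$ have the stated decay, justify the Green identity on $\Omega^-$ with vanishing contribution at infinity, and argue that the only rigid motion compatible with $O(|\boldsymbol y|^{-1})$ decay is zero. All of this is standard for the constant-coefficient Stokes system, but none of it is written down in the present paper, so your proof would require either importing those facts or proving them. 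Also watch the signs in your jump computations: with the conventions of Theorem~\ref{ch2jumps} one gets $\gamma^-\boldsymbol U^\star-\gamma^+\boldsymbol U^\star=-\boldsymbol\Phi^\star$ and $\mathring{\boldsymbol T}^-(P^\star,\boldsymbol U^\star)-\mathring{\boldsymbol T}^+(P^\star,\boldsymbol U^\star)=-\boldsymbol\Psi^*$, which does not affect the conclusion but should be stated correctly.
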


\begin{proof}

Multiplying the second equation in \eqref{ch2lema2iii} by $\mu$ and applying relations \eqref{ch2relationVW}, \eqref{ch2relationP}, we obtain 
\begin{equation}\label{ch2lema2.3.1}
\mathring{\Pi}^s(\boldsymbol{\Psi}^{*}) -\mathring{\Pi}^d(\mu\boldsymbol{\Phi}^{*}) = {0},\quad
\mathring{\boldsymbol{V}}\boldsymbol{\Psi}^{*}(\boldsymbol{x})-\mathring{\boldsymbol{W}}(\mu\boldsymbol{\Phi}^{*}(\boldsymbol{x}))=\boldsymbol{0}.
\end{equation} 
Let us take the trace of the second equation in \eqref{ch2lema2.3.1} restricting it to $S_{1}$ and  take the traction with the constant coefficient $\mu=1$ of both equations in \eqref{ch2lema2.3.1} restricting it to $S_{2}$. Keeping in mind the jump relations given in  Theorem \ref{ch2jumps} and notation \eqref{ch2relationL}, we arrive at the system of equations
\begin{align*}
r_{S_{1}}\mathring{\boldsymbol{\mathcal{V}}}{\boldsymbol{\Psi}}(\boldsymbol{x})-r_{S_{1}}\mathring{\boldsymbol{\mathcal{W}}}\boldsymbol{\Phi}^*(\boldsymbol{x})=\boldsymbol{0},\hspace{0.5em}&\text{on}\hspace{0.5em} S_{1},\\ 
r_{S_{2}}\mathring{\boldsymbol{\mathcal{W'}}}\boldsymbol{\Psi}^*(\boldsymbol{x})-r_{S_{2}}\mathring{\boldsymbol{\mathcal{L}}}\widehat{\boldsymbol{\Phi}}(\boldsymbol{x})=\boldsymbol{0},\hspace{0.5em}&\text{on}\hspace{0.5em} S_{2},
\end{align*}
where 
$\widehat{\boldsymbol{\Phi}}:=\mu\boldsymbol{\Phi}^{*}$.

This BIE system has been studied in \cite[Theorem 3.10]{kohr1} (see Theorem~\ref{ch2thinvM11ring} below) which implies that it has  only the trivial solution, $\boldsymbol{\Psi}^* = \boldsymbol{0}$, $\widehat{\boldsymbol{\Phi}}=\boldsymbol{0}$.  
\end{proof}

 \section{ BDIE systems}

We aim to obtain two different { BDIE systems} for mixed BVP \eqref{ch2BVPM} { following the procedure similar to the one employed for a scalar PDE in  \cite{CMN-1}, \cite{carlos1} and \cite{MikEABEM2002} and} references therein. 

 To this end, let the functions $\boldsymbol{\Phi}_{0}\in \boldsymbol{H}^{1/2}({\partial\Omega})$ and $\boldsymbol{\Psi}_{0}\in \boldsymbol{H}^{-1/2}({\partial\Omega})$ be some continuations of the boundary functions 
$\boldsymbol{\varphi}_{0}\in \boldsymbol{H}^{1/2}({\partial\Omega}_{D})$ and 
$\boldsymbol{\psi}_{0}\in \boldsymbol{H}^{-1/2}({\partial\Omega}_{N})$ from \eqref{ch2BVPD} and \eqref{ch2BVPN}.
Let us now represent 
\begin{equation}\label{ch2gTrepr}
\gamma^{+}\boldsymbol{v}=\boldsymbol{\Phi}_{0} + \boldsymbol{\varphi},\quad 
\boldsymbol{T}^{+}(p,\bs{v}) = \boldsymbol{\Psi}_{0} +\boldsymbol{\psi} \text{ on } {\partial\Omega}, 
\end{equation}
where  $\boldsymbol{\varphi}\in\widetilde{\boldsymbol{H}}^{1/2}({\partial\Omega}_{N})$ and $\boldsymbol{\psi}\in\widetilde{\boldsymbol{H}}^{-1/2}({\partial\Omega}_{D})$ are unknown boundary functions. 
\subsection{ BDIE system (M11$_*$)}
Let us now take equations \eqref{ch2GP} and \eqref{ch2GV} in the domain $\Omega$ and restrictions of equations \eqref{ch2GG} and \eqref{ch2GT} to the boundary parts ${\partial\Omega}_{D}$ and ${\partial\Omega}_{N}$, respectively. Substituting there representations \eqref{ch2gTrepr} and considering further the unknown boundary functions $\boldsymbol{\varphi}$ and $\boldsymbol{\psi}$ as formally independent of (segregated from) the unknown domain functions $p$ and $\boldsymbol{v}$, we obtain the following system (M11$_*$)  of four boundary-domain integral equations for four unknowns, 
$(p,\bs{v})\in \bs{H}^{1,0}(\Omega,\boldsymbol{\mathcal{A}})$,  $\boldsymbol{\varphi}\in\widetilde{\boldsymbol{H}}^{1/2}({\partial\Omega}_{N})$ and $\boldsymbol{\psi}\in\widetilde{\boldsymbol{H}}^{-1/2}({\partial\Omega}_{D})$:
\begin{subequations}
\label{ch2M11}
\begin{align}
p+\mathcal{R}^{\bullet} \boldsymbol{v} - {\Pi^s}\boldsymbol{\psi} +{\Pi^d}\boldsymbol{\varphi}&=
F_0,\quad\text{in } \Omega,\label{ch2M11p}\\
\boldsymbol{v}+\boldsymbol{\mathcal{R}v}-\boldsymbol{V\psi}+\boldsymbol{W\varphi}&=
\boldsymbol{F},\quad\text{in } \Omega,\label{ch2M11v}\\
r_{{\partial\Omega}_{D}}\boldsymbol{\gamma}^{+}\boldsymbol{\mathcal{R}v}-r_{{\partial\Omega}_{D}}\boldsymbol{\mathcal{V}\psi}
+r_{{\partial\Omega}_{D}}\boldsymbol{\mathcal{W}\varphi}&=
r_{{\partial\Omega}_{D}}\boldsymbol{\gamma}^{+}\boldsymbol{F}-\boldsymbol{\varphi}_{0},\quad\text{on } {\partial\Omega}_{D}, \label{ch2M11G}\\
r_{{\partial\Omega}_{N}}\boldsymbol{T}^{+}( \mathcal{R}^{\bullet},  \boldsymbol{\mathcal{R}})\boldsymbol{v}
-r_{{\partial\Omega}_{N}}\boldsymbol{\mathcal{W'}\psi}+r_{{\partial\Omega}_{N}}\boldsymbol{\mathcal{L}}^{+}\boldsymbol{\varphi}&=
r_{{\partial\Omega}_{N}}\boldsymbol{T}^{+}( F_{0}, \boldsymbol{F})-\boldsymbol{\psi}_{0}, \quad\text{on } {\partial\Omega}_{N},\label{ch2M11T}
\end{align}
\end{subequations}
where
\begin{align}
F_0=\mathring{\mathcal{Q}}\boldsymbol{f}+\dfrac{4}{3}g\mu+{\Pi^s}\boldsymbol{\Psi}_{0}-{\Pi^d}\boldsymbol{\Phi}_{0},\quad
\boldsymbol{F}=\boldsymbol{\boldsymbol{\mathcal{U}f}}{-}\boldsymbol{\mathcal{Q}}g
+\boldsymbol{V\Psi}_{0}-\boldsymbol{W\Phi}_{0}. \label{ch2F0F}
\end{align}
By  Theorems \ref{ch2thmUR:theo} and \ref{ch2T3}, $(F_{0},\boldsymbol{F})\in \bs{H}^{1,0}(\Omega,\boldsymbol{\mathcal{A}})$ and hence $\bs{T}(F_{0},\boldsymbol{F})$ is well defined.

{
Let us denote the right hand side of BDIE system \eqref{ch2M11} as
\begin{align}\label{ch2F11def}
\mathcal{F}^{11}_{*}&=
[F_{0}, \boldsymbol{F}, r_{{\partial\Omega}_{D}}\gamma^{+}\boldsymbol{F}-\boldsymbol{\varphi}_{0}, r_{{\partial\Omega}_{N}}\boldsymbol{T}^{+}( F_{0} , \bs{F})-\boldsymbol{\psi}_{0}]. 
\end{align}
Then Theorems \ref{ch2thmUR:theo} and \ref{ch2T3} imply the inclusion
${\mathcal{F}^{11}_*}
\in \bs{H}^{1,0}(\Omega,\boldsymbol{\mathcal{A}})\times\boldsymbol{H}^{1/2}({\partial\Omega}_{D})\times 
 \boldsymbol{H}^{-1/2}({\partial\Omega}_{N}).
$ 

\begin{rem} \label{ch2remM11}
{ Let $\mathcal{F}_{*}^{11}$ be defined by \eqref{ch2F0F}, \eqref{ch2F11def}.
Then} $\mathcal F_*^{11}=\bs{0}$ if and only if 
$(\boldsymbol{f}, g, \boldsymbol{\Phi}_{0},\boldsymbol{\Psi}_{0})=\boldsymbol{0}$.

Indeed, from \eqref{ch2F0F} and \eqref{ch2F11def} we immediately obtain that $(\bs{f}, g,\boldsymbol{\Phi}_{0},\boldsymbol{\Psi}_{0})=0$ implies $\mathcal{F}^{11}_{*}=0$. 
Let us now prove that if $\mathcal{F}^{11}_{*}=0$ then $(\bs{f}, g,\boldsymbol{\Phi}_{0},\boldsymbol{\Psi}_{0})=0$. 
Lemma \ref{ch2L1} with $F_{0}=0$ for $p$ and $\bs{F}=0$ for $\bs{v}$ applied to equations \eqref{ch2F0F} implies that $\bs{f}=0$, $g=0$ and 
\begin{align}\label{PP0}
{\Pi^s}\bs{\Psi}_{0}- {\Pi^d}\bs{\Phi}_{0}&=0,\quad
\bs{V}\bs{\Psi}_{0}- \bs{W}\bs{\Phi}_{0}=0\quad \mbox{in } \Omega.
\end{align}
In addition, since $F_{0}=0$ and $\bs{F}=0$, we get from \eqref{ch2F11def} that
\begin{align*}
r_{{\partial\Omega}_{D}}\boldsymbol{\Phi}_{0}=\boldsymbol{\varphi}_{0}= \bs{0},\quad
r_{{\partial\Omega}_{N}}\boldsymbol{\Psi}_{0}=\boldsymbol{\psi}_{0}=\bs{0}.
\end{align*}
Consequently, $\bs{\Psi}_{0}\in \widetilde{H}^{-1/2}({\partial\Omega}_{D})$, 
$\bs{\Phi}_{0}\in \widetilde{H}^{1/2}({\partial\Omega}_{N})$ and by \eqref{PP0} and Lemma \ref{ch2lemma2} we  obtain that $\boldsymbol{\Psi}_{0}=0$ and $\boldsymbol{\Phi}_{0}=0$ on ${\partial\Omega}$. 
\end{rem}
\subsection{ BDIE system (M22$_*$)}
Let us  take equations \eqref{ch2GP} and \eqref{ch2GV} in the domain $\Omega$ and restrictions of equations \eqref{ch2GG} and \eqref{ch2GT} to the boundary parts ${\partial\Omega}_{N}$ and ${\partial\Omega}_{D}$ respectively. Substituting there representations \eqref{ch2gTrepr} and considering again the unknown boundary functions $\boldsymbol{\varphi}$ and $\boldsymbol{\psi}$ as formally independent of (segregated from) the unknown domain functions $p$ and $\boldsymbol{v}$, we obtain the following system (M22$_*$) of four BDIEs { for $(  p,\bs{v})\in \bs{H}^{1,0}(\Omega,\boldsymbol{\mathcal{A}})$,  $\boldsymbol{\varphi}\in\widetilde{\boldsymbol{H}}^{1/2}({\partial\Omega}_{N})$ and $\boldsymbol{\psi}\in\widetilde{\boldsymbol{H}}^{-1/2}({\partial\Omega}_{D})$,}
\begin{subequations}
\label{ch2M22}
\begin{align}
\label{ch2M22p}
p+\mathcal{R}^{\bullet} \boldsymbol{v} - {\Pi^s}\boldsymbol{\psi} +{\Pi^d}\boldsymbol{\varphi}&=
F_0\quad in \ \Omega,\\
\label{ch2M22v}
\boldsymbol{v}+\boldsymbol{\mathcal{R}v}-\boldsymbol{V\psi}+\boldsymbol{W\varphi}&=
\boldsymbol{F}\quad in \ \Omega,\\
\label{ch2M22D}
\hspace{-1em}\dfrac{1}{2}\boldsymbol{\psi}+r_{{\partial\Omega}_{D}}\boldsymbol{T}^{+}( \mathcal{R}^{\bullet},  \boldsymbol{\mathcal{R}})\boldsymbol{v}
-r_{{\partial\Omega}_{D}}\boldsymbol{\mathcal{W'}\psi}+r_{{\partial\Omega}_{D}}\boldsymbol{\mathcal{L}}^{+}\boldsymbol{\varphi}&=r_{{\partial\Omega}_{D}}\boldsymbol{T}^{+}( F_{0}, \boldsymbol{F})-r_{{\partial\Omega}_{D}}\boldsymbol{\boldsymbol{\Psi}}_{0}
\quad on \ {\partial\Omega}_{D},\\
\label{ch2M22N}
\dfrac{1}{2}\boldsymbol{\boldsymbol{\varphi}}+r_{{\partial\Omega}_{N}}\boldsymbol{\gamma}^{+}\boldsymbol{\mathcal{R}v}-r_{{\partial\Omega}_{N}}\boldsymbol{\mathcal{V}\psi}
+r_{{\partial\Omega}_{N}}\boldsymbol{\mathcal{W}\varphi}
&=r_{{\partial\Omega}_{N}}\boldsymbol{\gamma}^{+}\boldsymbol{F}-r_{{\partial\Omega}_{N}}\boldsymbol{\boldsymbol{\Phi}}_{0}\quad on \ {\partial\Omega}_{N},
\end{align}
\end{subequations}
{ where the terms $F_{0}$ and $\boldsymbol{F}$  in the right hand side are given by \eqref{ch2F0F}.

Let us denote the right hand side of BDIE system \eqref{ch2M22} as
\begin{align}\label{7.6}
\mathcal{F}_{*}^{22}
&=[F_{0},\boldsymbol{F}, 
r_{{\partial\Omega}_{D}}\boldsymbol{T}^{+}( F_{0}, \boldsymbol{F})
 -r_{{\partial\Omega}_{D}}\boldsymbol{\boldsymbol{\Psi}}_{0}, 
 r_{{\partial\Omega}_{N}}\boldsymbol{\gamma}^{+}\boldsymbol{F}
  -r_{{\partial\Omega}_{N}}\boldsymbol{\boldsymbol{\Phi}}_{0} ].
\end{align}
Then Theorems \ref{ch2thmUR:theo} and \ref{ch2T3} imply the inclusion
$
\mathcal{F}_{*}^{22}\in \bs{H}^{1,0}(\Omega,\boldsymbol{\mathcal{A}})\times \boldsymbol{H}^{-1/2}({\partial\Omega}_{D})\times \boldsymbol{H}^{1/2}({\partial\Omega}_{N}).
$ 
}

Note that the BDIE system \eqref{ch2M22p}-\eqref{ch2M22N} can be split into the BDIE system (M22) of 3 vector equations, \eqref{ch2M22v}-\eqref{ch2M22N}, for 3 vector unknowns, $\boldsymbol{v}$, $\boldsymbol{\psi}$ and $\boldsymbol{\varphi}$, and the separate equation \eqref{ch2M22p} that can be used, after solving the system, to obtain the pressure, $p$. However, since the couple $(p, \boldsymbol{v})$ shares the space $\bs{H}^{1,0}(\Omega,\boldsymbol{\mathcal{A}})$, equations \eqref{ch2M22v}, \eqref{ch2M22D} and \eqref{ch2M22N} are not completely separate from equation \eqref{ch2M22p}.

\begin{rem}\label{ch2remM22} 
{ Let $\mathcal{F}_{*}^{22}$ be given by \eqref{ch2F0F}, \eqref{7.6}. 
Then} $\mathcal F_*^{22}=\bs{0}$ if and only if $(\bs{f},g,\boldsymbol{\Phi}_0,\boldsymbol{\Psi}_0)=0$.
Indeed, it is evident that $(\bs{f}, g,\boldsymbol{\Phi}_{0},\boldsymbol{\Psi}_{0})=0$ implies $\mathcal{F}_{*}^{22}=0$. 
Let now $\mathcal{F}^{22}_{*}=0.$ 
Lemma \ref{ch2L1} with $F_{0}=0$ for $p$ and $\bs{F}=0$ for $\bs{v}$ applied to equations \eqref{ch2F0F} implies that $\bs{f}=0$, $g=0$ and 
\begin{align}\label{6.8}
{\Pi^s}\bs{\Psi}_{0}- {\Pi^d}\bs{\Phi}_{0}&=0,\quad
\bs{V}\bs{\Psi}_{0}- \bs{W}\bs{\Phi}_{0}=0\quad \mbox{in } \Omega.
\end{align}
In addition, since $F_{0}=0$ and $\bs{F}=0$, we get from \eqref{7.6} that
\begin{align*}
 & r_{{\partial\Omega}_{D}}\boldsymbol{\Psi}_{0}=0,\quad
 r_{{\partial\Omega}_{N}}\boldsymbol{\Phi}_{0}=0.
\end{align*}
Consequently, $\bs{\Psi}_{0}\in \widetilde{H}^{-1/2}({\partial\Omega}_{N})$ and $\bs{\Phi}_{0}\in \widetilde{H}^{1/2}({\partial\Omega}_{D})$ . Therefore by \eqref{6.8} and Lemma \ref{ch2lemma2} we obtain that $\boldsymbol{\Psi}_{0}=0$ and $\boldsymbol{\Phi}_{0}=0$ on ${\partial\Omega}$. 
\end{rem}

\subsection{Equivalence of BDIE Systems and BVP}\label{6.3}
\begin{theorem}[Equivalence Theorem]\label{ch2thEQm11}
Let $\boldsymbol{f}\in \boldsymbol{L}_{2}(\Omega)$, $g\in L_{2}(\Omega)$ and let $\boldsymbol{\Phi}_{0}\in \boldsymbol{H}^{-1/2}({\partial\Omega})$ and  $\boldsymbol{\Psi}_{0}\in \boldsymbol{H}^{-1/2}({\partial\Omega})$ be some fixed extensions of $\boldsymbol{\varphi}_{0}\in \boldsymbol{H}^{1/2}({\partial\Omega}_{D})$ and $\boldsymbol{\psi}_{0}\in \boldsymbol{H}^{-1/2}({\partial\Omega}_{N})$ respectively. 
\begin{enumerate}
\item[(i)] If a couple $(p,\bs{v})\in L_{2}(\Omega)\times \bs{H}^{1}(\Omega)$ solves the mixed BVP \eqref{ch2BVPM}, then the set
$(p, \boldsymbol{v}, \boldsymbol{\psi}, \boldsymbol{\varphi})$
where
\begin{equation}\label{ch2eqcond}
\boldsymbol{\varphi}=\gamma^{+}\boldsymbol{v}-\boldsymbol{\Phi}_{0}, \quad
\boldsymbol{\psi}=\boldsymbol{T}^{+}(p,\bs{v})-\boldsymbol\Psi{}_{0}\quad \mbox{on } \partial\Omega,
\end{equation}
belongs to
$\bs{H}^{1,0}(\Omega;\boldsymbol{\mathcal{A}}) 
\times\widetilde{\boldsymbol{H}}^{-1/2}({\partial\Omega}_{D})
\times\widetilde{\boldsymbol{H}}^{1/2}({\partial\Omega}_{N})$ 
and
solves $BDIE$ systems \eqref{ch2M11} and \eqref{ch2M22}. 
\item[(ii)] If a set
$(p, \boldsymbol{v}, \boldsymbol{\psi}, \boldsymbol\varphi{} )\in 
L_{2}(\Omega)\times \bs{H}^{1}(\Omega) \times\widetilde{\boldsymbol{H}}^{-1/2}({\partial\Omega}_{D})\times\widetilde{\boldsymbol{H}}^{1/2}({\partial\Omega}_{N})$ 
solves one of BDIE systems, \eqref{ch2M11} or  \eqref{ch2M22}, then it solves the other BDIE system, the couple  $(p,\bs{v})$ belongs to $\bs{H}^{1,0}(\Omega;\boldsymbol{\mathcal{A}})$ and  solves mixed BVP \eqref{ch2BVPM}, while $\boldsymbol{\psi}, \boldsymbol{\varphi}$ satisfy \eqref{ch2eqcond}.  
\item[(iii)] Both BDIE systems, \eqref{ch2M11} and \eqref{ch2M22}, are uniquely solvable for
$(p, \boldsymbol{v}, \boldsymbol{\psi}, \boldsymbol\varphi{} )\in L_{2}(\Omega)\times \bs{H}^{1}(\Omega) \times\widetilde{\boldsymbol{H}}^{-1/2}({\partial\Omega}_{D})\times\widetilde{\boldsymbol{H}}^{1/2}({\partial\Omega}_{N})$. 
\end{enumerate}
\end{theorem}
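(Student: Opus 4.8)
The plan is to follow the three–step pattern familiar from the scalar variable–coefficient case in \cite{CMN-1} and the incompressible Stokes case in \cite{carlos1}: (i) a direct verification, (ii) a converse resting on Lemma~\ref{ch2L1} and Lemma~\ref{ch2lemma2}, and (iii) a Fredholm argument combined with the BVP uniqueness Theorem~\ref{ch2BVPUS}. For (i), if $(p,\bs v)\in L_{2}(\Omega)\times\bs H^{1}(\Omega)$ solves \eqref{ch2BVPM}, then $\bs{\mathcal A}(p,\bs v)=\bs f\in\bs L_{2}(\Omega)$ gives $(p,\bs v)\in\bs H^{1,0}(\Omega;\bs{\mathcal A})$, so the reduced third Green identities \eqref{ch2GP}, \eqref{ch2GV} and their trace \eqref{ch2GG} and traction \eqref{ch2GT} hold; substituting the splitting \eqref{ch2gTrepr} with $\bs\varphi:=\bs\gamma^{+}\bs v-\bs\Phi_{0}$, $\bs\psi:=\bs T^{+}(p,\bs v)-\bs\Psi_{0}$ together with \eqref{ch2F0F}, and restricting \eqref{ch2GG} to $\partial\Omega_D$ and \eqref{ch2GT} to $\partial\Omega_N$, turns them into exactly \eqref{ch2M11}, and into \eqref{ch2M22} if the two boundary parts are interchanged. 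The only nontrivial point is the membership in the tilde–spaces: by the Dirichlet condition \eqref{ch2BVPD}, $r_{\partial\Omega_D}\bs\varphi=r_{\partial\Omega_D}\bs\gamma^{+}\bs v-r_{\partial\Omega_D}\bs\Phi_{0}=\bs\varphi_{0}-\bs\varphi_{0}=\bs 0$, so $\bs\varphi\in\widetilde{\bs H}^{1/2}(\partial\Omega_N)$, and likewise the Neumann condition \eqref{ch2BVPN} gives $r_{\partial\Omega_N}\bs\psi=\bs 0$, i.e.\ $\bs\psi\in\widetilde{\bs H}^{-1/2}(\partial\Omega_D)$.

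For (ii), suppose $(p,\bs v,\bs\psi,\bs\varphi)$ solves \eqref{ch2M11} (the case of \eqref{ch2M22} being analogous, with $\partial\Omega_D$ and $\partial\Omega_N$ exchanged in the boundary equations). Equations \eqref{ch2M11p}, \eqref{ch2M11v} are precisely the hypotheses of Lemma~\ref{ch2L1} with $\bs\Psi=\bs\psi+\bs\Psi_{0}$, $\bs\Phi=\bs\varphi+\bs\Phi_{0}$, hence $(p,\bs v)\in\bs H^{1,0}(\Omega;\bs{\mathcal A})$, $\bs{\mathcal A}(p,\bs v)=\bs f$, $\div\bs v=g$, and, setting $\bs\Psi^{*}:=\bs\psi+\bs\Psi_{0}-\bs T^{+}(p,\bs v)$, $\bs\Phi^{*}:=\bs\varphi+\bs\Phi_{0}-\bs\gamma^{+}\bs v$, relations \eqref{ch2lemarel2}, \eqref{ch2lemare1} yield ${\Pi^{s}}\bs\Psi^{*}-{\Pi^{d}}\bs\Phi^{*}=\bs 0$ and $\bs V\bs\Psi^{*}-\bs W\bs\Phi^{*}=\bs 0$ in $\Omega$. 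Since the PDEs now hold, \eqref{ch2GG} and \eqref{ch2GT} are available, and I would subtract the boundary equation \eqref{ch2M11G} from the restriction of \eqref{ch2GG} to $\partial\Omega_D$, using the jump relations of Theorem~\ref{ch2jumps}, the trace of $\bs V\bs\Psi^{*}-\bs W\bs\Phi^{*}=\bs 0$, and $r_{\partial\Omega_D}\bs\varphi=\bs 0$; after cancellation this collapses to $r_{\partial\Omega_D}\bs\Phi^{*}=\bs 0$, and the parallel manipulation of \eqref{ch2M11T} against the restriction of \eqref{ch2GT} to $\partial\Omega_N$ gives $r_{\partial\Omega_N}\bs\Psi^{*}=\bs 0$. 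Thus $\bs\Psi^{*}\in\widetilde{\bs H}^{-1/2}(\partial\Omega_D)$, $\bs\Phi^{*}\in\widetilde{\bs H}^{1/2}(\partial\Omega_N)$, and Lemma~\ref{ch2lemma2} forces $\bs\Psi^{*}=\bs\Phi^{*}=\bs 0$, i.e.\ $\bs\gamma^{+}\bs v=\bs\varphi+\bs\Phi_{0}$, $\bs T^{+}(p,\bs v)=\bs\psi+\bs\Psi_{0}$, which is \eqref{ch2eqcond}. Combined with the supports of $\bs\varphi,\bs\psi$ this gives $r_{\partial\Omega_D}\bs\gamma^{+}\bs v=r_{\partial\Omega_D}\bs\Phi_{0}=\bs\varphi_{0}$ and $r_{\partial\Omega_N}\bs T^{+}(p,\bs v)=r_{\partial\Omega_N}\bs\Psi_{0}=\bs\psi_{0}$, so $(p,\bs v)$ solves \eqref{ch2BVPM}; by part (i) the same $(p,\bs v,\bs\psi,\bs\varphi)$ then solves the other BDIE system as well.

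For (iii), uniqueness is immediate from (ii) and Theorem~\ref{ch2BVPUS}: any solution of either system determines the unique BVP solution $(p,\bs v)$, hence $\bs\psi,\bs\varphi$ through \eqref{ch2eqcond}. For solvability I would regard the left–hand–side operator $\mathcal M^{11}_{*}$ of \eqref{ch2M11} as bounded from $\mathbb X:=L_{2}(\Omega)\times\bs H^{1}(\Omega)\times\widetilde{\bs H}^{-1/2}(\partial\Omega_D)\times\widetilde{\bs H}^{1/2}(\partial\Omega_N)$ into $\mathbb Y:=L_{2}(\Omega)\times\bs H^{1}(\Omega)\times\bs H^{1/2}(\partial\Omega_D)\times\bs H^{-1/2}(\partial\Omega_N)$, and split $\mathcal M^{11}_{*}=\mathcal M^{11}_{*,0}+\mathcal M^{11}_{*,c}$, where $\mathcal M^{11}_{*,0}$ keeps the identity on $p$, the identity on $\bs v$, the non–smoothing terms $-{\Pi^{s}}\bs\psi+{\Pi^{d}}\bs\varphi$ and $-\bs V\bs\psi+\bs W\bs\varphi$ in \eqref{ch2M11p}, \eqref{ch2M11v}, the operator $-r_{\partial\Omega_D}\bs{\mathcal V}$ acting on $\bs\psi$ in \eqref{ch2M11G}, and the operator $r_{\partial\Omega_N}\widehat{\bs{\mathcal L}}$ acting on $\bs\varphi$ in \eqref{ch2M11T}. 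In the ordering $(p,\bs v,\bs\psi,\bs\varphi)$ this $\mathcal M^{11}_{*,0}$ is block upper–triangular with diagonal blocks $I$, $I$, $-r_{\partial\Omega_D}\bs{\mathcal V}$, $r_{\partial\Omega_N}\widehat{\bs{\mathcal L}}$; by \eqref{ch2relationVWcal} and \eqref{ch2relationL} these last two equal $-r_{\partial\Omega_D}\mathring{\bs{\mathcal V}}$ and $r_{\partial\Omega_N}\mathring{\bs{\mathcal L}}$ composed with multiplication by the smooth positive factors $\mu^{-1}$, $\mu$, hence are isomorphisms by the classical invertibility of the constant–coefficient Stokes single–layer operator on $\partial\Omega_D$ and of the constant–coefficient Stokes hypersingular operator on $\partial\Omega_N$ (recall $\partial\Omega_D,\partial\Omega_N\neq\emptyset$; see \cite{kohr1,hsiao}), so $\mathcal M^{11}_{*,0}$ is invertible. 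All entries collected in $\mathcal M^{11}_{*,c}$ — namely $\mathcal R^{\bullet}$, $\bs{\mathcal R}$, $\gamma^{+}\bs{\mathcal R}$, $\bs T^{+}(\mathcal R^{\bullet},\bs{\mathcal R})$ (Theorem~\ref{ch2thRcomp}), $r_{\partial\Omega_D}\bs{\mathcal W}$ and $r_{\partial\Omega_N}\bs{\mathcal W}'$ (Theorem~\ref{ch2T3cal}), and $r_{\partial\Omega_N}(\bs{\mathcal L}^{+}-\widehat{\bs{\mathcal L}})$ (Corollary~\ref{ch2Lcompact}) — are compact, so $\mathcal M^{11}_{*}$ is Fredholm of index zero. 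Since the zero right–hand side corresponds, by Remark~\ref{ch2remM11}, to zero data, while (ii) together with Theorem~\ref{ch2BVPUS} then force $(p,\bs v,\bs\psi,\bs\varphi)=\bs 0$, the operator $\mathcal M^{11}_{*}$ is injective, hence an isomorphism; therefore \eqref{ch2M11} is uniquely solvable for $\mathcal F^{11}_{*}\in\mathbb Y$, and solvability of \eqref{ch2M22} follows from (ii) (uniqueness again from (ii) and Theorem~\ref{ch2BVPUS}, using Remark~\ref{ch2remM22}).

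I expect the delicate steps to be, first, the sign and jump–relation bookkeeping in part (ii) that makes the subtraction of a boundary BDIE from the corresponding trace/traction identity collapse exactly to $r_{\partial\Omega_D}\bs\Phi^{*}=\bs 0$ and $r_{\partial\Omega_N}\bs\Psi^{*}=\bs 0$, and, second, making precise in part (iii) that modulo the compact remainders the principal boundary operators of \eqref{ch2M11} decouple into the single–layer operator on $\partial\Omega_D$ and the hypersingular operator on $\partial\Omega_N$, so that their classical constant–coefficient invertibility (up to the smooth factors $\mu^{\pm1}$) can be invoked.
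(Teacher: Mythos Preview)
Your parts (i) and (ii) follow the paper's argument closely; the only cosmetic difference is that in (ii) for system \eqref{ch2M11} the paper first takes the trace of \eqref{ch2M11v} and subtracts \eqref{ch2M11G} directly (and similarly the traction against \eqref{ch2M11T}) to obtain the Dirichlet and Neumann conditions, and only afterwards invokes Lemma~\ref{ch2L1}, whereas you apply Lemma~\ref{ch2L1} first and then subtract against the full Green identities \eqref{ch2GG}--\eqref{ch2GT}. Both orderings lead to the same inclusions $\bs\Psi^{*}\in\widetilde{\bs H}^{-1/2}(\partial\Omega_D)$, $\bs\Phi^{*}\in\widetilde{\bs H}^{1/2}(\partial\Omega_N)$ and the same appeal to Lemma~\ref{ch2lemma2}.

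Where you genuinely diverge is in (iii). The paper's proof of (iii) is one line: it appeals only to Theorem~\ref{ch2BVPUS} together with (i) and (ii), which strictly speaking yields \emph{uniqueness} of the BDIE solution (Theorem~\ref{ch2BVPUS} is a uniqueness statement only). Existence is not proved here; it is deferred to Section~7, where Theorem~\ref{ch2thinvM11} establishes invertibility of $\mathcal M^{11}_{*}$ by a Fredholm argument and Corollary~\ref{ch2invBVPM} then closes the loop for the BVP. Your Fredholm argument therefore anticipates Theorem~\ref{ch2thinvM11} rather than Theorem~\ref{ch2thEQm11}(iii). Moreover, your principal part $\mathcal M^{11}_{*,0}$ is not the one the paper uses: in Theorem~\ref{ch2thinvM11} the paper keeps the full $2\times2$ boundary block
\[
\begin{pmatrix} -r_{\partial\Omega_D}\bs{\mathcal V} & r_{\partial\Omega_D}\bs{\mathcal W}\\ -r_{\partial\Omega_N}\mathring{\bs{\mathcal W}}' & r_{\partial\Omega_N}\widehat{\bs{\mathcal L}}\end{pmatrix}
\]
and, after multiplying by $\mu$, invokes the invertibility of the coupled operator $\mathring{\mathcal M}^{11}$ from Theorem~\ref{ch2thinvM11ring} (i.e.\ \cite[Theorem~3.10]{kohr1}). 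You instead fully decouple the boundary block and rely on the separate isomorphisms $r_{\partial\Omega_D}\mathring{\bs{\mathcal V}}:\widetilde{\bs H}^{-1/2}(\partial\Omega_D)\to\bs H^{1/2}(\partial\Omega_D)$ and $r_{\partial\Omega_N}\mathring{\bs{\mathcal L}}:\widetilde{\bs H}^{1/2}(\partial\Omega_N)\to\bs H^{-1/2}(\partial\Omega_N)$. These do hold (by coercivity of the Stokes single-layer operator and coercivity of the hypersingular operator on a proper open subset of $\partial\Omega$, where no rigid motions are supported), but they are not among the results stated or cited in the paper, so you would need to supply or reference them independently. The paper's route has the advantage that it cites a single ready-made invertibility result; yours has the advantage of a cleaner triangular structure but carries the extra burden of justifying the two diagonal isomorphisms.
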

\begin{proof}
(i) Let $(p,\bs{v})\in L_{2}(\Omega)\times \bs{H}^{1}(\Omega)$ be a solution of BVP \eqref{ch2BVPM}. 
Since $\boldsymbol{f}\in \boldsymbol{L}_{2}(\Omega)$ then $(  p,\bs{v})\in \bs{H}^{1,0}(\Omega;\boldsymbol{\mathcal{A}})$. 
Let us define the functions $\boldsymbol{\varphi}$ and $\boldsymbol{\psi}$ by \eqref{ch2eqcond}. By the BVP boundary conditions,  $\gamma^{+}\boldsymbol{v}=\boldsymbol{\varphi}_{0}=\boldsymbol{\Phi}_{0}$ on ${\partial\Omega}_D$ and $\boldsymbol{T}^{+}(p,\bs{v})=\boldsymbol\psi{}_{0}=\boldsymbol\Psi{}_{0}$  on ${\partial\Omega}_N$.
Then \eqref{ch2eqcond} implies that $(\boldsymbol{\psi},\boldsymbol{\varphi}) \in \widetilde{\boldsymbol{H}}^{-1/2}({\partial\Omega}_{D})\times\widetilde{\boldsymbol{H}}^{1/2}({\partial\Omega}_{N})$. Taking into account the third Green identities \eqref{ch2GP}-\eqref{ch2GT}, we immediately obtain that 
$(p, \boldsymbol{v},\boldsymbol{\varphi}, \boldsymbol{\psi})$ solves BDIE systems \eqref{ch2M11} and and \eqref{ch2M22}.

(ii-11) Let 
$(p, \boldsymbol{v}, \boldsymbol{\psi}, \boldsymbol\varphi)\in 
L_{2}(\Omega)\times \bs{H}^{1}(\Omega) \times\widetilde{\boldsymbol{H}}^{-1/2}({\partial\Omega}_{D})\times\widetilde{\boldsymbol{H}}^{1/2}({\partial\Omega}_{N})$  
solve  BDIE system \eqref{ch2M11}. 
Then equations \eqref{ch2M11p}, \eqref{ch2M11v} and Theorems \ref{ch2thmUR:theo}, \ref{ch2T3} imply that 
$(p, \boldsymbol{v}, \boldsymbol{\psi}, \boldsymbol\varphi)\in 
\bs{H}^{1,0}(\Omega;\boldsymbol{\mathcal{A}}) \times\widetilde{\boldsymbol{H}}^{-1/2}({\partial\Omega}_{D})
\times\widetilde{\boldsymbol{H}}^{1/2}({\partial\Omega}_{N})$  and the canonical conormal derivative $\boldsymbol{T}^+(p,\bs{v})$ is well defined. 
If we take the trace of \eqref{ch2M11v} restricted to ${\partial\Omega}_{D}$, use the jump relations for the trace of $\bs{V}$ and $\bs{W}$, see Theorem \ref{ch2jumps}, and subtract it from \eqref{ch2M11G}, we arrive at
$r_{{\partial\Omega}_{D}}\gamma^{+}\boldsymbol{v} -\dfrac{1}{2}r_{{\partial\Omega}_{D}}\boldsymbol{\varphi}=\boldsymbol{\varphi}_{0}$ on ${\partial\Omega}_{D}.$
Since $\boldsymbol{\varphi}$ vanishes on ${\partial\Omega}_{D}$,  the Dirichlet condition \eqref{ch2BVPD} is satisfied. 

Repeating the same procedure but now taking the traction of \eqref{ch2M11p} and \eqref{ch2M11v}, restricted to ${\partial\Omega}_{N}$, using the jump relations for the traction of $({\Pi^d},\bs{W})$ and subtracting it from \eqref{ch2M11T}, we arrive at
$r_{{\partial\Omega}_{N}}\boldsymbol{T}(p,\bs{v}) -\dfrac{1}{2}r_{{\partial\Omega}_{N}}\boldsymbol{\psi}=\boldsymbol{\psi}_{0}
$ on ${\partial\Omega}_{N}$. Since $\boldsymbol{\psi}$ vanishes on ${\partial\Omega}_{N}$, the Neumann condition \eqref{ch2BVPN} is satisfied. 

Because $\boldsymbol{\varphi}_0=\boldsymbol{\Phi}_0$, on ${\partial\Omega}_D$; and $\boldsymbol{\psi}_0=\boldsymbol{\Psi}_0$, on ${\partial\Omega}_N$, we also obtain, 
\begin{align}
\bs{\Psi}^{*}&:=\boldsymbol{\psi} + \boldsymbol{\Psi}_{0} - \boldsymbol{T}^{+}(p,\bs{v})\in \widetilde{\boldsymbol{H}}^{-1/2}({\partial\Omega}_{D}),\label{Psi*inc}&
\bs{\Phi}^{*}&=\boldsymbol{\varphi} + \boldsymbol{\Phi}_{0} - \gamma^{+}\boldsymbol{v}\in \widetilde{\boldsymbol{H}}^{1/2}({\partial\Omega}_{N}).
\end{align}
By relations \eqref{ch2M11p} and \eqref{ch2M11v} the hypotheses of Lemma \ref{ch2L1} are satisfied with $\boldsymbol{\Psi}=\boldsymbol{\psi} + \boldsymbol{\Psi}_{0}$ and $\boldsymbol{\Phi} = \boldsymbol{\varphi} + \boldsymbol{\Phi}_{0}$ . 
As a result, we obtain that the couple $(p,\bs{v})$ satisfies \eqref{ch2BVP1} and \eqref{ch2BVPD}  and, moreover,
\begin{align}\label{ch2VW*}\quad
{\Pi^s}(\boldsymbol{\Psi}^{*}) -{\Pi^d}(\boldsymbol{\Phi}^{*}) = {0},\quad
\boldsymbol{V}(\boldsymbol{\Psi}^{*}) -\boldsymbol{W}(\boldsymbol{\Phi}^{*}) = \boldsymbol{0}\quad \text{in } \Omega
\end{align}
Due to inclusions \eqref{Psi*inc} and relations \eqref{ch2VW*},  Lemma \ref{ch2lemma2} for $S_{1}={\partial\Omega}_{D}$, and $S_{2}={\partial\Omega}_{N}$ implies
$ \boldsymbol{\Psi}^{*}=\boldsymbol{\Phi}^{*}=\boldsymbol{0}$ on ${\partial\Omega}$ and thus
conditions \eqref{ch2eqcond}.

Hence, by item (i) the set $(p, \boldsymbol{v}, \boldsymbol{\psi}, \boldsymbol\varphi)$ solves also BDIE system \eqref{ch2M22}.

(ii-22) Let now
$(p, \boldsymbol{v}, \boldsymbol{\psi}, \boldsymbol\varphi)\in 
L_{2}(\Omega)\times \bs{H}^{1}(\Omega) \times\widetilde{\boldsymbol{H}}^{-1/2}({\partial\Omega}_{D})\times\widetilde{\boldsymbol{H}}^{1/2}({\partial\Omega}_{N})$  
solve  BDIE system \eqref{ch2M22}.
Then equations \eqref{ch2M22p}, \eqref{ch2M22v} and Theorems \ref{ch2thmUR:theo}, \ref{ch2T3} imply that 
$(p, \boldsymbol{v}, \boldsymbol{\psi}, \boldsymbol\varphi)\in 
\bs{H}^{1,0}(\Omega;\boldsymbol{\mathcal{A}}) \times\widetilde{\boldsymbol{H}}^{-1/2}({\partial\Omega}_{D})
\times\widetilde{\boldsymbol{H}}^{1/2}({\partial\Omega}_{N})$ and the canonical conormal derivative $\boldsymbol{T}^+(p,\bs{v})$ is well defined. 
Applying Lemma \ref{ch2L1} with $\boldsymbol{\boldsymbol{\Psi}}=\boldsymbol{\psi} + \boldsymbol{\boldsymbol{\Psi}}_{0}$ and $\boldsymbol{\boldsymbol{\Phi}} = \boldsymbol{\varphi} + \boldsymbol{\boldsymbol{\Phi}}_{0}$ to BDIEs \eqref{ch2M22p}-\eqref{ch2M22v}, we deduce that the couple $(p, \bs{v})$ solves PDE system \eqref{ch2BVP1}-\eqref{ch2BVPdiv} and
\begin{align}\label{ch2M22VW*}
\Pi^s(\boldsymbol{\boldsymbol{\Psi}}^{*}) -\Pi^d(\boldsymbol{\boldsymbol{\Phi}}^{*}) = {0},\quad
\quad \boldsymbol{V}(\boldsymbol{\boldsymbol{\Psi}}^{*}) -\boldsymbol{W}(\boldsymbol{\boldsymbol{\Phi}}^{*}) = \boldsymbol{0},\quad \text{in } \Omega,
\end{align}
where
\begin{align}\label{ch2M22L2cond}
\boldsymbol{\Psi}^{*}&:=\boldsymbol{\psi} + \boldsymbol{\Psi}_{0} - \boldsymbol{T}^{+}(p,\bs{v}),&
\boldsymbol{\Phi}^{*}&:=\boldsymbol{\varphi} + \boldsymbol{\Phi}_{0} - \gamma^{+}\boldsymbol{v},\quad\quad on \quad {\partial\Omega}.
\end{align}

Taking the traction of \eqref{ch2M22p} and \eqref{ch2M22v} restricted to ${\partial\Omega}_{D}$ and subtracting it from \eqref{ch2M22D} we get
\begin{equation}\label{ch2M22eqrsd}
r_{{\partial\Omega}_{D}}\boldsymbol{T}^+(p,\bs{v})- r_{{\partial\Omega}_{D}}\boldsymbol{\boldsymbol{\Psi}}_{0}=\boldsymbol{\psi}
,\quad on\quad {\partial\Omega}_{D}.
\end{equation}
Taking the trace of \eqref{ch2M22v} restricted to ${\partial\Omega}_{N}$ and subtracting it from \eqref{ch2M22N} we get
\begin{equation}\label{ch2M22eqrsn}
r_{{\partial\Omega}_{N}}\gamma^{+}\boldsymbol{v} -r_{{\partial\Omega}_{N}}\boldsymbol{\boldsymbol{\Phi}}_{0}=\boldsymbol{\varphi},\quad on\quad {\partial\Omega}_{N}.
\end{equation}

Due to \eqref{ch2M22eqrsd} and \eqref{ch2M22eqrsn}, we have $\boldsymbol{\Psi}^{*}\in \widetilde{\boldsymbol{H}}^{-1/2}({\partial\Omega}_{D})$ and $\boldsymbol{\Phi}^{*}\in \widetilde{\boldsymbol{H}}^{1/2}({\partial\Omega}_{N})$. Now, we can apply Lemma \ref{ch2lemma2} with $S_{1}={\partial\Omega}_{D}$ and $S_{2}={\partial\Omega}_{N}$, to obtain $\boldsymbol{\Psi}^{*}=\boldsymbol{\Phi}^{*}=\boldsymbol{0}$ on ${\partial\Omega}$, which by \eqref{ch2M22L2cond} imply relations \eqref{ch2eqcond}.  
Since $r_{\partial\Omega_{D}}\bs{\Phi}_{0}=\bs{\phi}_{0}$ and $r_{\partial\Omega_{N}}\bs{\Psi}_{0}=\bs{\psi}_{0}$, relations  \eqref{ch2eqcond} imply the BVP boundary conditions \eqref{ch2BVPD} and \eqref{ch2BVPD}.

(iii) Finally, the unique solvability of the BDIE systems \eqref{ch2M11} and \eqref{ch2M22} in item (iii) follows from the unique solvability of the BVP, see Theorem \ref{ch2BVPUS}, and items (i) and (ii). 
\end{proof}

\subsection{Boundary Integral equations}

When $\mu\equiv 1$, the operator $\boldsymbol{\mathcal{A}}$ becomes $\mathring{\boldsymbol{\mathcal{A}}}$ and $\boldsymbol{\mathcal{R}}=\boldsymbol{\mathcal{R}}^{\bullet}\equiv 0$. 
Consequently, the boundary-domain integral equations system \eqref{ch2M11} can be split into a system of two vector boundary integral equations, 
\begin{align}
r_{{\partial\Omega}_{D}}(-\mathring{\boldsymbol{\mathcal V}}\psi
+\mathring{\boldsymbol{\mathcal W}}\varphi)&=
r_{{\partial\Omega}_{D}}\boldsymbol{\gamma}^{+}\boldsymbol{F}-\boldsymbol{\varphi}_{0},\quad\text{on } {\partial\Omega}_{D}, \label{ch1BIE1}\\
r_{{\partial\Omega}_{N}}(-\mathring{\boldsymbol{\mathcal W}}{}'\psi
+\mathring{\boldsymbol{\mathcal L}}\boldsymbol{\varphi})&=
r_{{\partial\Omega}_{N}}\boldsymbol{T}^{+}( F_{0}, \boldsymbol{F})-\boldsymbol{\psi}_{0}, \quad\text{on } {\partial\Omega}_{N},\label{ch1BIE2}
\end{align}
and two integral representations, for $p$ and $\bs{v}$,
\begin{align}
p&=F_{0}+\mathring{\Pi}^s\bs{\psi}-\mathring{\Pi}^d\boldsymbol{\varphi}\mbox{ in }\Omega,\label{ch23rdp}\\
\bs{v}&=\boldsymbol{F}+\boldsymbol{\mathring{V}}\bs{\psi}-\boldsymbol{\mathring{W}}\boldsymbol{\varphi}
\mbox{ in }\Omega,\label{ch23rdv}
\end{align}
where $F_{0}$ and $\bs{F}$ are given by \eqref{ch2F0F}. 

Similarly, the boundary-domain integral equations system \eqref{ch2M22} can be split into a system of two vector boundary integral equations, 
for $\bs{\psi}$ and $\boldsymbol{\varphi}$,
\begin{align}
r_{{\partial\Omega}_{D}}\left(\dfrac{1}{2}\bs{\psi}-\bs{\mathring{\mathcal{W'}}}\bs{\psi} + \bs{\mathring{\mathcal{L}}}\boldsymbol{\varphi}\right)=r_{{\partial\Omega}_{D}}\boldsymbol{T}^{+}(F_{0},\boldsymbol{F})-r_{{\partial\Omega}_{D}}\boldsymbol{\Psi}_{0}\mbox{ on }{\partial\Omega}_{D},\label{ch2BIE1}\\
r_{{\partial\Omega}_{N}}\left(\dfrac{1}{2}\boldsymbol{\varphi}-\mathring{\mathcal{V}}\bs{\psi} + \bs{\mathring{\mathcal{W}}}\boldsymbol{\varphi}\right)=r_{{\partial\Omega}_{N}}\bs{\gamma}^{+}\boldsymbol{F}-r_{{\partial\Omega}_{N}}\boldsymbol{\Phi}_{0}\mbox{ on }{\partial\Omega}_{N}\label{ch2BIE2}
\end{align}
and two integral representations, \eqref{ch23rdp} and \eqref{ch23rdv}, for $p$ and $\bs{v}$.

Equivalence Theorem \ref{ch2thEQm11} for BDIE system \eqref{ch2thEQm11} leads to the following equivalence assertion for the constant coefficient case.
\begin{cor}\label{ch2corBIE} Let $\mu\equiv 1$, $\bs{f}\in \bs{L}^{2}(\Omega)$ and $g\in L^{2}(\Omega)$. Moreover, let $\boldsymbol{\Phi}_{0}\in \boldsymbol{H}^{1/2}({\partial\Omega})$ and $\boldsymbol{\Psi}_{0}\in \boldsymbol{H}^{-1/2}({\partial\Omega})$ be some extensions of $\boldsymbol{\varphi}_{0}\in \boldsymbol{H}^{1/2}({\partial\Omega}_{D})$ and $\bs{\psi}_{0}\in \boldsymbol{H}^{-1/2}({\partial\Omega}_{N})$, respectively.
\begin{enumerate}
\item[(i)] If a couple $(p, \bs{v})\in L_{2}(\Omega)\times \bs{H}^{1}(\Omega)$ solves BVP \eqref{ch2BVPM}, then the solution is unique, the couple $(\bs{\psi}, \boldsymbol{\varphi})\in \widetilde{\boldsymbol{H}}^{-1/2}({\partial\Omega}_{D})\times \widetilde{\boldsymbol{H}}^{1/2}({\partial\Omega}_{N})$  given by
\begin{equation}\label{ch2eqcond2}
\boldsymbol{\varphi}=\gamma^{+}\boldsymbol{v}-\boldsymbol{\Phi}_{0}, \quad
\boldsymbol{\psi}=\boldsymbol{T}^{+}(p,\bs{v})-\boldsymbol\Psi{}_{0}\quad\text{on } {\partial\Omega},
\end{equation}
solves BIE systems \eqref{ch1BIE1}-\eqref{ch1BIE2} and \eqref{ch2BIE1}-\eqref{ch2BIE2}, and the couple $(p,\bs{v})$ satisfies \eqref{ch23rdp}, \eqref{ch23rdv}.
 
\item[(ii)] If a couple $(\bs{\psi}, \boldsymbol{\varphi})\in \widetilde{\boldsymbol{H}}^{-1/2}({\partial\Omega}_{D})\times \widetilde{\boldsymbol{H}}^{1/2}({\partial\Omega}_{N})$ solves one of BIE system, \eqref{ch1BIE1}-\eqref{ch1BIE2} or \eqref{ch2BIE1}-\eqref{ch2BIE2}, then it solves the other BDIE system, the couple $(p,\bs{v})$ given by \eqref{ch23rdp}-\eqref{ch23rdv} belongs to $\bs{H}^{1,0}(\Omega;\boldsymbol{\mathcal{A}})$ and solves BVP \eqref{ch2BVPM}, while $\boldsymbol{\psi}, \boldsymbol{\varphi}$ satisfy relations \eqref{ch2eqcond2}. 

(iii) Both systems \eqref{ch1BIE1}-\eqref{ch1BIE2} and \eqref{ch2BIE1}-\eqref{ch2BIE2} are uniquely solvable for  $(\bs{\psi}, \boldsymbol{\varphi})\in\widetilde{\boldsymbol{H}}^{-1/2}({\partial\Omega}_{D})\times \widetilde{\boldsymbol{H}}^{1/2}({\partial\Omega}_{N})$.
\end{enumerate}
\end{cor}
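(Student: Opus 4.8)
The plan is to read off Corollary~\ref{ch2corBIE} from Equivalence Theorem~\ref{ch2thEQm11}, using the decoupling that occurs for $\mu\equiv 1$. In that case $\boldsymbol{\mathcal R}=\mathcal R^{\bullet}\equiv 0$, and $\Pi^s=\mathring\Pi^s$, $\Pi^d=\mathring\Pi^d$, $\bs V=\mathring{\bs V}$, $\bs W=\mathring{\bs W}$, $\boldsymbol{\mathcal V}=\mathring{\boldsymbol{\mathcal V}}$, $\boldsymbol{\mathcal W}=\mathring{\boldsymbol{\mathcal W}}$, $\boldsymbol{\mathcal W}'=\mathring{\boldsymbol{\mathcal W}}{}'$, $\boldsymbol{\mathcal L}^{+}=\mathring{\boldsymbol{\mathcal L}}$; hence the first two equations \eqref{ch2M11p}--\eqref{ch2M11v} of system~\eqref{ch2M11} become verbatim the integral representations \eqref{ch23rdp}--\eqref{ch23rdv}, while the last two equations \eqref{ch2M11G}--\eqref{ch2M11T} become precisely the boundary integral system \eqref{ch1BIE1}--\eqref{ch1BIE2}; the same splitting turns \eqref{ch2M22} into \eqref{ch23rdp}--\eqref{ch23rdv} together with \eqref{ch2BIE1}--\eqref{ch2BIE2}. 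Thus the assignment $(\bs\psi,\bs\varphi)\mapsto(p,\bs v,\bs\psi,\bs\varphi)$, with $(p,\bs v)$ defined by \eqref{ch23rdp}--\eqref{ch23rdv}, is a bijection between solutions of each BIE system and solutions of the associated BDIE system, and the three claims of the corollary will be obtained by transporting the corresponding claims of Theorem~\ref{ch2thEQm11} across this bijection.

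For part~(i) I would take a solution $(p,\bs v)\in L_2(\Omega)\times\bs H^1(\Omega)$ of the mixed BVP~\eqref{ch2BVPM}; since $\bs f\in\bs L_2(\Omega)$ one has $(p,\bs v)\in\bs H^{1,0}(\Omega;\boldsymbol{\mathcal A})$, uniqueness is Theorem~\ref{ch2BVPUS}, and Theorem~\ref{ch2thEQm11}(i) gives that the quadruple with $\bs\psi,\bs\varphi$ defined by \eqref{ch2eqcond2} solves both \eqref{ch2M11} and \eqref{ch2M22}; restricting attention to their boundary equations and their domain equations yields \eqref{ch1BIE1}--\eqref{ch1BIE2}, \eqref{ch2BIE1}--\eqref{ch2BIE2} and the representations \eqref{ch23rdp}--\eqref{ch23rdv}. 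For part~(ii), given $(\bs\psi,\bs\varphi)\in\widetilde{\bs H}^{-1/2}({\partial\Omega}_D)\times\widetilde{\bs H}^{1/2}({\partial\Omega}_N)$ solving, say, \eqref{ch1BIE1}--\eqref{ch1BIE2}, I would set $(p,\bs v)$ by \eqref{ch23rdp}--\eqref{ch23rdv}; then $(p,\bs v,\bs\psi,\bs\varphi)$ satisfies all four equations of \eqref{ch2M11}, so Theorem~\ref{ch2thEQm11}(ii) applies and shows that it also solves \eqref{ch2M22}, that $(p,\bs v)\in\bs H^{1,0}(\Omega;\boldsymbol{\mathcal A})$ solves the BVP, and that \eqref{ch2eqcond}, i.e.\ \eqref{ch2eqcond2}, holds; splitting \eqref{ch2M22} then gives that $(\bs\psi,\bs\varphi)$ solves \eqref{ch2BIE1}--\eqref{ch2BIE2} while the domain part reproduces \eqref{ch23rdp}--\eqref{ch23rdv}. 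The case that starts from \eqref{ch2BIE1}--\eqref{ch2BIE2} is symmetric. Part~(iii) follows at once: the bijection above carries the unique solvability of \eqref{ch2M11} and \eqref{ch2M22}, Theorem~\ref{ch2thEQm11}(iii), onto unique solvability of \eqref{ch1BIE1}--\eqref{ch1BIE2} and \eqref{ch2BIE1}--\eqref{ch2BIE2}.

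The only point that requires genuine verification, and it is routine, is that the pair $(p,\bs v)$ built from the representation formulas \eqref{ch23rdp}--\eqref{ch23rdv} indeed has the regularity needed to invoke Theorem~\ref{ch2thEQm11}, namely $(p,\bs v)\in L_2(\Omega)\times\bs H^1(\Omega)$ and in fact $(p,\bs v)\in\bs H^{1,0}(\Omega;\boldsymbol{\mathcal A})$. This is immediate from the continuity of the parametrix-based volume and surface potentials established in Theorems~\ref{ch2thmUR:theo} and \ref{ch2T3}, in particular the mapping properties \eqref{ch2H10QU}, \eqref{ch2H10IQ} and \eqref{ch2H10PiW} applied to the data terms $F_0$ and $\bs F$ from \eqref{ch2F0F} and to the single- and double-layer contributions $\mathring\Pi^s\bs\psi$, $\mathring\Pi^d\bs\varphi$, $\mathring{\bs V}\bs\psi$, $\mathring{\bs W}\bs\varphi$. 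No further difficulty is expected, since the whole corollary is merely the $\mu\equiv 1$ specialization of statements already proved for the variable-coefficient BDIE systems.
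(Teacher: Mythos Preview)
Your proposal is correct and follows exactly the approach indicated by the paper: the corollary is stated there as a direct consequence of Equivalence Theorem~\ref{ch2thEQm11} specialized to $\mu\equiv 1$, with no separate proof given beyond the observation (made just before the corollary) that in this case $\boldsymbol{\mathcal R}=\mathcal R^\bullet\equiv 0$ so that each BDIE system splits into the BIE system plus the representation formulas. Your write-up simply makes this splitting and the resulting bijection explicit, which is precisely what the paper leaves implicit.
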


\section{BDIE Operators Invertibility}
\subsection{Operators $\mathcal{M}_{*}^{11}$ and $\mathcal{M}^{11}$}
BDIE system \eqref{ch2M11} can be written using matrix notation as
\begin{equation}\label{ch2MXF}
\mathcal{M}_{*}^{11}\mathcal{X}=\mathcal{F}_{*}^{11},
\end{equation}
where 
\begin{align}\label{M11*def}
   \mathcal{M}_{*}^{11}=
  \left[ {\begin{array}{cccc}
  I & \mathcal{R}^{\bullet} &- {\Pi^s} & {\Pi^d}\\
  \bs{0}&  \bs{I}+\boldsymbol{\mathcal{R}} & -\boldsymbol{V} & \boldsymbol{W} \\
   \bs{0}& r_{{\partial\Omega}_{D}}\boldsymbol{\gamma^{+}\mathcal{R}} & -r_{{\partial\Omega}_{D}}\boldsymbol{\mathcal{V}} & r_{{\partial\Omega}_{D}}\boldsymbol{\mathcal{W}}\\
  \bs{0} & r_{{\partial\Omega}_{N}}\boldsymbol{T}^{+}( \mathcal{R}^{\bullet},  \boldsymbol{\mathcal{R}})& -r_{{\partial\Omega}_{N}}\boldsymbol{\mathcal{W'}} & r_{{\partial\Omega}_{N}}\boldsymbol{\mathcal{L}}^+
  \end{array} } \right]
\end{align}
and 
$
\mathcal{X}=(p,\boldsymbol{v},\boldsymbol{\psi},\boldsymbol{\varphi}).
$
By Theorems~\ref{ch2thmUR:theo}-\ref{ch2T3cal} the mapping properties of the operators involved in the matrix imply continuity of the operator 
\begin{multline}\label{M*nar}
\mathcal{M}_{*}^{11}: \bs{H}^{1,0}(\Omega,\boldsymbol{\mathcal{A}})\times\widetilde{\boldsymbol{H}}^{-1/2}({\partial\Omega}_{D})\times\widetilde{\boldsymbol{H}}^{1/2}({\partial\Omega}_{N}) \\
\to \bs{H}^{1,0}(\Omega,\boldsymbol{\mathcal{A}})\times \boldsymbol{H}^{1/2}({\partial\Omega}_{D})\times \boldsymbol{H}^{-1/2}({\partial\Omega}_{N}).
\end{multline}

We can also consider the operator $\mathcal{M}_{*}^{11}$, defined by \eqref{M11*def}, in wider spaces,
\begin{multline}
\mathcal{M}_{*}^{11}: L_{2}(\Omega)\times \bs{H}^{1}(\Omega)\times\widetilde{\boldsymbol{H}}^{-1/2}({\partial\Omega}_{D})\times\widetilde{\boldsymbol{H}}^{1/2}({\partial\Omega}_{N})\\
\to L_{2}(\Omega)\times \bs{H}^{1}(\Omega)\times \boldsymbol{H}^{1/2}({\partial\Omega}_{D})\times \boldsymbol{H}^{-1/2}({\partial\Omega}_{N})
\label{ch2M11-inv}
\end{multline}
Theorems~\ref{ch2thmUR:theo}-\ref{ch2T3cal} imply that operator \eqref{ch2M11-inv} is also continuous.

Let us also write BIE system \eqref{ch1BIE1}-\eqref{ch1BIE2}, for $\mu\equiv 1$,  in the matrix form as
\begin{equation}\label{ch2M11CC}
\mathring{\mathcal{M}}^{11}\mathring{\mathcal{X}}=\mathring{\mathcal{F}}^{11},
\end{equation}
where $\mathring{\mathcal{X}}=(\boldsymbol{\psi} , \boldsymbol{\varphi})$,
$
  \mathring{\mathcal{F}}^{11}=
  \left[ 
r_{{\partial\Omega}_{N}}\boldsymbol{T}^{+}( F_{0}, 
\bs{F})-\boldsymbol{\psi}_{0}
\right]
$
and
\begin{equation}\label{ch2M11CC1}
  \mathring{\mathcal{M}}^{11}=
  \left[ {\begin{array}{cc}
   -r_{{\partial\Omega}_{D}}\mathring{\boldsymbol{\mathcal V}} & r_{{\partial\Omega}_{D}}\mathring{\boldsymbol{\mathcal W}}\\ 
   -r_{{\partial\Omega}_{N}}\mathring{\boldsymbol{\mathcal W}}{}' & r_{{\partial\Omega}_{N}}\mathring{\boldsymbol{\mathcal L}}  \\
  \end{array} } \right].
\end{equation} 
The following assertion is implied by \cite[Theorem 3.10]{kohr1}.
\begin{theorem}\label{ch2thinvM11ring}
The operator 
\begin{align}\label{M11ringop}
\mathring{\mathcal M}^{11}:\widetilde{\boldsymbol{H}}^{-1/2}({\partial\Omega}_{D})
\times\widetilde{\boldsymbol{H}}^{1/2}({\partial\Omega}_{N})
\to  \boldsymbol{H}^{1/2}({\partial\Omega}_{D})\times \boldsymbol{H}^{-1/2}({\partial\Omega}_{N})
\end{align} 
is continuous and continuously invertible.
\end{theorem}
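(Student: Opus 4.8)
The plan is to treat the two assertions separately: continuity is routine, while the continuous invertibility will be reduced to the known result \cite[Theorem 3.10]{kohr1}.

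\emph{Continuity.} The four entries of $\mathring{\mathcal M}^{11}$ in \eqref{ch2M11CC1} are restrictions to $\partial\Omega_D$ or $\partial\Omega_N$ of the constant-coefficient Stokes boundary operators $\mathring{\boldsymbol{\mathcal V}}$, $\mathring{\boldsymbol{\mathcal W}}$, $\mathring{\boldsymbol{\mathcal W}}'$, $\mathring{\boldsymbol{\mathcal L}}$. These map $\boldsymbol H^{s}(\partial\Omega)\to\boldsymbol H^{s+1}(\partial\Omega)$ (the first three) and $\boldsymbol H^{s}(\partial\Omega)\to\boldsymbol H^{s-1}(\partial\Omega)$ (the last), by the $\mu\equiv 1$ case of Theorem \ref{ch2T3cal} (see also \cite{kohr1,hsiao}). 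Composing with the embeddings $\widetilde{\boldsymbol H}^{-1/2}(\partial\Omega_D)\hookrightarrow\boldsymbol H^{-1/2}(\partial\Omega)$, $\widetilde{\boldsymbol H}^{1/2}(\partial\Omega_N)\hookrightarrow\boldsymbol H^{1/2}(\partial\Omega)$ and with the restriction operators $r_{\partial\Omega_D}$, $r_{\partial\Omega_N}$, I would obtain continuity of \eqref{M11ringop} immediately.

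\emph{Invertibility.} I would next observe that the BIE system \eqref{ch1BIE1}--\eqref{ch1BIE2}, whose matrix is $\mathring{\mathcal M}^{11}$, is precisely the direct, segregated, first-kind boundary integral system for the mixed Dirichlet--Neumann BVP for the classical \emph{incompressible} Stokes system with unit viscosity: $\mathring{\boldsymbol{\mathcal V}}$ is the Stokes single-layer operator, $\mathring{\boldsymbol{\mathcal W}}$ and $\mathring{\boldsymbol{\mathcal W}}'$ are the double-layer operator and its transpose, and $\mathring{\boldsymbol{\mathcal L}}$ is the hypersingular operator associated with $(\mathring\Pi^d,\mathring{\boldsymbol W})$. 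After reconciling the sign and normalisation conventions (those fixed by \eqref{F-u}) with those of \cite{kohr1} and checking that the energy spaces $\widetilde{\boldsymbol H}^{-1/2}(\partial\Omega_D)$, $\widetilde{\boldsymbol H}^{1/2}(\partial\Omega_N)$ coincide with the ones used there, the continuous invertibility of \eqref{M11ringop} follows from \cite[Theorem 3.10]{kohr1}. I stress that one cannot instead deduce this from the equivalence result (Theorem \ref{ch2thEQm11}) combined with the BVP uniqueness (Theorem \ref{ch2BVPUS}), since that route passes through Lemma \ref{ch2lemma2}, whose proof already invokes the present theorem; the citation of \cite{kohr1} must therefore be direct.

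\emph{Where the difficulty lies.} Within \cite[Theorem 3.10]{kohr1} the easy half is injectivity: a kernel element $(\boldsymbol\psi,\boldsymbol\varphi)$ generates, via representations \eqref{ch23rdp}--\eqref{ch23rdv} with vanishing data, a pair $(p,\boldsymbol v)\in\boldsymbol H^{1,0}(\Omega;\boldsymbol{\mathcal A})$ (with $\mu\equiv1$) solving the homogeneous mixed BVP \eqref{ch2BVPMh}, which vanishes by Theorem \ref{ch2BVPUS}, after which the jump relations of Theorem \ref{ch2jumps} force $\boldsymbol\psi=\boldsymbol 0$, $\boldsymbol\varphi=\boldsymbol 0$. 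The substantial part is surjectivity, i.e. the Fredholm property with index zero: it rests on the $\widetilde{\boldsymbol H}^{-1/2}(\partial\Omega_D)$-ellipticity of the Stokes single-layer operator $\mathring{\boldsymbol{\mathcal V}}$ and the $\widetilde{\boldsymbol H}^{1/2}(\partial\Omega_N)$-ellipticity of the hypersingular operator $\mathring{\boldsymbol{\mathcal L}}$ (the latter up to rigid motions, which are excluded here because $\partial\Omega_D\neq\emptyset$ carries Dirichlet data), together with the duality between $\mathring{\boldsymbol{\mathcal W}}$ and $\mathring{\boldsymbol{\mathcal W}}'$; a G{\aa}rding inequality for the bilinear form attached to $\mathring{\mathcal M}^{11}$ then yields Fredholmness of index zero, injectivity upgrades this to bijectivity, and the open mapping theorem gives continuity of the inverse. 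This ellipticity/Fredholm analysis for the non-symmetric mixed block system is the technical core, and it is the one step I would simply cite rather than reproduce.
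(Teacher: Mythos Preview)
Your proposal is correct and matches the paper's approach exactly: the paper does not give a proof at all but simply states that the assertion is implied by \cite[Theorem~3.10]{kohr1}, which is precisely what you cite for the invertibility. Your additional remarks on continuity (via Theorem~\ref{ch2T3cal}), on the circularity of the equivalence-theorem route through Lemma~\ref{ch2lemma2}, and the sketch of the ellipticity/Fredholm argument inside \cite{kohr1} are all accurate and go beyond what the paper itself provides.
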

Theorem~\ref{ch2thinvM11ring} will be instrumental in proving the following result.
\begin{theorem}\label{ch2thinvM11}
Operators \eqref{M*nar} and \eqref{ch2M11-inv}
are continuously invertible.
\end{theorem}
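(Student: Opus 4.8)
The strategy is the standard one for establishing invertibility of a BDIE system from the invertibility of its constant-coefficient boundary-only counterpart plus compactness of the remainder terms, combined with uniqueness that we already have from the equivalence theorem. The plan is to show that the operator $\mathcal M_*^{11}$ is a compact perturbation of an operator that is easily seen to be invertible, deduce that $\mathcal M_*^{11}$ is Fredholm of index zero, and then use injectivity (from Theorem \ref{ch2thEQm11}(iii)) to conclude invertibility.

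First I would split off the trivial triangular block structure. Looking at \eqref{M11*def}, the first column contains only the identity $I$ in the $(1,1)$ slot and zeros below; thus the pressure $p$ appears only in the first scalar equation, and it appears there with coefficient $I$. Hence $\mathcal M_*^{11}$ is block-triangular: the lower-right $3\times 3$ block acting on $(\bs v,\bs\psi,\bs\varphi)$ can be treated independently, and once that block is inverted, the first equation recovers $p$ explicitly via $p = F_0 - \mathcal R^\bullet\bs v + \Pi^s\bs\psi - \Pi^d\bs\varphi$ (the operators $\mathcal R^\bullet$, $\Pi^s$, $\Pi^d$ being continuous into $H^{s-1}(\Omega)$, $L_2(\Omega)$ by Theorems~\ref{ch2thmUR:theo}, \ref{ch2T3}). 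So it suffices to prove invertibility of the $3\times 3$ operator matrix in the variables $(\bs v,\bs\psi,\bs\varphi)$.

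Next I would write this $3\times 3$ matrix operator as $\mathcal M_0 + \mathcal K$, where $\mathcal M_0$ collects the ``principal'' parts and $\mathcal K$ the compact parts. The natural choice is to put into $\mathcal M_0$ the block-diagonal-in-$\bs v$ identity $\bs I$ from the second row together with the boundary operators $-r_{\partial\Omega_D}\mathring{\bs{\mathcal V}}$, $r_{\partial\Omega_D}\mathring{\bs{\mathcal W}}$ in the third row and $-r_{\partial\Omega_N}\mathring{\bs{\mathcal W}}'$, $r_{\partial\Omega_N}\mathring{\bs{\mathcal L}}$ in the fourth row — i.e.\ the constant-coefficient boundary operators whose invertibility (on the $(\bs\psi,\bs\varphi)$ block) is Theorem~\ref{ch2thinvM11ring}. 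The remainder $\mathcal K$ then contains $\bs{\mathcal R}$, $\bs T^+(\mathcal R^\bullet,\bs{\mathcal R})$, $\gamma^+\bs{\mathcal R}$, the differences $\bs{\mathcal V}-\mathring{\bs{\mathcal V}}$, $\bs{\mathcal W}-\mathring{\bs{\mathcal W}}$, $\bs{\mathcal W}'-\mathring{\bs{\mathcal W}}'$, $\bs{\mathcal L}^+-\mathring{\bs{\mathcal L}}$, together with the off-diagonal terms $-\bs V$, $\bs W$ that map the boundary unknowns into the domain. The operators $\bs{\mathcal R}$, $\mathcal R^\bullet$, $\gamma^+\bs{\mathcal R}$, $\bs T^\pm(\mathcal R^\bullet,\bs{\mathcal R})$ are compact by Theorem~\ref{ch2thRcomp}; the differences of the surface operators are compact by Corollary~\ref{ch2Lcompact} and the smoothing properties in Theorem~\ref{ch2T3cal} (using relations \eqref{ch2relationVWcal}--\eqref{ch2relationTV}, where the correction terms involve $\mathring{\bs{\mathcal V}}$ which gains a derivative and hence composes with a bounded multiplication to land one order smoother, giving compactness via Rellich); and the terms $-\bs V$, $\bs W$ are compact because, restricted to $\Omega$ with target $\bs H^1(\Omega)$, they are continuous into $\bs H^{s+3/2}$, $\bs H^{s+1/2}$ by Theorem~\ref{ch2T3} and then composed with the compact embedding — but here one must be slightly careful since these are genuinely nonzero off-diagonal blocks rather than corrections; the cleaner bookkeeping is to instead incorporate $-\bs V$ and $\bs W$ into $\mathcal M_0$ as well, so that $\mathcal M_0$ is itself upper-block-triangular (first recover $\bs v$ from the second row modulo compact terms, then the boundary system), and verify that this $\mathcal M_0$ is invertible directly. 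I would verify invertibility of $\mathcal M_0$ by this triangular elimination: the $\bs v$-equation reads $\bs v = \bs F + \bs V\bs\psi - \bs W\bs\varphi$ up to the compact $\bs{\mathcal R}\bs v$, so substituting into the two boundary equations yields exactly the constant-coefficient boundary system $\mathring{\mathcal M}^{11}$ acting on $(\bs\psi,\bs\varphi)$ modulo compact terms, which is invertible by Theorem~\ref{ch2thinvM11ring}.

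Having written $\mathcal M_*^{11} = \mathcal M_0 + \mathcal K$ with $\mathcal M_0$ invertible and $\mathcal K$ compact (in both the narrower space \eqref{M*nar} and the wider space \eqref{ch2M11-inv} — the same compactness arguments apply verbatim since all the relevant Rellich embeddings are available in both settings, as in Theorems~\ref{ch2thRcomp} and \ref{ch2T3cal}), it follows that $\mathcal M_*^{11}$ is Fredholm with index zero. By Theorem~\ref{ch2thEQm11}(iii) the system $\mathcal M_*^{11}\mathcal X = \mathcal F_*^{11}$ has at most one solution in the narrower space, so $\mathcal M_*^{11}$ is injective on \eqref{M*nar}; a Fredholm operator of index zero that is injective is invertible, proving the claim for \eqref{M*nar}. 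For the wider operator \eqref{ch2M11-inv}, injectivity again follows from Theorem~\ref{ch2thEQm11}(iii) (whose uniqueness statement is already phrased in the wider space $L_2(\Omega)\times\bs H^1(\Omega)\times\cdots$), and the same Fredholm-index-zero argument applies, giving continuous invertibility there as well. The main obstacle — and the step requiring the most care — is the compactness of $\mathcal K$: specifically, confirming that every correction term arising from the variable coefficient $\mu$ (through \eqref{ch2relationVWcal}--\eqref{ch2relationTV}, \eqref{ch2jumpL}, \eqref{ch2relationR}, \eqref{ch2relationRdot}) genuinely gains smoothness and hence is compact by Rellich, and checking that the off-diagonal domain-valued blocks $-\bs V$, $\bs W$ are handled correctly — either by absorbing them into an invertible triangular $\mathcal M_0$ as above, or by noting that their composition with the trace/traction maps in the remaining rows still produces compact operators. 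Once this bookkeeping is done, the Fredholm-plus-injectivity argument is routine.
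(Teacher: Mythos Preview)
Your overall strategy for the wider operator \eqref{ch2M11-inv} is essentially the paper's: write $\mathcal{M}_*^{11}$ as an invertible triangular operator (whose bottom-right $2\times 2$ block reduces, via multiplication by $\mu$ and relations \eqref{ch2relationVWcal}, \eqref{ch2relationL}, to the constant-coefficient system $\mathring{\mathcal{M}}^{11}$) plus a compact perturbation coming from $\boldsymbol{\mathcal{R}}$, $\gamma^+\boldsymbol{\mathcal{R}}$, $\boldsymbol{T}^+(\mathcal{R}^\bullet,\boldsymbol{\mathcal{R}})$, $\boldsymbol{\mathcal{W}}'-\mathring{\boldsymbol{\mathcal{W}}}'$ and $\boldsymbol{\mathcal{L}}^+-\widehat{\boldsymbol{\mathcal{L}}}$; then Fredholm index zero plus injectivity from Theorem~\ref{ch2thEQm11}(iii) finishes. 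That part is fine and matches the paper.

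The gap is in your treatment of the narrower operator \eqref{M*nar}. You assert that ``the same compactness arguments apply verbatim since all the relevant Rellich embeddings are available in both settings'', but the narrow codomain involves $\bs{H}^{1,0}(\Omega,\boldsymbol{\mathcal{A}})$, which is a graph-norm space: its norm contains the term $\|\boldsymbol{\mathcal{A}}(p,\bs v)\|_{\bs L_2}$, and that piece does \emph{not} improve under Rellich. Concretely, for the difference operator to be compact into $\bs{H}^{1,0}(\Omega,\boldsymbol{\mathcal{A}})$ you would need, e.g., the map $\bs v\mapsto(0,\boldsymbol{\mathcal{R}}\bs v)$ to be compact from $\bs H^1(\Omega)$ into $\bs{H}^{1,0}(\Omega,\boldsymbol{\mathcal{A}})$. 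Theorem~\ref{ch2thmUR:theo} gives $\boldsymbol{\mathcal{R}}\bs v\in\bs H^2(\Omega)$, but a bounded sequence in $H^1\times\bs H^2$ need not have a subsequence for which $\boldsymbol{\mathcal{A}}(p_n,\bs v_n)$ converges in $\bs L_2$, so the embedding $H^1\times\bs H^2\hookrightarrow\bs{H}^{1,0}(\Omega,\boldsymbol{\mathcal{A}})$ is not compact. Hence the Fredholm argument does not go through ``verbatim'' in the narrow space.

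The paper handles this differently: it runs the Fredholm-plus-injectivity argument \emph{only} in the wider space $L_2(\Omega)\times\bs H^1(\Omega)\times\cdots$, obtains the continuous inverse there, and then bootstraps to the narrower space by a regularity argument. Namely, if $\mathcal{F}_*^{11}\in\bs{H}^{1,0}(\Omega,\boldsymbol{\mathcal{A}})\times\bs H^{1/2}(\partial\Omega_D)\times\bs H^{-1/2}(\partial\Omega_N)$ and $\mathcal{X}=(\mathcal{M}_*^{11})^{-1}\mathcal{F}_*^{11}$ is the solution in the wide space, then the first two equations of \eqref{ch2M11} together with the mapping properties in Theorems~\ref{ch2thmUR:theo} and \ref{ch2T3} force $(p,\bs v)\in\bs{H}^{1,0}(\Omega,\boldsymbol{\mathcal{A}})$, so the same inverse restricts to a continuous inverse on the narrow space. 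You should replace your direct Fredholm argument for \eqref{M*nar} with this bootstrap.
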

\begin{proof}
%
(i) Let us start from operator \eqref{ch2M11-inv}. 
To this end let us define the operator
\[
   \widetilde{\mathcal{M}}^{11}=
  \left[ {\begin{array}{cccc}
  I & \mathcal{R}^{\bullet} &-{\Pi^s} & {\Pi^d}\\
  \bs{0} & \bs{I} & -\bs{V} & \bs{W} \\
   \bs{0} & \bs{0} & -r_{{\partial\Omega}_{D}}\bs{\mathcal{V}} & r_{{\partial\Omega}_{D}}\bs{\mathcal{W}}\\
  \bs{0} & \bs{0} & -r_{{\partial\Omega}_{N}}\bs{\mathring{\mathcal{W'}}} & r_{{\partial\Omega}_{N}}\bs{\widehat{\mathcal{L}}}
  \end{array} } \right], 
\]
and consider the new system 
\begin{equation}\label{ch2M11-tilde}
\widetilde{\mathcal{M}}^{11}\widetilde{\mathcal{X}} = \widetilde{\mathcal{F}}^{11}
\end{equation} where 
$\widetilde{\mathcal{X}} =[\tilde{p},\tilde{\bs{v}},\tilde{\bs{\phi}},\tilde{\bs{\psi}}]^{\top}\in 
L_{2}(\Omega)\times\bs{H}^{1}(\Omega)\times\widetilde{\boldsymbol{H}}^{-1/2}({\partial\Omega}_{D})
\times\widetilde{\boldsymbol{H}}^{1/2}({\partial\Omega}_{N})$ and\\ $\widetilde{\mathcal{F}}=[\widetilde{\mathcal{F}}^{11}_{1}, \widetilde{\mathcal{F}}^{11}_{2},\widetilde{\mathcal{F}}^{11}_{3},\widetilde{\mathcal{F}}^{11}_{4}]^{\top}\in L_{2}(\Omega)\times \bs{H}^{1}(\Omega)\times \boldsymbol{H}^{1/2}({\partial\Omega}_{D})
\times \boldsymbol{H}^{-1/2}({\partial\Omega}_{N})$. 

Consider now, the last two equations of the system \eqref{ch2M11-tilde},  
\begin{align}
-r_{{\partial\Omega}_{D}}\bs{\mathcal{V}}\tilde{\bs{\psi}} +r_{{\partial\Omega}_{D}}\bs{\mathcal{W}}\tilde{\bs{\phi}} &= \widetilde{\mathcal{F}}^{11}_{3},\label{ch2M11-1}\\
 -r_{{\partial\Omega}_{N}}\bs{\mathring{\mathcal{W'}}}\tilde{\bs{\psi}}  +r_{{\partial\Omega}_{N}}\bs{\widehat{\mathcal{L}}}\tilde{\bs{\phi}} &=\widetilde{\mathcal{F}}^{11}_{4}.\label{ch2M11-2}
\end{align}
Multiplying equation \eqref{ch2M11-1} by $\mu$ and applying relations \eqref{ch2relationVW} and \eqref{ch2relationL} we obtain \begin{align}
-r_{{\partial\Omega}_{D}}\bs{\mathring{\mathcal{V}}}\tilde{\bs{\psi}} +r_{{\partial\Omega}_{D}}\bs{\mathring{\mathcal{W}}}(\mu\tilde{\bs{\phi}}) &= \mu\widetilde{\mathcal{F}}^{11}_{3},\label{ch2M11-3}\\
 -r_{{\partial\Omega}_{N}}\bs{\mathring{\mathcal{W'}}}\tilde{\bs{\psi}}  +r_{{\partial\Omega}_{N}}\bs{\mathring{\mathcal{L}}}(\mu\tilde{\bs{\phi}}) &=\widetilde{\mathcal{F}}^{11}_{4}.\label{ch2M11-4}
\end{align}
This system is uniquely solvable for $\widetilde{\bs{\phi}}$ and $\widetilde{\bs{\psi}}$, since the matrix operator of the left hand side is invertible, cf. Theorem~\ref{ch2thinvM11ring}. 
Hence $\widetilde{\bs{v}}$ is uniquely determined from the second equation of the system \eqref{ch2M11-tilde} and thus also is $p$ from the first equation. This proves the invertibility of the operator $\widetilde{\mathcal{M}}^{11}$, which implies that $\widetilde{\mathcal{M}}^{11}$ is a Fredholm operator with zero index.

Furthermore, the operator 
\begin{multline*}
\mathcal{M}_{*}^{11}-\widetilde{\mathcal{M}}^{11}: L_{2}(\Omega)\times \bs{H}^{1}(\Omega)\times\widetilde{\boldsymbol{H}}^{-1/2}({\partial\Omega}_{D})\times\widetilde{\boldsymbol{H}}^{1/2}({\partial\Omega}_{N})\\
\to L_{2}(\Omega)\times \bs{H}^{1}(\Omega)\times \boldsymbol{H}^{1/2}({\partial\Omega}_{D})\times \boldsymbol{H}^{-1/2}({\partial\Omega}_{N})
\end{multline*}
is compact due to Theorems \ref{ch2thRcomp}, \ref{ch2T3cal} and \ref{ch2Lcompact}. Thus  operator \eqref{ch2M11-inv} is also a Fredholm operator with zero index. 
By virtue of the Equivalence Theorem \ref{ch2thEQm11} and Remark \ref{ch2remM11}, the homogeneous system (M11) has only the trivial solution, hence operator \eqref{ch2M11-inv} is invertible.  

(ii) Let us now consider operator \eqref{M*nar}.
Let  $\mathcal{X} = (\mathcal{M}^{11}_{*})^{-1}\mathcal{F}_{*}^{11}$ be the solution of system \eqref{ch2MXF} with an arbitrary right hand side $\mathcal{F}^{11}_{*}\in L^{2}(\Omega)\times \bs{H}^{1}(\Omega)\times \boldsymbol{H}^{1/2}({\partial\Omega}_{D})\times \boldsymbol{H}^{-1/2}({\partial\Omega}_{N})$, where 
\begin{multline*}
(\mathcal{M}^{11}_{*})^{-1}: L_{2}(\Omega)\times \bs{H}^{1}(\Omega)\times \boldsymbol{H}^{1/2}({\partial\Omega}_{D})\times \boldsymbol{H}^{-1/2}({\partial\Omega}_{N})\\
\to
L_{2}(\Omega)\times \bs{H}^{1}(\Omega)\times\widetilde{\boldsymbol{H}}^{-1/2}({\partial\Omega}_{D})\times\widetilde{\boldsymbol{H}}^{1/2}({\partial\Omega}_{N})
\end{multline*}
is the inverse of operator \eqref{ch2M11-inv}. 

If, moreover, $
\mathcal{F}^{11}_{*}\in \bs{H}^{1,0}(\Omega,\boldsymbol{\mathcal{A}})\times\boldsymbol{H}^{1/2}({\partial\Omega}_{D})\times 
 \boldsymbol{H}^{-1/2}({\partial\Omega}_{N}),
 $
then  the first two equations of system \eqref{ch2MXF} and the mapping properties of the operators in these equations imply that $\mathcal{X} \in \bs{H}^{1,0}(\Omega;\bs{\mathcal{A}})\times\widetilde{\boldsymbol{H}}^{-1/2}({\partial\Omega}_{D})\times\widetilde{\boldsymbol{H}}^{1/2}({\partial\Omega}_{N})$. 
Consequently, the operator
\begin{multline*}
(\mathcal{M}^{11}_{*})^{-1}: L_{2}(\Omega)\times \bs{H}^{1}(\Omega)\times \boldsymbol{H}^{1/2}({\partial\Omega}_{D})\times \boldsymbol{H}^{-1/2}({\partial\Omega}_{N})\\
\to
L_{2}(\Omega)\times \bs{H}^{1}(\Omega)\times\widetilde{\boldsymbol{H}}^{-1/2}({\partial\Omega}_{D})\times\widetilde{\boldsymbol{H}}^{1/2}({\partial\Omega}_{N})
\end{multline*}
is also continuous and is an inverse of operator \eqref{M*nar}. 
\end{proof}
}

Note that the BDIE system (M11$_*$) given by \eqref{ch2M11} can be split into the BDIE system (M11), of 3 vector equations \eqref{ch2M11v}, \eqref{ch2M11G}, \eqref{ch2M11T} for 3 vector unknowns, $\boldsymbol{v}$, $\boldsymbol{\psi}$ and $\boldsymbol{\varphi}$, and the scalar equation \eqref{ch2M11p} that can be used, after solving the system, to obtain the pressure, $p$. 
Using matrix notation,
\begin{equation}\label{ch2M11y}
\mathcal{M}^{11}(\boldsymbol{v},\boldsymbol{\psi},\boldsymbol{\varphi})^\top=\mathcal{F}^{11},
\end{equation}
where 
\begin{align}\label{M11}
   \mathcal{M}^{11}=
  \left[ {\begin{array}{ccc}
   \bs{I}+\boldsymbol{\mathcal{R}} & -\boldsymbol{V} & \boldsymbol{W} \\
   r_{{\partial\Omega}_{D}}\boldsymbol{\gamma^{+}\mathcal{R}} & -r_{{\partial\Omega}_{D}}\boldsymbol{\mathcal{V}} & r_{{\partial\Omega}_{D}}\boldsymbol{\mathcal{W}}\\
   r_{{\partial\Omega}_{N}}\boldsymbol{T}^{+}( \mathcal{R}^{\bullet},  \boldsymbol{\mathcal{R}})& -r_{{\partial\Omega}_{N}}\boldsymbol{\mathcal{W'}} & r_{{\partial\Omega}_{N}}\boldsymbol{\mathcal{L}}^+
  \end{array} } \right].
\end{align}

\begin{theorem}\label{ch2corinvM11}
The operator 
\begin{align}\label{M11op}
\mathcal{M}^{11}: \boldsymbol{H}^{1}(\Omega)\times\widetilde{\boldsymbol{H}}^{-1/2}({\partial\Omega}_{D})
\times\widetilde{\boldsymbol{H}}^{1/2}({\partial\Omega}_{N})
\to \boldsymbol{H}^{1}(\Omega)\times \boldsymbol{H}^{1/2}({\partial\Omega}_{D})\times \boldsymbol{H}^{-1/2}({\partial\Omega}_{N})
\end{align} 
is continuous and continuously invertible.
\end{theorem}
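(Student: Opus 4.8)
The plan is to derive the invertibility of $\mathcal{M}^{11}$ on the split spaces from the already-established invertibility of the full operator $\mathcal{M}^{11}_{*}$ on the narrow spaces (Theorem~\ref{ch2thinvM11}), exploiting the block-triangular structure of $\mathcal{M}^{11}_{*}$ in \eqref{M11*def}. The key observation is that the first column of $\mathcal{M}^{11}_{*}$ contains only the entry $I$ in the top-left slot and zeros below, so the unknown $p$ is determined by the other three unknowns through the scalar equation \eqref{ch2M11p}, while equations \eqref{ch2M11v}, \eqref{ch2M11G}, \eqref{ch2M11T}---which form precisely the system $\mathcal{M}^{11}(\bs{v},\bs{\psi},\bs{\varphi})^\top=\mathcal{F}^{11}$---do not involve $p$ at all.

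First I would record continuity of the operator \eqref{M11op}: this follows from Theorems~\ref{ch2thmUR:theo}--\ref{ch2T3cal} exactly as for \eqref{M*nar}, simply dropping the first row and column. Next I would prove injectivity. Suppose $(\bs{v},\bs{\psi},\bs{\varphi})$ lies in the kernel of \eqref{M11op}. Set $p:=-\mathcal{R}^{\bullet}\bs{v}+{\Pi^s}\bs{\psi}-{\Pi^d}\bs{\varphi}\in L_2(\Omega)$; then by construction the quadruple $(p,\bs{v},\bs{\psi},\bs{\varphi})$ satisfies $\mathcal{M}^{11}_{*}(p,\bs{v},\bs{\psi},\bs{\varphi})=\bs{0}$ with all four components, so $\mathcal{X}=(p,\bs{v},\bs{\psi},\bs{\varphi})$ is in the kernel of \eqref{M*nar} (note $(p,\bs{v})\in\bs{H}^{1,0}(\Omega;\bs{\mathcal{A}})$ by the mapping properties in Theorems~\ref{ch2thmUR:theo} and \ref{ch2T3}, so $\mathcal{X}$ lies in the narrow space where $\mathcal{M}^{11}_{*}$ acts). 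By Theorem~\ref{ch2thinvM11} this forces $\mathcal{X}=\bs{0}$, hence $(\bs{v},\bs{\psi},\bs{\varphi})=\bs{0}$, giving injectivity of \eqref{M11op}.

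For surjectivity, take an arbitrary right-hand side $(\bs{F}_2,\bs{F}_3,\bs{F}_4)\in\boldsymbol{H}^{1}(\Omega)\times\boldsymbol{H}^{1/2}({\partial\Omega}_{D})\times\boldsymbol{H}^{-1/2}({\partial\Omega}_{N})$. I would like to apply $(\mathcal{M}^{11}_{*})^{-1}$ to the augmented data $(F_0,\bs{F}_2,\bs{F}_3,\bs{F}_4)$ for a suitable $F_0$; but the inverse of \eqref{M*nar} requires $F_0$ together with $\bs{F}_2$ to form an element of $\bs{H}^{1,0}(\Omega;\bs{\mathcal{A}})$. A clean choice is $F_0:=\mathring{\mathcal{Q}}\big(\bs{\mathcal{A}}\text{-image}\big)$-type data; more simply, since $\bs{F}_2\in\bs{H}^1(\Omega)$ one can pick, e.g., $F_0\in L_2(\Omega)$ arbitrarily --- say $F_0=0$ --- but then one must use the \emph{wider}-space inverse \eqref{ch2M11-inv} rather than \eqref{M*nar}. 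Indeed applying the inverse of \eqref{ch2M11-inv} to $(0,\bs{F}_2,\bs{F}_3,\bs{F}_4)$ yields $(p,\bs{v},\bs{\psi},\bs{\varphi})\in L_2(\Omega)\times\boldsymbol{H}^{1}(\Omega)\times\widetilde{\boldsymbol{H}}^{-1/2}({\partial\Omega}_{D})\times\widetilde{\boldsymbol{H}}^{1/2}({\partial\Omega}_{N})$ with $\mathcal{M}^{11}_{*}(p,\bs{v},\bs{\psi},\bs{\varphi})=(0,\bs{F}_2,\bs{F}_3,\bs{F}_4)$; reading off rows $2$--$4$ shows $\mathcal{M}^{11}(\bs{v},\bs{\psi},\bs{\varphi})^\top=(\bs{F}_2,\bs{F}_3,\bs{F}_4)$, which proves surjectivity of \eqref{M11op}. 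Boundedness of the inverse follows from the open mapping theorem, or directly from the estimate $\|(\bs{v},\bs{\psi},\bs{\varphi})\|\le\|(p,\bs{v},\bs{\psi},\bs{\varphi})\|\le C\|(0,\bs{F}_2,\bs{F}_3,\bs{F}_4)\|=C\|(\bs{F}_2,\bs{F}_3,\bs{F}_4)\|$.

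The main obstacle is the bookkeeping of spaces in the surjectivity step: one must be careful that the full-operator inverse being invoked is the one on the \emph{wider} spaces \eqref{ch2M11-inv} (so that no compatibility of $F_0$ with $\bs{\mathcal{A}}$ is needed and the arbitrary $L_2$ choice $F_0=0$ is legitimate), while still landing in the tilde-spaces for $\bs{\psi},\bs{\varphi}$; Theorem~\ref{ch2thinvM11} provides exactly this. Apart from that, the argument is a routine block-triangular reduction --- the scalar $p$-equation decouples from the $(\bs{v},\bs{\psi},\bs{\varphi})$-subsystem in the sense that $p$ never feeds back into rows $2$--$4$, so invertibility of $\mathcal{M}^{11}_{*}$ and invertibility of $\mathcal{M}^{11}$ are equivalent.
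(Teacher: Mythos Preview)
Your argument is correct and arrives at the same conclusion as the paper, but the route differs in one respect worth noting. The paper first re-runs the compact perturbation argument from part~(i) of Theorem~\ref{ch2thinvM11} to establish that $\mathcal{M}^{11}$ is Fredholm of index zero; once that is in hand, proving either injectivity or surjectivity suffices (the paper in fact checks both, each by reduction to $\mathcal{M}^{11}_{*}$ exactly as you do). You bypass the Fredholm step entirely and prove injectivity and surjectivity directly from the block-triangular structure, then invoke the open mapping theorem for the bounded inverse. Your approach is slightly more elementary---no Fredholm machinery is needed---while the paper's approach keeps methodological uniformity with the rest of Section~7. The surjectivity and injectivity arguments themselves (augment by an arbitrary $F_0\in L_2(\Omega)$ and apply the wider-space inverse \eqref{ch2M11-inv}; respectively, lift a kernel element to the full system by defining $p$ via the first row) are identical in both proofs.
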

\begin{proof}
Operator \eqref{M11op} is continuous due to the mapping properties of the integral operators involved in \eqref{M11}. 

By the same arguments as in part (i) of the proof of Theorem~\ref{ch2thinvM11}, we obtain that operator \eqref{M11op} is Fredholm with zero index.
%
Complementing system \eqref{ch2M11y} with an arbitrary right hand side 
$\mathcal{F}^{11}\in \boldsymbol{H}^{1}(\Omega)\times \boldsymbol{H}^{1/2}({\partial\Omega}_{D})\times \boldsymbol{H}^{-1/2}({\partial\Omega}_{N})$ 
by equation \eqref{ch2M11p} with a right hand side $F_0\in L_{2}(\Omega)$, we arrive at system \eqref{ch2MXF} which is solvable in 
$L_{2}(\Omega)\times\bs{H}^{1}(\Omega)\times\widetilde{\boldsymbol{H}}^{-1/2}({\partial\Omega}_{D})
\times\widetilde{\boldsymbol{H}}^{1/2}({\partial\Omega}_{N})$,
due to Theorem~\ref{ch2thinvM11} for operator \eqref{ch2M11-inv}, and thus delivers a solution $(\boldsymbol{v},\boldsymbol{\psi},\boldsymbol{\varphi})\in
\boldsymbol{H}^{1}(\Omega)\times\widetilde{\boldsymbol{H}}^{-1/2}({\partial\Omega}_{D})
\times\widetilde{\boldsymbol{H}}^{1/2}({\partial\Omega}_{N})$ 
of system \eqref{ch2M11y}, which implies surjectivity of operator \eqref{M11op}.
To prove that the operator is also injective, we assume the opposite, which would imply that operator \eqref{ch2M11-inv} is also non-injective thus contradicting its invertibility.
\end{proof}

\begin{corollary}\label{ch2invBVPM} Let $\bs{f}\in \bs{L}^{2}(\Omega)$, $g\in L^{2}(\Omega)$, $\bs{\phi}_{0}\in \bs{H}^{1/2}({\partial\Omega}_{D})$ and $\bs{\psi}_{0}\in \bs{H}^{-1/2}({\partial\Omega}_{N})$ respectively. Then, the BVP \eqref{ch2BVPM} is uniquely solvable in $\bs{H}^{1,0}(\Omega;\bs{\mathcal{A}})$ and the operator 
\begin{align}\label{AM}
\bs{\mathcal{A}}_{M}:\bs{H}^{1,0}(\Omega;\bs{\mathcal{A}})\to \bs{L}^{2}(\Omega)\times L^{2}(\Omega)\times \bs{H}^{1/2}({\partial\Omega}_{D})\times \bs{H}^{-1/2}({\partial\Omega}_{N}) 
\end{align}
is continuously invertible.
\end{corollary}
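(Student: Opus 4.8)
The plan is to get continuity of $\bs{\mathcal{A}}_{M}$ from the mapping properties already established, injectivity from the uniqueness Theorem~\ref{ch2BVPUS}, and surjectivity by transporting the invertibility of the BDIE operator $\mathcal{M}_{*}^{11}$ proved in Theorem~\ref{ch2thinvM11} through the Equivalence Theorem~\ref{ch2thEQm11}; continuity of the inverse is then automatic by the bounded inverse theorem. Here $\bs{\mathcal{A}}_{M}$ is the operator of the left-hand sides of the mixed BVP, i.e. $\bs{\mathcal{A}}_{M}(p,\bs{v}):=\bigl(\bs{\mathcal{A}}(p,\bs{v}),\,\div\bs{v},\,r_{{\partial\Omega}_{D}}\boldsymbol{\gamma}^{+}\bs{v},\,r_{{\partial\Omega}_{N}}\bs{T}^{+}(p,\bs{v})\bigr)$.

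First I would record that $\bs{\mathcal{A}}_{M}$ in \eqref{AM} is well defined and bounded: the map $(p,\bs{v})\mapsto\bs{\mathcal{A}}(p,\bs{v})$ is continuous into $\bs{L}^{2}(\Omega)$ by the very definition \eqref{Hs0def} of the norm on $\bs{H}^{1,0}(\Omega;\bs{\mathcal{A}})$; the map $(p,\bs{v})\mapsto\div\bs{v}$ is continuous into $L^{2}(\Omega)$ since $\bs{v}\in\bs{H}^{1}(\Omega)$; the map $r_{{\partial\Omega}_{D}}\boldsymbol{\gamma}^{+}\bs{v}$ is continuous into $\bs{H}^{1/2}({\partial\Omega}_{D})$ by the trace theorem; and $r_{{\partial\Omega}_{N}}\bs{T}^{+}(p,\bs{v})$ is continuous into $\bs{H}^{-1/2}({\partial\Omega}_{N})$ by the continuity of the canonical traction $\bs{T}^{+}:\bs{H}^{1,0}(\Omega;\bs{\mathcal{A}})\to\bs{H}^{-1/2}({\partial\Omega})$. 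Injectivity of $\bs{\mathcal{A}}_{M}$ is precisely Theorem~\ref{ch2BVPUS}: any two preimages of the same data differ by a solution of the homogeneous mixed BVP, which vanishes.

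For surjectivity, let $(\bs{f},g,\bs{\varphi}_{0},\bs{\psi}_{0})\in\bs{L}^{2}(\Omega)\times L^{2}(\Omega)\times\bs{H}^{1/2}({\partial\Omega}_{D})\times\bs{H}^{-1/2}({\partial\Omega}_{N})$ be arbitrary. Fix any extensions $\bs{\Phi}_{0}\in\bs{H}^{1/2}({\partial\Omega})$, $\bs{\Psi}_{0}\in\bs{H}^{-1/2}({\partial\Omega})$ of $\bs{\varphi}_{0}$, $\bs{\psi}_{0}$, and assemble $\mathcal{F}_{*}^{11}$ by \eqref{ch2F0F}, \eqref{ch2F11def}; by Theorems~\ref{ch2thmUR:theo} and \ref{ch2T3} one has $\mathcal{F}_{*}^{11}\in\bs{H}^{1,0}(\Omega,\bs{\mathcal{A}})\times\bs{H}^{1/2}({\partial\Omega}_{D})\times\bs{H}^{-1/2}({\partial\Omega}_{N})$, which is exactly the target space of operator \eqref{M*nar}. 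By Theorem~\ref{ch2thinvM11} that operator is continuously invertible, so there is $\mathcal{X}=(p,\bs{v},\bs{\psi},\bs{\varphi})\in\bs{H}^{1,0}(\Omega;\bs{\mathcal{A}})\times\widetilde{\bs{H}}^{-1/2}({\partial\Omega}_{D})\times\widetilde{\bs{H}}^{1/2}({\partial\Omega}_{N})$ solving $\mathcal{M}_{*}^{11}\mathcal{X}=\mathcal{F}_{*}^{11}$, i.e. BDIE system \eqref{ch2M11}. Since $\bs{H}^{1,0}(\Omega;\bs{\mathcal{A}})\subset L^{2}(\Omega)\times\bs{H}^{1}(\Omega)$, the Equivalence Theorem~\ref{ch2thEQm11}(ii) applies and yields that $(p,\bs{v})$ solves the mixed BVP \eqref{ch2BVPM} with the prescribed data, i.e. $\bs{\mathcal{A}}_{M}(p,\bs{v})=(\bs{f},g,\bs{\varphi}_{0},\bs{\psi}_{0})$. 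Hence $\bs{\mathcal{A}}_{M}$ is onto, and together with the previous paragraph it is a continuous linear bijection between the Banach spaces in \eqref{AM}; the bounded inverse theorem gives continuity of $\bs{\mathcal{A}}_{M}^{-1}$, and the mixed BVP \eqref{ch2BVPM} is uniquely solvable in $\bs{H}^{1,0}(\Omega;\bs{\mathcal{A}})$.

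No serious obstacle remains once Theorems~\ref{ch2thinvM11} and \ref{ch2thEQm11} are in place; the only points to watch are that the solution $(p,\bs{v})$ extracted from the BDIE system is independent of the chosen extensions $\bs{\Phi}_{0},\bs{\Psi}_{0}$ — guaranteed a posteriori by Theorem~\ref{ch2BVPUS} — and that this BDIE solution actually lands in the narrow space $\bs{H}^{1,0}(\Omega;\bs{\mathcal{A}})$, not merely in $L^{2}(\Omega)\times\bs{H}^{1}(\Omega)$, which is exactly why the argument uses the invertibility of operator \eqref{M*nar} rather than only that of operator \eqref{ch2M11-inv}.
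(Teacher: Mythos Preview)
Your argument is correct and follows essentially the same route as the paper: unique solvability of the BVP is obtained by combining the invertibility of operator \eqref{M*nar} in Theorem~\ref{ch2thinvM11} with the Equivalence Theorem~\ref{ch2thEQm11}. The only cosmetic difference is that the paper obtains continuity of $\bs{\mathcal{A}}_{M}^{-1}$ by exhibiting it explicitly as the composition $(\bs{f},g,\bs{\varphi}_{0},\bs{\psi}_{0})\mapsto\mathcal{F}_{*}^{11}\mapsto(\mathcal{M}_{*}^{11})^{-1}\mathcal{F}_{*}^{11}\mapsto(p,\bs{v})$ of continuous maps, whereas you first verify continuity of $\bs{\mathcal{A}}_{M}$ and then invoke the bounded inverse theorem; both are standard and equally valid.
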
 

\begin{proof}
 By Theorem \ref{ch2thinvM11} for operator \eqref{M*nar}, BDIE system \eqref{ch2M11} is uniquely solvable and by Theorem \ref{ch2thEQm11} it is equivalent to the BVP \eqref{ch2BVPM}, which implies unique solvability of the latter. 
In addition, the inverse to operator \eqref{AM} is defined as 
\[
\mathcal{A}^{-1}_{M}(\bs{f},g,r_{{\partial\Omega}_{D}}\bs{\Phi}_{0}, r_{{\partial\Omega}_{N}}\bs{\Psi}_{0}) 
= [ ((\mathcal{M}^{11}_{*})^{-1}\mathcal{F}^{11}_{*})_{1} ,  ((\mathcal{M}^{11}_{*})^{-1}\mathcal{F}^{11}_{*})_{2}]
\]
and is continuous since operator \eqref{M*nar} is continuously invertible and $\mathcal{F}^{11}_{*}$ is a continuous function of $(\bs{f},g, \bs{\Psi}_{0},\bs{\Phi}_{0})$ due to the mapping properties of the operators involved in \eqref{ch2F0F} and \eqref{ch2F11def}.
\end{proof}

\subsection{Operators $\mathcal{M}_{*}^{22}$ and $\mathcal{M}^{22}$}
{ BDIE system \eqref{ch2M22} can be written in the matrix form as }
\begin{equation}\label{ch2sM22}
\mathcal{M}^{22}_{*}\mathcal{X}=\mathcal{F}_{*}^{22},
\end{equation}
{ where} 
\begin{equation}\label{ch2M22M}
  \mathcal{M}_{*}^{22}=
  \left[ {\begin{array}{cccc}
  I & \mathcal{R}^{\bullet} & -{\Pi^s} & {\Pi^d} \\
   \bs{0} & \boldsymbol{I}+\boldsymbol{\mathcal{R}}& -\boldsymbol{V} & \boldsymbol{W} \\
   \bs{0} & r_{{\partial\Omega}_{D}}\boldsymbol{T}^{+}( \mathcal{R}^{\bullet},  \boldsymbol{\mathcal{R}}) & r_{{\partial\Omega}_{D}}\left(\dfrac{1}{2}\boldsymbol{I}-\boldsymbol{\mathcal{W'}}\right) & r_{{\partial\Omega}_{D}}\boldsymbol{\mathcal{L}}^{+} 
    \\
   \bs{0} & r_{{\partial\Omega}_{N}}\gamma^{+}\boldsymbol{\mathcal{R}}& -r_{{\partial\Omega}_{N}}\boldsymbol{\mathcal{V}} & r_{{\partial\Omega}_{N}}\left(\dfrac{1}{2}\boldsymbol{I} + \boldsymbol{\mathcal{W}}\right)
  \end{array} } \right],
\end{equation}
{ and 
$
\mathcal{X}=(p,\boldsymbol{v},\boldsymbol{\psi},\boldsymbol{\varphi}).
$
By Theorems~\ref{ch2thmUR:theo}-\ref{ch2T3cal},} the mapping properties of the operators involved in
\eqref{ch2M22M} imply continuity of the operator
\begin{multline}\label{M*nar22} 
\mathcal{M}^{22}_{*}:\bs{H}^{1,0}(\Omega,\boldsymbol{\mathcal{A}})\times \widetilde{\boldsymbol{H}}^{-1/2}({\partial\Omega}_{D}) \times \widetilde{\boldsymbol{H}}^{1/2}({\partial\Omega}_{N})\\
\to \bs{H}^{1,0}(\Omega,\boldsymbol{\mathcal{A}})\times \boldsymbol{H}^{-1/2}({\partial\Omega}_{D})\times \boldsymbol{H}^{1/2}({\partial\Omega}_{N}).
\end{multline}

\begin{lemma}\label{ch2L513} Let ${\partial\Omega}=\bar{S}_{1}\cup\bar{S}_{2}$, where $S_{1}$ and $S_{2}$ are two non-intersecting simply connected nonempty submanifolds of ${\partial\Omega}$ with infinitely smooth boundaries. For any vector
\begin{center}
$\mathcal{F}=( F_{0}, \boldsymbol{F}, \boldsymbol{\boldsymbol{\Psi}}, \boldsymbol{\boldsymbol{\Phi}})\in \bs{H}^{1,0}(\Omega;\mathcal{A})\times \boldsymbol{H}^{-1/2}(S_{1})\times \boldsymbol{H}^{1/2}(S_{2})$
\end{center}
there exists a unique four-tuple
\begin{center}
$(g_{*}, \boldsymbol{f}_{*}, \boldsymbol{\boldsymbol{\Psi}}_{*}, \boldsymbol{\boldsymbol{\Phi}}_{*})
=\widetilde{\mathcal{C}}_{S_{1}, S_{2}}\mathcal{F}\in L_{2}(\Omega)\times \boldsymbol{L}_{2}(\Omega)\times \boldsymbol{H}^{-1/2}({\partial\Omega})\times \boldsymbol{H}^{1/2}({\partial\Omega})$
\end{center}
such that
\begin{subequations}
\label{ch2lemma3}
\begin{align}
\mathring{\mathcal{Q}}\boldsymbol{f}_{*} + \dfrac{4}{3}\mu g_{*}+ {\Pi^s}\boldsymbol{\boldsymbol{\Psi}}_{*}-{\Pi^d}\boldsymbol{\boldsymbol{\Phi}}_{*}&= F_{0}\quad \mbox{in} \ \Omega,\label{ch2lemma3p}\\
\boldsymbol{\mathcal{U}f}_{*}{-}\boldsymbol{\mathcal{Q}}g_{*}+\boldsymbol{\boldsymbol{V\Psi}}_{*}
-\boldsymbol{\boldsymbol{W\Phi}}_{*} &= \boldsymbol{F}\quad \mbox{in} \ \Omega,\label{ch2lemma3v}\\
r_{S_{1}}\boldsymbol{\boldsymbol{\Psi}}_{*}&=\boldsymbol{\boldsymbol{\Psi}}\quad \mbox{on} \ S_{1},\label{ch2lemma3d}\\
r_{S_{2}}\boldsymbol{\boldsymbol{\Phi}}_{*}&=\boldsymbol{\boldsymbol{\Phi}}\quad \mbox{in} \ S_{2}.\label{ch2lemma3n}
\end{align}
\end{subequations}
Furthermore, the operator 
\begin{multline}\label{Ctil-cont}
\widetilde{\mathcal{C}}_{S_{1}, S_{2}}:\bs{H}^{1,0}(\Omega;\mathcal{A})\times \boldsymbol{H}^{-1/2}(S_{1})\times \boldsymbol{H}^{1/2}(S_{2})\\
\to L_{2}(\Omega)\times \boldsymbol{L}_{2}(\Omega)\times \boldsymbol{H}^{-1/2}({\partial\Omega})\times \boldsymbol{H}^{1/2}({\partial\Omega})
\end{multline}
is continuous.
\end{lemma}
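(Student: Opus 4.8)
Here is a plan. The idea is to recognise the content of the lemma as the invertibility of a single bounded operator. Let $\mathcal{P}$ be the operator sending $(g_*,\boldsymbol{f}_*,\boldsymbol{\Psi}_*,\boldsymbol{\Phi}_*)\in L_{2}(\Omega)\times\boldsymbol{L}_{2}(\Omega)\times\boldsymbol{H}^{-1/2}(\partial\Omega)\times\boldsymbol{H}^{1/2}(\partial\Omega)$ to the quadruple whose components are the left-hand sides of \eqref{ch2lemma3p} and \eqref{ch2lemma3v} followed by the restrictions $r_{S_1}\boldsymbol{\Psi}_*$ and $r_{S_2}\boldsymbol{\Phi}_*$; then \eqref{ch2lemma3} is exactly the equation $\mathcal{P}(g_*,\boldsymbol{f}_*,\boldsymbol{\Psi}_*,\boldsymbol{\Phi}_*)=\mathcal{F}$. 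The mapping properties \eqref{ch2H10QU}, \eqref{ch2H10IQ}, \eqref{ch2H10PiW} of Theorems~\ref{ch2thmUR:theo} and~\ref{ch2T3}, together with boundedness of the restriction operators, show that $\mathcal{P}$ is continuous from $L_{2}(\Omega)\times\boldsymbol{L}_{2}(\Omega)\times\boldsymbol{H}^{-1/2}(\partial\Omega)\times\boldsymbol{H}^{1/2}(\partial\Omega)$ into $\boldsymbol{H}^{1,0}(\Omega;\boldsymbol{\mathcal{A}})\times\boldsymbol{H}^{-1/2}(S_1)\times\boldsymbol{H}^{1/2}(S_2)$. It then suffices to prove that $\mathcal{P}$ is bijective; since $\widetilde{\mathcal{C}}_{S_1,S_2}=\mathcal{P}^{-1}$, continuity \eqref{Ctil-cont} will follow from the bounded inverse theorem.

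First I would establish injectivity. If $\mathcal{P}(g_*,\boldsymbol{f}_*,\boldsymbol{\Psi}_*,\boldsymbol{\Phi}_*)=0$, I would multiply the velocity equation by $\mu$ and use \eqref{ch2relationU}, \eqref{ch2relationQ2}, \eqref{ch2relationVW}, \eqref{ch2relationP} to rewrite it as $\mathring{\boldsymbol{\mathcal{U}}}\boldsymbol{f}_*-\mathring{\boldsymbol{\mathcal{Q}}}(\mu g_*)+\mathring{\boldsymbol{V}}\boldsymbol{\Psi}_*-\mathring{\boldsymbol{W}}(\mu\boldsymbol{\Phi}_*)=\boldsymbol{0}$ in $\Omega$; taking the divergence and using that $\mathring{\boldsymbol{\mathcal{U}}},\mathring{\boldsymbol{V}},\mathring{\boldsymbol{W}}$ are divergence-free together with \eqref{dQ} gives $\mu g_*=0$, hence $g_*=0$. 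Applying then $\mathring{\boldsymbol{\mathcal{A}}}$ to the pair formed by the (rewritten) pressure and velocity equations and invoking \eqref{A0U0}, \eqref{A0V0} yields $\boldsymbol{f}_*=\boldsymbol{0}$. The two identities that remain, $\Pi^s\boldsymbol{\Psi}_*-\Pi^d\boldsymbol{\Phi}_*=0$ and $\boldsymbol{V}\boldsymbol{\Psi}_*-\boldsymbol{W}\boldsymbol{\Phi}_*=\boldsymbol{0}$ in $\Omega$, combined with $r_{S_1}\boldsymbol{\Psi}_*=\boldsymbol{0}$, $r_{S_2}\boldsymbol{\Phi}_*=\boldsymbol{0}$ (so $\boldsymbol{\Psi}_*\in\widetilde{\boldsymbol{H}}^{-1/2}(S_2)$, $\boldsymbol{\Phi}_*\in\widetilde{\boldsymbol{H}}^{1/2}(S_1)$), put us in the hypotheses of Lemma~\ref{ch2lemma2} with the roles of $S_1$ and $S_2$ interchanged, whence $\boldsymbol{\Psi}_*=\boldsymbol{\Phi}_*=\boldsymbol{0}$.

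Next I would establish surjectivity. Given $\mathcal{F}=(F_0,\boldsymbol{F},\boldsymbol{\Psi},\boldsymbol{\Phi})$, the injectivity computation forces the choices $g_*:=\div\boldsymbol{F}+\mu^{-1}\nabla\mu\cdot\boldsymbol{F}\in L_{2}(\Omega)$ and $\boldsymbol{f}_*:=\mathring{\boldsymbol{\mathcal{A}}}(F_0,\mu\boldsymbol{F})\in\boldsymbol{L}_{2}(\Omega)$, where membership of the latter in $\boldsymbol{L}_{2}$ uses the expansion \eqref{ch2H10-2} and Remark~\ref{R1}. The pair $(G_0,\boldsymbol{G}):=\bigl(F_0-\tfrac43\mu g_*-\mathring{\mathcal{Q}}\boldsymbol{f}_*,\ \mu\boldsymbol{F}+\mathring{\boldsymbol{\mathcal{Q}}}(\mu g_*)-\mathring{\boldsymbol{\mathcal{U}}}\boldsymbol{f}_*\bigr)$ then lies in $\boldsymbol{H}^{1,0}(\Omega;\mathring{\boldsymbol{\mathcal{A}}})$ and, by \eqref{A0U0}, \eqref{4.15}, \eqref{dQ}, satisfies $\mathring{\boldsymbol{\mathcal{A}}}(G_0,\boldsymbol{G})=\boldsymbol{0}$ and $\div\boldsymbol{G}=0$ in $\Omega$, i.e. it is a homogeneous constant-coefficient Stokes field. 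It remains to find $\boldsymbol{\Psi}_*\in\boldsymbol{H}^{-1/2}(\partial\Omega)$, $\boldsymbol{\Phi}_*\in\boldsymbol{H}^{1/2}(\partial\Omega)$ with $\mathring{\Pi}^s\boldsymbol{\Psi}_*-\mathring{\Pi}^d(\mu\boldsymbol{\Phi}_*)=G_0$, $\mathring{\boldsymbol{V}}\boldsymbol{\Psi}_*-\mathring{\boldsymbol{W}}(\mu\boldsymbol{\Phi}_*)=\boldsymbol{G}$ in $\Omega$ and $r_{S_1}\boldsymbol{\Psi}_*=\boldsymbol{\Psi}$, $r_{S_2}\boldsymbol{\Phi}_*=\boldsymbol{\Phi}$; by \eqref{ch2relationVW}, \eqref{ch2relationP} this is equivalent to the last two lines of \eqref{ch2lemma3}. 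By the $\mu=1$ case of Theorem~\ref{ch2th3gp}, the pair $\bigl(\mathring{\boldsymbol{T}}(G_0,\boldsymbol{G}),\ \mu^{-1}\gamma^{+}\boldsymbol{G}\bigr)$ already satisfies the two potential identities, and I would match the prescribed restrictions by adding a correction lying in the kernel of $(\boldsymbol{\psi},\boldsymbol{\eta})\mapsto\bigl(\mathring{\Pi}^s\boldsymbol{\psi}-\mathring{\Pi}^d\boldsymbol{\eta},\ \mathring{\boldsymbol{V}}\boldsymbol{\psi}-\mathring{\boldsymbol{W}}\boldsymbol{\eta}\bigr)\big|_\Omega$. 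Using the jump relations of Theorem~\ref{ch2jumps} (with $\mu=1$) and the Lyapunov--Tauber property, that kernel is precisely the set of exterior Cauchy data $\bigl(-\mathring{\boldsymbol{T}}^{-}\boldsymbol{u},\,-\gamma^{-}\boldsymbol{u}\bigr)$ of homogeneous constant-coefficient Stokes fields $\boldsymbol{u}$ in $\Omega^{-}$ decaying at infinity, so the correction is produced by solving the exterior mixed Stokes problem with Neumann data on $S_1$ and Dirichlet data on $S_2$; its unique solvability is classical (cf. \cite{ladynes, kohr1, hsiao}), or may be read off from the same boundary-operator invertibility as in Theorem~\ref{ch2thinvM11ring}. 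Translating back through \eqref{ch2relationVW}, \eqref{ch2relationP} then produces a solution of \eqref{ch2lemma3}, and uniqueness of $(\boldsymbol{\Psi}_*,\boldsymbol{\Phi}_*)$ is already contained in the injectivity step.

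The hard part is this last step of the surjectivity argument: since $g_*$, $\boldsymbol{f}_*$ and hence $(G_0,\boldsymbol{G})$ are forced by $(F_0,\boldsymbol{F})$, the two prescribed restrictions must be met using only the (infinite-dimensional) indeterminacy in the layer-potential representation of $(G_0,\boldsymbol{G})$, which is exactly what brings the exterior mixed BVP into play. Everything else — boundedness of $\mathcal{P}$, the divergence and $\mathring{\boldsymbol{\mathcal{A}}}$ tricks, the bookkeeping with the relations \eqref{ch2relationU}--\eqref{ch2relationP} and \eqref{QP}--\eqref{4.15}, and the deduction of \eqref{Ctil-cont} from $\widetilde{\mathcal{C}}_{S_1,S_2}=\mathcal{P}^{-1}$ via the bounded inverse theorem — is routine.
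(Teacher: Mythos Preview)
Your argument is correct, but it takes a genuinely different route from the paper's. Both proofs begin the same way: $g_*$ and $\boldsymbol{f}_*$ are forced by taking the divergence and applying $\mathring{\boldsymbol{\mathcal{A}}}$, exactly as you describe, and the injectivity/uniqueness part is essentially identical (the paper does it at the end, via the same reduction to Lemma~\ref{ch2lemma2} with $S_1,S_2$ swapped). The divergence is in how surjectivity for the layer-potential part is obtained. The paper fixes continuous extensions $\boldsymbol{\Psi}^0=E^{-1/2}_{S_1}\boldsymbol{\Psi}$, $\boldsymbol{\Phi}^0=E^{1/2}_{S_2}\boldsymbol{\Phi}$, writes $\boldsymbol{\Psi}_*=\boldsymbol{\Psi}^0+\widetilde{\boldsymbol{\psi}}$, $\boldsymbol{\Phi}_*=\boldsymbol{\Phi}^0+\widetilde{\boldsymbol{\varphi}}$ with $\widetilde{\boldsymbol{\psi}}\in\widetilde{\boldsymbol{H}}^{-1/2}(S_2)$, $\widetilde{\boldsymbol{\varphi}}\in\widetilde{\boldsymbol{H}}^{1/2}(S_1)$, takes the trace on $S_2$ and the traction on $S_1$ of the remaining constant-coefficient potential identity, and lands on the \emph{interior} $\mathring{\mathcal M}^{11}$ system \eqref{ch2M11CC1} (with the roles of the two patches swapped). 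Invertibility of that operator (Theorem~\ref{ch2thinvM11ring}) delivers $(\widetilde{\boldsymbol{\psi}},\widetilde{\boldsymbol{\varphi}})$, and the paper then checks that this BIE solution actually solves the domain identities by appealing to uniqueness of the \emph{interior} mixed Stokes BVP. Continuity of $\widetilde{\mathcal C}_{S_1,S_2}$ is read off from the explicit construction.

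Your route instead produces a particular representation of the residual Stokes pair $(G_0,\boldsymbol{G})$ from the interior third Green identity, then parametrises the indeterminacy by exterior Cauchy data and matches the prescribed restrictions by solving an \emph{exterior} mixed Stokes BVP (Neumann on $S_1$, Dirichlet on $S_2$); continuity comes for free from the bounded inverse theorem. This is conceptually clean, but it imports exterior-domain machinery (exterior representation formula with decay, unique solvability of the exterior mixed problem) that the paper deliberately avoids. The paper's construction stays entirely inside the interior theory already developed (Theorem~\ref{ch2thinvM11ring} and interior BVP uniqueness), which keeps the argument self-contained and also yields an explicit formula for $\widetilde{\mathcal C}_{S_1,S_2}$ rather than mere existence of a bounded inverse. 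Your approach would work, but if you want to match the paper's level of self-containment you should justify the exterior ingredients carefully or, more simply, replace the exterior-problem step by the extension-plus-interior-BIE reduction.
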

\begin{proof} { Let $E^s_{S_i}:\boldsymbol{H}^s(S_i)\to\boldsymbol{H}^{s}(\partial\Omega)$, $i=\{1,2\}$, $|s|\le1$, be some linear continuous extension operators from $S_i$ to the whole boundary ${\partial\Omega}$ (cf. \cite[Subsection 4.2]{triebel}), and
let $\boldsymbol{\Psi}^{0}=E^{-1/2}_{S_1}\boldsymbol{\Psi}$ and 
$\boldsymbol{\Phi}^0=E^{1/2}_{S_2}\boldsymbol{\Phi}$.}
Consequently, arbitrary extensions of the functions $\boldsymbol{\Psi}$ and $\boldsymbol{\Phi}$ can be represented as
\begin{equation}\label{ch2aa1}
\boldsymbol{\Psi}_{*}=\boldsymbol{\Psi}^{0}+\boldsymbol{\widetilde{\psi}},\quad 
\boldsymbol{\Phi}_{*}=\boldsymbol{\Phi}^{0}+\widetilde{\boldsymbol{\varphi}}\ \mbox{ on }\partial\Omega;\quad
\widetilde{\boldsymbol{\psi}}\in \widetilde{\boldsymbol{H}}^{-1/2}(S_{2}),\quad
\widetilde{\boldsymbol{\varphi}}\in \widetilde{\boldsymbol{H}}^{1/2}(S_{1}).
\end{equation}
The functions $\boldsymbol{\Psi}_{*}$ and $\boldsymbol{\Phi}_{*}$, in form \eqref{ch2aa1} satisfy conditions \eqref{ch2lemma3d} and \eqref{ch2lemma3n}. Consequently, it is only left to show that the functions $g_{*},\boldsymbol{f}_{*}, \boldsymbol{\widetilde{\psi}}$ and $ \widetilde{\boldsymbol{\varphi}}$ can be chosen in a particular way such that equations \eqref{ch2lemma3p}-\eqref{ch2lemma3v} are satisfied.

Applying relations \eqref{ch2relationU}-\eqref{ch2relationP} to equations \eqref{ch2lemma3p}-\eqref{ch2lemma3v}, we obtain
\begin{eqnarray}
\mathring{\mathcal{Q}}f_{*} + \dfrac{4}{3}\mu g_{*}+  \mathring{{\Pi^s}}\left(\boldsymbol{\Psi}_{0}+\boldsymbol{\widetilde{\psi}}\right)-\mathring{{\Pi^d}}\left(\mu\boldsymbol{\Phi}_{0}+\mu\boldsymbol{\varphi}\right) &=& F_{0}\quad \mbox{in} \ \Omega,\label{ch2a2p}\\
\boldsymbol{\mathring{\mathcal{U}}}f_{*} {-} \boldsymbol{\mathring{\mathcal{Q}}}(\mu g_{*}) + \boldsymbol{\mathring{V}}\left(\boldsymbol{\Psi}_{0}+\boldsymbol{\widetilde{\psi}}\right) - \boldsymbol{\mathring{W}}\left(\mu\boldsymbol{\Phi}_{0}+\mu\boldsymbol{\varphi}\right)&=&\mu \boldsymbol{F}\quad \mbox{in} \ \Omega.\label{ch2a2}
\end{eqnarray}

Applying the Stokes operator with constant viscosity $\mu=1$, $\mathring{\boldsymbol{\mathcal{A}}}$, to equations \eqref{ch2a2p}, \eqref{ch2a2}, and the divergence operator to equation \eqref{ch2a2}, we obtain
\begin{align}\label{ch2a3}
\boldsymbol{f}_{*}&=\mathring{\boldsymbol{\mathcal{A}}}(F_{0}, \mu \boldsymbol{F}),\quad
g_{*} = \frac{1}{\mu}\div(\mu \boldsymbol{F})
\end{align}
which shows that the function $\boldsymbol{f}_{*}$ and $g_{*}$ are uniquely determined by $F_{0}$ and $\mu \boldsymbol{F}$ and belong to $\bs{L}^{2}(\Omega)$ and  $L^{2}(\Omega)$, respectively. 

Substituting now \eqref{ch2a3} into equations \eqref{ch2a2p}-\eqref{ch2a2} gives
\begin{align}
 \mathring{{\Pi^s}}\boldsymbol{\widetilde{\psi}} - \mathring{{\Pi^d}}(\mu\widetilde{\boldsymbol{\varphi}})&=J_{0}\mathcal F, 
\quad
\label{ch2a31}
\boldsymbol{\mathring{V}}\boldsymbol{\widetilde{\psi}} - \boldsymbol{\mathring{W}}(\mu\widetilde{\boldsymbol{\varphi}})
= \boldsymbol{J}\mathcal F
\quad
\text{in}\hspace{0.2em} \Omega,
\end{align}
where the continuous operators $J_{0}$ and $\boldsymbol{J}$ are defined as
\begin{align}
J_{0}\mathcal F&:=\left( F_{0}-\dfrac{4}{3}\div(\mu \bs{F}) - \mathring{\mathcal{Q}}\left(\mathring{\boldsymbol{\mathcal{A}}}(F_{0}, \mu \boldsymbol{F})\right) - \mathring{{\Pi^s}}(E^{-1/2}_{S_1}\boldsymbol{\Psi}) 
+ \mathring{{\Pi^d}}(\mu E^{1/2}_{S_2}\boldsymbol{\Phi}) \right),
\label{J0F}\\
\boldsymbol{J}\mathcal F&:= 
\left(\mu \boldsymbol{F}-\boldsymbol{\mathring{\mathcal{U}}}\left(\mathring{\boldsymbol{\mathcal{A}}}(F_{0}, \mu \boldsymbol{F})\right) 
{+} \boldsymbol{\mathring{\mathcal{Q}}}\div(\mu \bs{F})  -\boldsymbol{\mathring{V}}(E^{-1/2}_{S_1}\boldsymbol{\Psi})
+\boldsymbol{\mathring{W}}(\mu E^{1/2}_{S_2}\boldsymbol{\Phi})\right).
\label{JF}
\end{align}

By Theorems \ref{ch2thmUR:theo}, \ref{ch2T3}, 
$({J}_0\mathcal F,\boldsymbol{J}\mathcal F)\in \bs{H}^{1,0}(\Omega;\mathring{\mathcal{A}})$, thus the canonical conormal derivative $\boldsymbol{\mathring{T}}^{+}(J_{0}\mathcal F, \boldsymbol{J}\mathcal F)$ is well defined. 
Then system \eqref{ch2a31} implies
\begin{align}
r_{S_{2}}\boldsymbol{\gamma}^{+}\left( \boldsymbol{\mathring{V}}\boldsymbol{\widetilde{\psi}} - \boldsymbol{\mathring{W}}(\mu\widetilde{\boldsymbol{\varphi}})\right) &= r_{S_{2}}\left(\boldsymbol{\gamma}^{+}\boldsymbol{J}\mathcal F\right),\label{ch2a4}\\
r_{S_{1}}\left[ \boldsymbol{\mathring{T}}^{+}\left( \mathring{{\Pi^s}}(\widetilde{\psi}) - \mathring{{\Pi^d}}(\mu\boldsymbol{\widetilde{\varphi}}),\boldsymbol{\mathring{V}}\widetilde{\psi} - \boldsymbol{\mathring{W}}(\mu\boldsymbol{\widetilde{\varphi}})\right)\right] &= r_{S_{1}}\left(\boldsymbol{\mathring{T}}^{+}(J_{0}\mathcal F, \boldsymbol{J}\mathcal F)\right)\label{ch2a5}.
\end{align}
System \eqref{ch2a4}-\eqref{ch2a5} can be written in the matrix form as
\begin{equation}\label{ch2lemma3sys}
\left[
\begin{array}{cc}
r_{S_{2}}\bs{\mathring{\mathcal{V}}} & r_{S_{2}}\bs{\gamma}^{+}\bs{\mathring{W}} \\  
r_{S_{1}}\bs{\mathring{W}}{}'  & r_{S_{1}}\bs{\mathring{\mathcal{L}}} \\ 
\end{array}\right]\left[ \begin{array}{c}
\boldsymbol{\widetilde{\psi}}\\
\mu\boldsymbol{\widetilde{\varphi}}  
\end{array}\right] = \left[ \begin{array}{c}
r_{S_{2}}\left(\boldsymbol{\gamma}^{+}\boldsymbol{J}\mathcal F\right)\\
r_{S_{1}}\left(\boldsymbol{\mathring{T}}^{+}(J_{0}\mathcal F, \boldsymbol{J}\mathcal F)\right)  
\end{array}\right].
\end{equation}
The matrix operator given by the left-hand side of the equations \eqref{ch2lemma3sys} is an isomorphism between the spaces $\widetilde{\boldsymbol{H}}^{-1/2}(S_{2})\times\widetilde{\boldsymbol{H}}^{1/2}(S_{1})$ and $\boldsymbol{H}^{1/2}(S_{2})\times \boldsymbol{H}^{-1/2}(S_{1})$ (see Theorem~\ref{ch2thinvM11ring}). 
Therefore the solution  of system \eqref{ch2lemma3sys} can be written as $(\widetilde{\boldsymbol{\varphi}},\boldsymbol{\widetilde{\psi}})=\mathring C\mathcal F$, where $\mathring C$ is a continuous operator, which together with \eqref{ch2a3}, \eqref{ch2aa1} and continuity of the extension operator $E^s_{S_i}$ produces a linear continuous operator $\widetilde{\mathcal{C}}_{S_{1}, S_{2}}$ in \eqref{Ctil-cont}. 

Let us prove that $\boldsymbol{\Psi}_{*}$ and $\boldsymbol{\Phi}_{*}$, obtained by substituting in \eqref{ch2aa1}
any solution $(\widetilde{\boldsymbol{\psi}},\widetilde{\boldsymbol{\varphi}})$ of \eqref{ch2lemma3sys}, and $\boldsymbol{f}_{*}$ $g_{*}$, given by \eqref{ch2a3}, satisfy \eqref{ch2lemma3}. 
Equations \eqref{ch2lemma3d} and \eqref{ch2lemma3n} are immediately implied by \eqref{ch2aa1}.
The couple 
$
\left(\mathring{{\Pi^s}}\boldsymbol{\widetilde{\psi}} - \mathring{{\Pi^d}}(\mu\widetilde{\boldsymbol{\varphi}}),
\boldsymbol{\mathring{V}}\boldsymbol{\widetilde{\psi}} - \boldsymbol{\mathring{W}}(\mu\widetilde{\boldsymbol{\varphi}})\right)
$ 
satisfies the incompressible homogeneous PDE Stokes system with $\mu=1$. 
It is easy to check that the same system is also satisfied by the couple $({J}_0\mathcal F,\boldsymbol{J}\mathcal F)$.
By \eqref{ch2a4}-\eqref{ch2a5}, the couples have coinciding mixed boundary conditions and thus they coincide also in the domain $\Omega$ by virtue of uniqueness of solution of the mixed BVP for the Stokes system with $\mu=1$, i.e., equations \eqref{ch2a31} hold and substitution of \eqref{J0F}, \eqref{JF} into their right hand sides leads to \eqref{ch2lemma3p} and \eqref{ch2lemma3v}.

To prove that the operator $\widetilde{\mathcal{C}}_{S_{1}, S_{2}}$ is unique, let us consider system \eqref{ch2lemma3} with zero right-hand side $\mathcal F$. Then \eqref{ch2a3} implies $\boldsymbol{f}_{*}=\bs{0}$, $g_{*} =0$, while \eqref{ch2lemma3d}-\eqref{ch2lemma3n} and \eqref{ch2aa1} give $\boldsymbol{\Psi}_{*}=\boldsymbol{\widetilde{\psi}}$,
$\boldsymbol{\Phi}_{*}=\widetilde{\boldsymbol{\varphi}}$ on $\partial\Omega$, and finally \eqref{ch2lemma3sys} implies 
$\boldsymbol{\widetilde{\psi}}=\bs{0}$, $\widetilde{\boldsymbol{\varphi}}=\bs{0}$.
This means the solution $(g_{*}, \boldsymbol{f}_{*}, \boldsymbol{\boldsymbol{\Psi}}_{*}, \boldsymbol{\boldsymbol{\Phi}}_{*})$ of inhomogeneous system \eqref{ch2lemma3} is unique, along with the operator $\widetilde{\mathcal{C}}_{S_{1}, S_{2}}$.  
\end{proof}

\begin{cor}\label{ch2corinvM22a} 
{ Let ${\partial\Omega}=\bar{S}_{1}\cup\bar{S}_{2}$, where $S_{1}$ and $S_{2}$ are two non-intersecting simply connected nonempty submanifolds of ${\partial\Omega}$ with infinitely smooth boundaries.}
For any four-tuple
\begin{center}
$\mathcal{F}=(F_{0}, \bs{F}, \boldsymbol{\mathcal{F}}_{2}, \boldsymbol{\mathcal{F}}_{3})^{\top}\in \bs{H}^{1,0}(\Omega;\mathcal{A})\times \boldsymbol{H}^{-1/2}(S_{1})\times \boldsymbol{H}^{1/2}(S_{2}),$
\end{center}
there exists a unique four-tuple
\begin{center}
$(g_{*}, \boldsymbol{f}_{*}, \boldsymbol{\Psi}_{*}, \boldsymbol{\Phi}_{*})^{\top}=\mathcal{C}_{S_{1}, S_{2}}\mathcal{F}\in L^{2}(\Omega)\times\boldsymbol{L}_{2}(\Omega)\times \boldsymbol{H}^{-1/2}({\partial\Omega})\times \boldsymbol{H}^{1/2}({\partial\Omega}),$
\end{center}
such that
\begin{align}
\mathring{\mathcal{Q}}\boldsymbol{f}_{*} +\dfrac{4}{3}\mu g_{*}+ {\Pi^s}\boldsymbol{\Psi}_{*} -{\Pi^d}\boldsymbol{\Phi}_{*}&= F_{0},\hspace{0.3em}in\hspace{0.2em}\Omega,\label{ch2cora1}\\
 \boldsymbol{\mathcal{U}}\boldsymbol{f}_{*}{-}\bs{\mathcal{Q}}g_{*}+\boldsymbol{V}\boldsymbol{\Psi}_{*}-\boldsymbol{W}\boldsymbol{\Phi}_{*} &= \boldsymbol{F},\hspace{0.3em}in\hspace{0.2em}\Omega,\label{ch2cora2}\\
r_{S_{1}}(\boldsymbol{T}^{+}({F}_{0},\boldsymbol{\mathcal{F}}_{1})-\boldsymbol{\Psi}_{*})&=\boldsymbol{\mathcal{F}}_{2},\hspace{0.3em}on\hspace{0.2em}S_{1}\label{ch2cora3}\\
r_{S_{2}}(\boldsymbol{\gamma}^{+}\boldsymbol{\mathcal{F}}_{1}- \boldsymbol{\Phi}_{*})&=\boldsymbol{\mathcal{F}}_{3},\hspace{0.3em}on\hspace{0.2em}S_{2}\label{ch2cora4}.
\end{align}
Furthermore, the operator 
\begin{center}
$\mathcal{C}_{S_{1}, S_{2}}:\bs{H}^{1,0}(\Omega;\mathcal{A})\times \boldsymbol{H}^{-1/2}(S_{1})\times \boldsymbol{H}^{1/2}(S_{2})\to L^{2}(\Omega)\times\bs{L}^{2}(\Omega)\times \boldsymbol{H}^{-1/2}({\partial\Omega})\times \boldsymbol{H}^{1/2}({\partial\Omega})$
\end{center}
is continuous.
\end{cor}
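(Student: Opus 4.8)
The plan is to deduce this corollary directly from Lemma~\ref{ch2L513} by an affine change of the boundary data, the only new ingredient being that the canonical traction of the given right--hand side is available. Since $(F_0,\bs{F})\in\bs{H}^{1,0}(\Omega;\bs{\mathcal{A}})$, the element $\bs{T}^+(F_0,\bs{F})\in\bs{H}^{-1/2}(\partial\Omega)$ is well defined, the canonical traction operator $\bs{T}^+:\bs{H}^{1,0}(\Omega;\bs{\mathcal{A}})\to\bs{H}^{-1/2}(\partial\Omega)$ being continuous, while $\bs{\gamma}^+\bs{F}\in\bs{H}^{1/2}(\partial\Omega)$ by the trace theorem; here and below $\bs{\mathcal{F}}_1$ is read as $\bs{F}$. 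Hence, given $\mathcal{F}=(F_0,\bs{F},\bs{\mathcal{F}}_2,\bs{\mathcal{F}}_3)$ as in the statement, I would set
\[
\bs{\Psi}:=r_{S_1}\bs{T}^+(F_0,\bs{F})-\bs{\mathcal{F}}_2\in\bs{H}^{-1/2}(S_1),\qquad
\bs{\Phi}:=r_{S_2}\bs{\gamma}^+\bs{F}-\bs{\mathcal{F}}_3\in\bs{H}^{1/2}(S_2),
\]
using the continuity of the restriction operators $r_{S_1}$ and $r_{S_2}$.

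Next I would apply Lemma~\ref{ch2L513} to the four--tuple $(F_0,\bs{F},\bs{\Psi},\bs{\Phi})$ and obtain the unique $(g_*,\bs{f}_*,\bs{\Psi}_*,\bs{\Phi}_*)=\widetilde{\mathcal{C}}_{S_1,S_2}(F_0,\bs{F},\bs{\Psi},\bs{\Phi})$ solving \eqref{ch2lemma3}. Equations \eqref{ch2lemma3p}--\eqref{ch2lemma3v} coincide with \eqref{ch2cora1}--\eqref{ch2cora2}, and inserting the definitions of $\bs{\Psi}$ and $\bs{\Phi}$ into \eqref{ch2lemma3d}--\eqref{ch2lemma3n} turns $r_{S_1}\bs{\Psi}_*=\bs{\Psi}$ and $r_{S_2}\bs{\Phi}_*=\bs{\Phi}$ into \eqref{ch2cora3} and \eqref{ch2cora4}, which gives existence. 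To package continuity, I would define $\mathcal{C}_{S_1,S_2}:=\widetilde{\mathcal{C}}_{S_1,S_2}\circ\Theta$, where
\[
\Theta(F_0,\bs{F},\bs{\mathcal{F}}_2,\bs{\mathcal{F}}_3):=\bigl(F_0,\bs{F},\,r_{S_1}\bs{T}^+(F_0,\bs{F})-\bs{\mathcal{F}}_2,\,r_{S_2}\bs{\gamma}^+\bs{F}-\bs{\mathcal{F}}_3\bigr).
\]
The operator $\Theta$ is linear and continuous on $\bs{H}^{1,0}(\Omega;\bs{\mathcal{A}})\times\bs{H}^{-1/2}(S_1)\times\bs{H}^{1/2}(S_2)$ by the mapping properties recalled above, so $\mathcal{C}_{S_1,S_2}$ is continuous by Lemma~\ref{ch2L513}.

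For uniqueness I would observe that $\Theta$ is an involution, $\Theta\circ\Theta=\mathrm{Id}$, hence a linear isomorphism of the data space onto itself; therefore any four--tuple satisfying \eqref{ch2cora1}--\eqref{ch2cora4} for data $\mathcal{F}$ satisfies \eqref{ch2lemma3} for data $\Theta(\mathcal{F})$, and is thus uniquely determined by Lemma~\ref{ch2L513}, along with the operator $\mathcal{C}_{S_1,S_2}$. I expect no genuine obstacle here: the entire analytic content sits in Lemma~\ref{ch2L513}, and the only points needing attention are that $\bs{T}^+(F_0,\bs{F})$ is well defined and continuous on $\bs{H}^{1,0}(\Omega;\bs{\mathcal{A}})$ (used in forming $\bs{\Psi}$ and $\Theta$) and that the data transformation $\Theta$ is a linear isomorphism, so that both continuity and uniqueness transfer cleanly.
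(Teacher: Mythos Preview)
Your proposal is correct and follows essentially the same approach as the paper, which also reduces the corollary to Lemma~\ref{ch2L513} via the substitutions $\bs{\Psi}:=r_{S_1}\bs{T}^+(F_0,\bs{F})-\bs{\mathcal{F}}_2$ and $\bs{\Phi}:=r_{S_2}\bs{\gamma}^+\bs{F}-\bs{\mathcal{F}}_3$. Your explicit packaging of the data transformation as an involution $\Theta$ to transfer uniqueness and continuity is a clean elaboration of what the paper leaves implicit in its one-line proof.
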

\begin{proof}
The Corollary follows from applying Lemma \ref{ch2L513} with $\bs{\Psi}:=r_{S_{1}}\bs{T}^{+}({F}_{0},\bs{\mathcal{F}}_{1})-\bs{\mathcal{F}}_{2}$ and $\bs{\Phi}:=r_{S_{2}}\gamma^{+}\bs{\mathcal{F}}_{1}- \bs{\mathcal{F}}_{3}$.
\end{proof}

\begin{theorem}\label{ch2thM22inv}
Operator \eqref{M*nar22}  is continuously invertible.
\end{theorem}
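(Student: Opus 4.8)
The plan is to prove that the operator $\mathcal{M}^{22}_*$ in \eqref{M*nar22}, whose continuity has already been recorded, is a bijection, and then to deduce boundedness of its inverse. \textbf{Injectivity.} Suppose $\mathcal{M}^{22}_*\mathcal{X}=\boldsymbol{0}$ for some $\mathcal{X}=(p,\bs{v},\bs{\psi},\bs{\varphi})$ in the domain space. Then $\mathcal{X}$ solves BDIE system \eqref{ch2M22} whose right-hand side, by \eqref{ch2F0F} and \eqref{7.6}, is the one generated by the zero data $(\bs{f},g,\bs{\Phi}_0,\bs{\Psi}_0)=\boldsymbol{0}$ (so that $\bs{\varphi}_0=\boldsymbol{0}$, $\bs{\psi}_0=\boldsymbol{0}$). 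By Theorem~\ref{ch2thEQm11}(iii) this system is uniquely solvable, and since the zero tuple is a solution, $\mathcal{X}=\boldsymbol{0}$.

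\textbf{Surjectivity.} Let $\mathcal{F}^{22}_*=(F_0,\bs{F},\bs{\mathcal{F}}_2,\bs{\mathcal{F}}_3)$ be an arbitrary element of $\bs{H}^{1,0}(\Omega,\boldsymbol{\mathcal{A}})\times\boldsymbol{H}^{-1/2}({\partial\Omega}_D)\times\boldsymbol{H}^{1/2}({\partial\Omega}_N)$; in particular $(F_0,\bs{F})\in\bs{H}^{1,0}(\Omega,\boldsymbol{\mathcal{A}})$, so $\bs{T}^+(F_0,\bs{F})$ is well defined. Apply Corollary~\ref{ch2corinvM22a} with $S_1={\partial\Omega}_D$, $S_2={\partial\Omega}_N$ to obtain $(g_*,\bs{f}_*,\bs{\Psi}_*,\bs{\Phi}_*)=\mathcal{C}_{{\partial\Omega}_D,{\partial\Omega}_N}\mathcal{F}^{22}_*$ with $g_*\in L_2(\Omega)$, $\bs{f}_*\in\bs{L}_2(\Omega)$, $\bs{\Psi}_*\in\bs{H}^{-1/2}({\partial\Omega})$, $\bs{\Phi}_*\in\bs{H}^{1/2}({\partial\Omega})$ satisfying \eqref{ch2cora1}--\eqref{ch2cora4}. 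Comparing \eqref{ch2cora1}--\eqref{ch2cora4} with the right-hand side formulas \eqref{ch2F0F}, \eqref{7.6} shows that $\mathcal{F}^{22}_*$ is exactly the right-hand side of BDIE system \eqref{ch2M22} for the data $\bs{f}=\bs{f}_*$, $g=g_*$, $\bs{\Phi}_0=\bs{\Phi}_*$, $\bs{\Psi}_0=\bs{\Psi}_*$, with $\bs{\varphi}_0=r_{{\partial\Omega}_D}\bs{\Phi}_*$, $\bs{\psi}_0=r_{{\partial\Omega}_N}\bs{\Psi}_*$. By Theorem~\ref{ch2thEQm11}(iii) this system has a solution $\mathcal{X}=(p,\bs{v},\bs{\psi},\bs{\varphi})$, which by Theorem~\ref{ch2thEQm11}(ii) lies in $\bs{H}^{1,0}(\Omega;\boldsymbol{\mathcal{A}})\times\widetilde{\boldsymbol{H}}^{-1/2}({\partial\Omega}_D)\times\widetilde{\boldsymbol{H}}^{1/2}({\partial\Omega}_N)$ and satisfies $\mathcal{M}^{22}_*\mathcal{X}=\mathcal{F}^{22}_*$. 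Hence $\mathcal{M}^{22}_*$ is onto.

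\textbf{Continuity of the inverse.} Since $\mathcal{M}^{22}_*$ is a continuous bijection between Hilbert spaces, the bounded inverse theorem yields continuity of $(\mathcal{M}^{22}_*)^{-1}$; alternatively this inverse factors as the composition of the continuous operator $\mathcal{C}_{{\partial\Omega}_D,{\partial\Omega}_N}$ of Corollary~\ref{ch2corinvM22a} with the continuous data-to-solution map built from $\boldsymbol{\mathcal{A}}_M^{-1}$ of Corollary~\ref{ch2invBVPM} and the continuous trace and canonical traction operators. A second, self-contained route, parallel to the proof of Theorem~\ref{ch2thinvM11}(i), is to replace $\mathcal{M}^{22}_*$ by the frozen operator $\widetilde{\mathcal{M}}^{22}$ obtained by deleting the remainder entries $\bs{\mathcal{R}}$, $\mathcal{R}^\bullet$, $\bs{\gamma}^+\bs{\mathcal{R}}$, $\bs{T}^+(\mathcal{R}^\bullet,\bs{\mathcal{R}})$ and by substituting $\mathring{\bs{\mathcal{W}}}{}'$ and $\widehat{\bs{\mathcal{L}}}$ for $\bs{\mathcal{W}}'$ and $\bs{\mathcal{L}}^+$ in the ${\partial\Omega}_D$-row: after multiplication by $\mu$ its boundary block becomes the constant-coefficient mixed system \eqref{ch2BIE1}--\eqref{ch2BIE2}, invertible by Corollary~\ref{ch2corBIE}(iii) (see \cite[Theorem 3.10]{kohr1}), so $\widetilde{\mathcal{M}}^{22}$ is invertible and hence Fredholm of index zero, while $\mathcal{M}^{22}_*-\widetilde{\mathcal{M}}^{22}$ is compact by Theorems~\ref{ch2thRcomp}, \ref{ch2T3cal} and Corollary~\ref{ch2Lcompact}; thus $\mathcal{M}^{22}_*$ is Fredholm of index zero and, by the injectivity above, invertible. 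The only point requiring care is the bookkeeping identification of \eqref{ch2cora1}--\eqref{ch2cora4} with \eqref{ch2F0F}, \eqref{7.6}; all the analytic substance has already been discharged in Lemma~\ref{ch2L513}/Corollary~\ref{ch2corinvM22a} and in the Equivalence Theorem~\ref{ch2thEQm11}.
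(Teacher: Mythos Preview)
Your main argument is correct and essentially mirrors the paper's proof: both use Corollary~\ref{ch2corinvM22a} (with $S_1=\partial\Omega_D$, $S_2=\partial\Omega_N$) to represent an arbitrary right-hand side in the canonical form \eqref{ch2F0F}, \eqref{7.6}, then invoke the Equivalence Theorem~\ref{ch2thEQm11} together with the BVP solvability (Corollary~\ref{ch2invBVPM}) to produce a solution, and conclude continuity of the inverse either via the explicit factorization \eqref{7.34}--\eqref{7.36} or, as you add, via the bounded inverse theorem.

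One caveat on your ``second, self-contained route'': the Fredholm/compact-perturbation argument you sketch is exactly what the paper does in Theorem~\ref{ch2thM22star}, but \emph{for the wider spaces} $L_2(\Omega)\times\bs{H}^1(\Omega)\times\cdots$, not for $\bs{H}^{1,0}(\Omega,\boldsymbol{\mathcal{A}})\times\cdots$. The compactness results in Theorems~\ref{ch2thRcomp}, \ref{ch2T3cal} and Corollary~\ref{ch2Lcompact} are stated in $H^s$-type spaces, and applying them directly in $\bs{H}^{1,0}(\Omega,\boldsymbol{\mathcal{A}})$ would require an additional argument (this is precisely why the paper's Theorem~\ref{ch2thinvM11} first treats the wider spaces in part~(i) and only then deduces the $\bs{H}^{1,0}$ case in part~(ii)). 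Since your primary argument is self-contained and correct, this alternative is best dropped or deferred to the wider-space setting.
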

\begin{proof}
{ Let us consider system \eqref{ch2sM22} with an arbitrary right hand side }
\[
\mathcal{F}_{*}^{22}\in \bs{H}^{1,0}(\Omega ;\mathcal{A})\times \boldsymbol{H}^{-1/2}({\partial\Omega}_{D})\times \boldsymbol{H}^{1/2}({\partial\Omega}_{N}).
\] 
{ By} Corollary \ref{ch2corinvM22a}, the right hand side $\mathcal{F}_{*}^{22}$ can be written in form \eqref{ch2cora1}-\eqref{ch2cora4} with $S_{1}={\partial\Omega}_{D}$ and $S_{2}={\partial\Omega}_{N}$. In addition, $(g_{*}, \boldsymbol{f}_{*}, \boldsymbol{\Psi}_{*}, \boldsymbol{\Phi}_{*})^{\top}=\mathcal{C}_{{\partial\Omega}_{D}, {\partial\Omega}_{N}}\mathcal{F}^{22}$ where the operator 
\begin{multline*}
\mathcal{C}_{{\partial\Omega}_{D}, {\partial\Omega}_{N}}:\bs{H}^{1,0}(\Omega ;\mathcal{A})\times \boldsymbol{H}^{-1/2}({\partial\Omega}_{D})\times \boldsymbol{H}^{1/2}({\partial\Omega}_{N})\\
\to L^{2}(\Omega)\times\boldsymbol{L}_{2}(\Omega)\times \boldsymbol{H}^{-1/2}({\partial\Omega})\times \boldsymbol{H}^{1/2}({\partial\Omega})
\end{multline*}
is continuous.

By Corollary \ref{ch2invBVPM} and Equivalence Theorem \ref{ch2thEQm11}, there exists a solution of the equation $\mathcal{M}^{22}_{*}\mathcal{X}=\mathcal{F}^{22}_{*}$. This solution can be represented as 
\[
\mathcal{X}=(\mathcal{M}^{22}_{*})^{-1}\mathcal{F}^{22}_{*}=[p,\bs{v},\bs{\psi},\bs{\phi}],
\] 
where the operator
 \begin{multline}\label{M22*inv}
(\mathcal{M}^{22}_{*})^{-1}:\bs{H}^{1,0}(\Omega ;\mathcal{A})\times \boldsymbol{H}^{-1/2}({\partial\Omega}_{D})\times \boldsymbol{H}^{1/2}({\partial\Omega}_{N})\\
\to \bs{H}^{1,0}(\Omega;\mathcal{A})\times\widetilde{\boldsymbol{H}}^{-1/2}({\partial\Omega}_{D})\times\widetilde{\boldsymbol{H}}^{1/2}({\partial\Omega}_{N}).
\end{multline}
 is given by 
 \begin{align}
& (p,\bs{v})
=\bs{\mathcal{A}}_{M}^{-1}[g_{*}, \boldsymbol{f}_{*}, r_{{\partial\Omega}_{D}}\boldsymbol{\Psi}_{*}, r_{{\partial\Omega}_{N}}\boldsymbol{\Phi}_{*}]
 =\bs{\mathcal{A}}_{M}^{-1}\mathcal{C}_{{\partial\Omega}_{D}, {\partial\Omega}_{N}}\mathcal{F}^{22}_{*},
 \label{7.34}\\
&\bs{\psi}=\bs{T}^{+}(p,\bs{v})-\bs{\Psi}_{*}
=\bs{T}^{+}(p,\bs{v})-(\mathcal{C}_{{\partial\Omega}_{D}, {\partial\Omega}_{N}}\mathcal{F}^{22}_{*})_3,
 \label{7.35}\\
&\bs{\phi}=\bs{\gamma}^{+}\bs{v}-\bs{\Phi}_{*}
 =\bs{\gamma}^{+}\bs{v}-(\mathcal{C}_{{\partial\Omega}_{D}, {\partial\Omega}_{N}}\mathcal{F}^{22}_{*})_4.
  \label{7.36}
\end{align}
Continuity of the operators in \eqref{7.34}-\ref{7.36} implies continuity of operator \eqref{M22*inv}.

\end{proof}

%
%
%
%

Let us express BIE system \eqref{ch2BIE1}-\eqref{ch2BIE2}, for $\mu\equiv 1$, in the matrix form as
\begin{equation}\label{ch2M22CC}
\mathring{\mathcal{M}}^{22}\mathring{\mathcal{X}}=\mathring{\mathcal{F}}^{22},
\end{equation}
where $\mathring{\mathcal{X}}=(\boldsymbol{\psi} , \boldsymbol{\varphi})$,
$
  \mathring{\mathcal{F}}^{22}=
  \left[ 
   r_{{\partial\Omega}_{D}}\left(\mathring{T}^{+}\boldsymbol{F} -\boldsymbol{\Psi}_{0}\right), 
   r_{{\partial\Omega}_{N}}\left(\gamma^{+}\boldsymbol{F}  -\boldsymbol{\Phi}_{0}\right),
\right]
$
and
\begin{equation}\label{ch2M22CC1}
  \mathring{\mathcal{M}}^{22}=
  \left[ {\begin{array}{cc}
   r_{{\partial\Omega}_{D}}\left(\dfrac{1}{2}\bs{I} - \mathring{\bs{\mathcal{W'}}}\right) & 
   r_{{\partial\Omega}_{D}}\mathring{\bs{\mathcal{L}}} \\ 
   -r_{{\partial\Omega}_{N}}\mathring{\bs{\mathcal{V}}}  & r_{{\partial\Omega}_{N}}\left(\dfrac{1}{2}\bs{I} + \mathring{\bs{\mathcal{W}}}\right)  \\
  \end{array} } \right],
\end{equation} 

The operator 
\begin{align}\label{M022op}
\mathring{\mathcal{M}}^{22}:\widetilde{\boldsymbol{H}}^{-1/2}({\partial\Omega}_{D})\times \widetilde{\boldsymbol{H}}^{1/2}({\partial\Omega}_{N}) \to \boldsymbol{H}^{-1/2}({\partial\Omega}_{D})\times \boldsymbol{H}^{1/2}({\partial\Omega}_{N}),
\end{align}
 is evidently continuous. 

We will further need the following extended system:
\begin{equation}
\widehat{\mathcal{M}}^{22}\mathcal{X}=\widehat{\mathcal{F}}^{22},
\end{equation}
where $\mathcal{X}=(p,\bs{v},\bs{\psi}, \boldsymbol{\varphi})^{\top}$, $ \widehat{\mathcal{F}}^{22} = ( 0, \bs{0}, \mathring{\bs{\mathcal{F}}}^{22}_{2},\mathring{\bs{\mathcal{F}}}^{22}_{3})^{\top}$ and 
\begin{equation}\label{ch2BHM22}
  \widehat{\mathcal{M}}^{22}=
  \left[ {\begin{array}{cccc}
  I\quad & 0\quad  &  -\mathring{{\Pi^s}} & \mathring{{\Pi^d}} \\
  0\quad  & \bs{I}\quad  & -\bs{\mathring{V}} & \bs{\mathring{W}} \\
   0\quad  & 0\quad & r_{{\partial\Omega}_{D}}\left(\dfrac{1}{2}\bs{I} - \mathring{\bs{\mathcal{W'}}}\right) & 
    r_{{\partial\Omega}_{D}}\mathring{\bs{\mathcal{L}}}  \\
    0\quad  & 0\quad  &  -r_{{\partial\Omega}_{N}}\mathring{\bs{\mathcal{V}}}  & r_{{\partial\Omega}_{N}}\left(\dfrac{1}{2}\bs{I} + \mathring{\bs{\mathcal{W}}}\right)
  \end{array} } \right].
\end{equation}
In virtue of Theorem \ref{ch2thM22inv} with $\mu =1$, the operator  
\begin{multline}\label{M*nar22hat} 
\widehat{\mathcal{M}}^{22}:\bs{H}^{1,0}(\Omega,\boldsymbol{\mathcal{A}})\times \widetilde{\boldsymbol{H}}^{-1/2}({\partial\Omega}_{D}) \times \widetilde{\boldsymbol{H}}^{1/2}({\partial\Omega}_{N})\\
\to \bs{H}^{1,0}(\Omega,\boldsymbol{\mathcal{A}})\times \boldsymbol{H}^{-1/2}({\partial\Omega}_{D})\times \boldsymbol{H}^{1/2}({\partial\Omega}_{N}).
\end{multline}
has a { continuous inverse.

\begin{theorem}\label{ch2thinvM22ring} Boundary integral operator \eqref{M022op} 
 is continuously invertible.
\end{theorem}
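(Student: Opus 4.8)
The plan is to read off invertibility of the small boundary integral operator $\mathring{\mathcal M}^{22}$ from the already established invertibility of the extended operator $\widehat{\mathcal M}^{22}$ in \eqref{M*nar22hat}, using the block–triangular structure of the matrix \eqref{ch2BHM22}. Continuity of operator \eqref{M022op} is immediate from the mapping properties of $\mathring{\bs{\mathcal V}}$, $\mathring{\bs{\mathcal W}}$, $\mathring{\bs{\mathcal W}}'$ and $\mathring{\bs{\mathcal L}}$ given in Theorems~\ref{ch2T3cal} and \ref{ch2jumps2}, and was already remarked after \eqref{M022op}.

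To prove surjectivity, take an arbitrary right hand side $(\mathring{\bs{\mathcal F}}^{22}_2,\mathring{\bs{\mathcal F}}^{22}_3)\in\boldsymbol H^{-1/2}({\partial\Omega}_D)\times\boldsymbol H^{1/2}({\partial\Omega}_N)$, set $\widehat{\mathcal F}^{22}=(0,\bs 0,\mathring{\bs{\mathcal F}}^{22}_2,\mathring{\bs{\mathcal F}}^{22}_3)^\top$, and let $\mathcal X=(p,\bs v,\bs\psi,\boldsymbol\varphi)^\top=(\widehat{\mathcal M}^{22})^{-1}\widehat{\mathcal F}^{22}$, which is well defined since operator \eqref{M*nar22hat} is continuously invertible by Theorem~\ref{ch2thM22inv} with $\mu\equiv1$. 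The last two rows of \eqref{ch2BHM22} then read exactly $\mathring{\mathcal M}^{22}(\bs\psi,\boldsymbol\varphi)^\top=(\mathring{\bs{\mathcal F}}^{22}_2,\mathring{\bs{\mathcal F}}^{22}_3)^\top$, so $(\bs\psi,\boldsymbol\varphi)\in\widetilde{\boldsymbol H}^{-1/2}({\partial\Omega}_D)\times\widetilde{\boldsymbol H}^{1/2}({\partial\Omega}_N)$ is a preimage. For injectivity, suppose $\mathring{\mathcal M}^{22}(\bs\psi,\boldsymbol\varphi)^\top=\bs 0$ with $(\bs\psi,\boldsymbol\varphi)$ in the above tilde spaces. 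Define $p$ and $\bs v$ by the representations \eqref{ch23rdp}, \eqref{ch23rdv} with $F_0=0$, $\bs F=\bs 0$, i.e.\ $p:=\mathring\Pi^s\bs\psi-\mathring\Pi^d\boldsymbol\varphi$ and $\bs v:=\mathring{\bs V}\bs\psi-\mathring{\bs W}\boldsymbol\varphi$. By Theorem~\ref{ch2T3} (relations \eqref{ch2H10PiW} with $\mu\equiv1$) we get $(p,\bs v)\in\bs H^{1,0}(\Omega;\bs{\mathcal A})$ and the first two rows of \eqref{ch2BHM22} hold with zero right hand side, while the last two rows hold by assumption. Hence $\mathcal X=(p,\bs v,\bs\psi,\boldsymbol\varphi)^\top$ solves $\widehat{\mathcal M}^{22}\mathcal X=\bs 0$, and invertibility of \eqref{M*nar22hat} forces $\mathcal X=\bs 0$; in particular $(\bs\psi,\boldsymbol\varphi)=\bs 0$.

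Thus operator \eqref{M022op} is a continuous bijection between Banach spaces, so its inverse is continuous by the bounded inverse theorem; equivalently, the inverse is the composition of the continuous injection $(\mathring{\bs{\mathcal F}}^{22}_2,\mathring{\bs{\mathcal F}}^{22}_3)\mapsto(0,\bs 0,\mathring{\bs{\mathcal F}}^{22}_2,\mathring{\bs{\mathcal F}}^{22}_3)^\top$ with $(\widehat{\mathcal M}^{22})^{-1}$ and the continuous projection onto the last two components. The only non-routine point in the argument is verifying that the lift $(p,\bs v)$ built from the representation formulas genuinely lies in $\bs H^{1,0}(\Omega;\bs{\mathcal A})$ and satisfies the first two rows of $\widehat{\mathcal M}^{22}$ exactly, which is precisely what \eqref{ch2H10PiW} with the vanishing data $F_0=0$, $\bs F=\bs 0$ provides; the remainder is the structural bookkeeping of the block–triangular system.
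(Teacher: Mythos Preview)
Your proof is correct and the surjectivity argument is identical to the paper's. The only difference is in the injectivity step: the paper invokes Corollary~\ref{ch2corBIE} (uniqueness for the BIE system via equivalence with the BVP), whereas you lift a putative null solution $(\bs\psi,\boldsymbol\varphi)$ to a full vector $\mathcal X=(p,\bs v,\bs\psi,\boldsymbol\varphi)$ satisfying $\widehat{\mathcal M}^{22}\mathcal X=\bs 0$ and then apply the already-established invertibility of \eqref{M*nar22hat} once more. Your route is slightly more self-contained, using a single tool (the inverse of $\widehat{\mathcal M}^{22}$) for both halves of the bijection, while the paper's route keeps the injectivity argument independent of the extended operator; both are short and either would be acceptable here.
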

}
\begin{proof}
A solution of the system \eqref{ch2M22CC} with an arbitrary right hand side 
\begin{equation}
\mathring{\mathcal{F}}^{22} = [\widehat{\bs{\mathcal{F}}}^{22}_{2},\widehat{\bs{\mathcal{F}}}^{22}_{3}]^{\top}\in \boldsymbol{H}^{-1/2}({\partial\Omega}_{D})\times \boldsymbol{H}^{1/2}({\partial\Omega}_{N})
\end{equation}
is given by the pair $(\bs{\psi}, \boldsymbol{\varphi})$ which satisfies the following extended system:
\begin{equation}
\widehat{\mathcal{M}}^{22}\mathcal{X}=\widehat{\mathcal{F}}^{22},
\end{equation}
where $\mathcal{X}=(p,\bs{v},\bs{\psi}, \boldsymbol{\varphi})^{\top}$, $ \widehat{\mathcal{F}}^{22} = ( 0, \bs{0}, \mathring{\bs{\mathcal{F}}}^{22}_{2},\mathring{\bs{\mathcal{F}}}^{22}_{3})^{\top}$ and the operator $\widehat{\mathcal{M}}^{22}$ is defined by \eqref{ch2BHM22}.
Since operator \eqref{M*nar22hat}  has a { continuous inverse, this implies that operator \eqref{M022op} is surjective. Corollary \ref{ch2corBIE} implies that operator \eqref{M022op} is also injective and thus an isomorphism.}

\end{proof}

Let us now consider the operator $\mathcal{M}_{*}^{22}$, defined by \eqref{ch2M22M}, in wider spaces,
\begin{multline}\label{M*nar22w} 
\mathcal{M}_{*}^{22}:L^{2}(\Omega)\times\boldsymbol{H}^{1}(\Omega)\times \widetilde{\boldsymbol{H}}^{-1/2}({\partial\Omega}_{D})\times \widetilde{\boldsymbol{H}}^{1/2}({\partial\Omega}_{N})\\
\to L^{2}(\Omega)\times\boldsymbol{H}^{1}(\Omega)\times \boldsymbol{H}^{-1/2}({\partial\Omega}_{D})\times \boldsymbol{H}^{1/2}({\partial\Omega}_{N}).
\end{multline}
Theorems~\ref{ch2thmUR:theo}-\ref{ch2T3cal} imply that operator \eqref{M*nar22w} is continuous and we can now prove its invertibility.
\begin{theorem}\label{ch2thM22star} 
Operator \eqref{M*nar22w} is continuously invertible.
\end{theorem}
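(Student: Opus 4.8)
The plan is to follow the compact--perturbation scheme used in part~(i) of the proof of Theorem~\ref{ch2thinvM11}: I would replace $\mathcal M_*^{22}$ by an auxiliary operator $\widetilde{\mathcal M}^{22}$ that is manifestly continuously invertible on the spaces of \eqref{M*nar22w}, and then show that $\mathcal M_*^{22}-\widetilde{\mathcal M}^{22}$ is compact between those spaces, so that \eqref{M*nar22w} is Fredholm with index $0$; injectivity then closes the argument.

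First I would set
\[
\widetilde{\mathcal M}^{22}=
\left[\begin{array}{cccc}
I & \mathcal R^{\bullet} & -\Pi^s & \Pi^d\\
\bs{0} & \bs{I} & -\bs{V} & \bs{W}\\
\bs{0} & \bs{0} & r_{{\partial\Omega}_D}\!\left(\tfrac12\bs{I}-\mathring{\bs{\mathcal W}}{}'\right) & r_{{\partial\Omega}_D}\widehat{\bs{\mathcal L}}\\
\bs{0} & \bs{0} & -r_{{\partial\Omega}_N}\bs{\mathcal V} & r_{{\partial\Omega}_N}\!\left(\tfrac12\bs{I}+\bs{\mathcal W}\right)
\end{array}\right],
\]
obtained from \eqref{ch2M22M} by deleting the remainder entries $\bs{\mathcal R}$, $r_{{\partial\Omega}_D}\bs{T}^{+}(\mathcal R^{\bullet},\bs{\mathcal R})$ and $r_{{\partial\Omega}_N}\gamma^{+}\bs{\mathcal R}$, and by replacing $\bs{\mathcal W}'$ and $\bs{\mathcal L}^{+}$ in the third row with their constant--coefficient analogues $\mathring{\bs{\mathcal W}}{}'$ and $\widehat{\bs{\mathcal L}}$; continuity of $\widetilde{\mathcal M}^{22}$ on the spaces of \eqref{M*nar22w} is immediate from Theorems~\ref{ch2thmUR:theo}--\ref{ch2T3cal}. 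To invert it I would solve its last two equations first: multiplying the fourth by $\mu$ and using the factorizations \eqref{ch2relationVWcal} and \eqref{ch2relationL}, one sees that $\widetilde{\bs\psi}$ together with $\widetilde{\bs\chi}:=\mu\widetilde{\bs\varphi}$ solves the constant--coefficient system $\mathring{\mathcal M}^{22}(\widetilde{\bs\psi},\widetilde{\bs\chi})^{\top}=\mathcal G$ with a right-hand side $\mathcal G\in\bs{H}^{-1/2}({\partial\Omega}_D)\times\bs{H}^{1/2}({\partial\Omega}_N)$; by Theorem~\ref{ch2thinvM22ring} this determines $(\widetilde{\bs\psi},\widetilde{\bs\chi})$ uniquely in $\widetilde{\bs{H}}^{-1/2}({\partial\Omega}_D)\times\widetilde{\bs{H}}^{1/2}({\partial\Omega}_N)$, and since $\mu,1/\mu\in\mathcal C^{\infty}(\overline{\Omega})$ are bounded away from $0$, multiplication by them is an isomorphism of $\widetilde{\bs{H}}^{1/2}({\partial\Omega}_N)$, so also $\widetilde{\bs\varphi}$ is uniquely determined; then $\widetilde{\bs{v}}$ follows from the second equation and $\widetilde p$ from the first, by the mapping properties in Theorems~\ref{ch2thmUR:theo} and \ref{ch2T3}. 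Hence $\widetilde{\mathcal M}^{22}$ is continuously invertible, and in particular Fredholm with index $0$, on the spaces of \eqref{M*nar22w}.

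Next I would verify that $\mathcal M_*^{22}-\widetilde{\mathcal M}^{22}$ is compact between those spaces. Its only nonzero entries are $\bs{\mathcal R}$ in position $(2,2)$, $r_{{\partial\Omega}_D}\bs{T}^{+}(\mathcal R^{\bullet},\bs{\mathcal R})$ in $(3,2)$, $-r_{{\partial\Omega}_D}(\bs{\mathcal W}'-\mathring{\bs{\mathcal W}}{}')$ in $(3,3)$, $r_{{\partial\Omega}_D}(\bs{\mathcal L}^{+}-\widehat{\bs{\mathcal L}})$ in $(3,4)$ and $r_{{\partial\Omega}_N}\gamma^{+}\bs{\mathcal R}$ in $(4,2)$. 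Compactness of each of these, with the indices that occur here, follows from Theorem~\ref{ch2thRcomp} (for $\bs{\mathcal R}\colon\bs{H}^{1}(\Omega)\to\bs{H}^{1}(\Omega)$, $\gamma^{+}\bs{\mathcal R}\colon\bs{H}^{1}(\Omega)\to\bs{H}^{1/2}({\partial\Omega})$ and $\bs{T}^{+}(\mathcal R^{\bullet},\bs{\mathcal R})\colon\bs{H}^{1}(\Omega)\to\bs{H}^{-1/2}({\partial\Omega})$), from relation \eqref{ch2relationTV} together with the smoothing of $\mathring{\bs{\mathcal V}}$ and the Rellich embedding on ${\partial\Omega}$ (for $\bs{\mathcal W}'-\mathring{\bs{\mathcal W}}{}'$; cf.\ also \eqref{cWpcomp}), and from Corollary~\ref{ch2Lcompact} with $S_1={\partial\Omega}_N$ (for $\bs{\mathcal L}^{+}-\widehat{\bs{\mathcal L}}$). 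Consequently operator \eqref{M*nar22w} is a compact perturbation of $\widetilde{\mathcal M}^{22}$ and is Fredholm with index $0$.

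Finally, by Equivalence Theorem~\ref{ch2thEQm11}(iii) together with Remark~\ref{ch2remM22}, the homogeneous system $\mathcal M_*^{22}\mathcal X=\bs{0}$ has only the trivial solution in $L^{2}(\Omega)\times\bs{H}^{1}(\Omega)\times\widetilde{\bs{H}}^{-1/2}({\partial\Omega}_D)\times\widetilde{\bs{H}}^{1/2}({\partial\Omega}_N)$, so operator \eqref{M*nar22w} is injective; an injective Fredholm operator of index $0$ is a bijection, whose inverse is bounded by the open mapping theorem, and \eqref{M*nar22w} is thus continuously invertible. The step I expect to demand most care is the compactness verification: one must check that each deleted or replaced entry is compact precisely with the smoothness indices arising once $(p,\bs{v})$ is moved into the larger space $L^{2}(\Omega)\times\bs{H}^{1}(\Omega)$, and that the rescaling $\widetilde{\bs\chi}=\mu\widetilde{\bs\varphi}$ keeps the support (``tilde'') conditions intact; both follow from results already established, but they are the points where the argument would fail if a needed mapping property were available only for larger $s$.
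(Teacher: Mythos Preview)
Your proof is correct and follows essentially the same compact--perturbation scheme as the paper: both introduce an auxiliary block--triangular operator $\widetilde{\mathcal M}^{22}$ whose bottom--right $2\times2$ block reduces (via the $\mu$--factorizations) to the invertible constant--coefficient operator $\mathring{\mathcal M}^{22}$ of Theorem~\ref{ch2thinvM22ring}, then invoke Theorems~\ref{ch2thRcomp}, \ref{ch2T3cal} and Corollary~\ref{ch2Lcompact} for compactness of the difference, and conclude injectivity from Theorem~\ref{ch2thEQm11}(iii). The only cosmetic difference is that the paper sets the $(1,2)$ entry of $\widetilde{\mathcal M}^{22}$ to $0$ rather than $\mathcal R^{\bullet}$ and packages the reduction as the global factorization $\widetilde{\mathcal M}^{22}=\mbox{diag}(1,\tfrac{1}{\mu}\bs{I},\bs{I},\tfrac{1}{\mu}\bs{I})\,\widehat{\mathcal M}^{22}\,\mbox{diag}(1,\mu\bs{I},\bs{I},\mu\bs{I})$, whereas you carry out the same substitution $\widetilde{\bs\chi}=\mu\widetilde{\bs\varphi}$ at the level of the last two equations; the content is the same.
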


\begin{proof}
Let us consider the operator 
\begin{equation}\label{ch2tildeM22}
  \widetilde{\mathcal{M}}^{22}=
  \left[ {\begin{array}{cccc}
   I\quad & 0\quad  &  -{\Pi^s} & {\Pi^d} \\
   0\quad & \bs{I}& -\boldsymbol{V} & \boldsymbol{W} \\
   0 & 0& r_{{\partial\Omega}_{D}}\left(\dfrac{1}{2}\bs{I} - \bs{\mathring{\mathcal{W'}}}\right) & 
   r_{{\partial\Omega}_{D}}\bs{\widehat{\mathcal{L}}} \\
   0 & 0& -r_{{\partial\Omega}_{N}}\bs{\mathcal{V}}  & r_{{\partial\Omega}_{N}}\left(\dfrac{1}{2}\bs{I} + \bs{\mathcal{W}}\right)
  \end{array} } \right],
\end{equation}
{ which is a compact perturbation of the operator \eqref{M*nar22w} due to Theorems \ref{ch2thRcomp}, \ref{ch2T3cal} and Corollary \ref{ch2Lcompact}.} Using relations \eqref{ch2relationTV} and \eqref{ch2relationVW}, we can express the operator $\widetilde{\mathcal{M}}^{22}$ in  the form
\begin{equation}
\widetilde{\mathcal{M}}^{22}= \mbox{diag}\left(1, \dfrac{1}{\mu}\bs{I} , \bs{I}, \dfrac{1}{\mu}\bs{I} \right)\widehat{\mathcal{M}}^{22}\mbox{diag}(1, \mu\bs{I} , \bs{I}, \mu\bs{I} ) 
\end{equation}
where $\mbox{diag} (a, b\bs{I},c\bs{I}, d\bs{I})$ represents a 10 by 10 diagonal matrix
\begin{equation}
  \mbox{diag}(a, b\bs{I},c\bs{I}, d\bs{I})=
  \left[ {\begin{array}{cccc}
   a & 0 & 0 & 0 \\
   0& \bs{b} & 0 & 0\\
   0& 0 & \bs{c} & 0\\
   0& 0 & 0 & \bs{d}
  \end{array} } \right].
\end{equation}

The operator $\widehat{\mathcal{M}}^{22}$ defined by \eqref{ch2BHM22} can be understood as a triangular block matrix operator with the three following diagonal operators
\begin{align}
I&:\,\,L^{2}(\Omega^{+})\to L^{2}(\Omega^{+}),\label{7.54}\\
I&:\,\, \boldsymbol{H}^{1}(\Omega^{+})\to \boldsymbol{H}^{1}(\Omega^{+}),\label{7.55}\\
\mathring{\mathcal{M}}^{22}&:\,\, \widetilde{\boldsymbol{H}}^{-1/2}({\partial\Omega}_{D})\times \widetilde{\boldsymbol{H}}^{1/2}({\partial\Omega}_{N})\to \boldsymbol{H}^{-1/2}({\partial\Omega}_{D})\times \boldsymbol{H}^{1/2}({\partial\Omega}_{N}).\label{7.56}
\end{align}

By Theorem \ref{ch2thinvM22ring}, { operator \eqref{7.56}} is invertible. 
Consequently, 
\begin{multline}\label{Mhatnar22w} 
\widehat{\mathcal{M}}^{22}:
L^{2}(\Omega)\times\boldsymbol{H}^{1}(\Omega)\times \widetilde{\boldsymbol{H}}^{-1/2}({\partial\Omega}_{D}) \times \widetilde{\boldsymbol{H}}^{1/2}({\partial\Omega}_{N})\\
\to L^{2}(\Omega)\times\boldsymbol{H}^{1}(\Omega)\times \boldsymbol{H}^{-1/2}({\partial\Omega}_{D})\times \boldsymbol{H}^{1/2}({\partial\Omega}_{N}).
\end{multline} 
is an invertible operator as well. As $\mu$ is strictly positive, the diagonal matrices are invertible and the operator  
\begin{multline}\label{Mtilnar22w} 
\widetilde{\mathcal{M}}^{22}:
L^{2}(\Omega)\times\boldsymbol{H}^{1}(\Omega)\times \widetilde{\boldsymbol{H}}^{-1/2}({\partial\Omega}_{D}) \times \widetilde{\boldsymbol{H}}^{1/2}({\partial\Omega}_{N})\\
\to L^{2}(\Omega)\times\boldsymbol{H}^{1}(\Omega)\times \boldsymbol{H}^{-1/2}({\partial\Omega}_{D})\times \boldsymbol{H}^{1/2}({\partial\Omega}_{N}).
\end{multline} 
is also invertible. Thus, { operator \eqref{M*nar22w} is a zero-index} Fredholm operator.
{ Invertibility of this operator then follows from its injectivity implied by Theorem \ref{ch2thEQm11}(iii).}
\end{proof}

The last three vector equations of the system (M22) are segregated from $p$. Therefore, we can define the new system given by equations \eqref{ch2M22v}-\eqref{ch2M22N} which can be written { in the matrix form} as
\begin{equation}\label{ch2M22y}
\mathcal{M}^{22}\mathcal{Y} = \mathcal{F}^{22},
\end{equation}
where $\mathcal{Y}$ represents the vector containing the unknowns of the system 
\[\mathcal{Y}=(\bs{v},\bs{\psi},\bs{\phi})\in \bs{H}^{1}(\Omega)\times \widetilde{\bs{H}}^{-1/2}({\partial\Omega}_{D})\times \widetilde{\bs{H}}^{1/2}({\partial\Omega}_{N}),\]
and the matrix operator $\mathcal{M}^{22}$ is given by
\begin{align} \label{M22mat}
 \mathcal{M}^{22}:=
  \left[ {\begin{array}{ccc}
   \boldsymbol{I}+\boldsymbol{\mathcal{R}}& -\boldsymbol{V} & \boldsymbol{W} \\
    r_{{\partial\Omega}_{D}}\boldsymbol{T}^{+}( \mathcal{R}^{\bullet},  \boldsymbol{\mathcal{R}}) & r_{{\partial\Omega}_{D}}\left(\dfrac{1}{2}\boldsymbol{I}-\boldsymbol{\mathcal{W'}}\right) & r_{{\partial\Omega}_{D}}\boldsymbol{\mathcal{L}}^{+} 
    \\
   r_{{\partial\Omega}_{N}}\gamma^{+}\boldsymbol{\mathcal{R}}& -r_{{\partial\Omega}_{N}}\boldsymbol{\mathcal{V}} & r_{{\partial\Omega}_{N}}\left(\dfrac{1}{2}\boldsymbol{I} + \boldsymbol{\mathcal{W}}\right)
  \end{array} } \right].
  \end{align}
  
Following the reasoning similar to the proof of Theorem \ref{ch2corinvM11}, we obtain the following assertion.
 \begin{theorem}The operator 
\[ \mathcal{M}^{22}:\boldsymbol{H}^{1}(\Omega)\times \widetilde{\boldsymbol{H}}^{-1/2}({\partial\Omega}_{D})\times \widetilde{\boldsymbol{H}}^{1/2}({\partial\Omega}_{N}) \to \boldsymbol{H}^{1}(\Omega)\times \boldsymbol{H}^{-1/2}({\partial\Omega}_{D})\times \boldsymbol{H}^{1/2}({\partial\Omega}_{N}),\]
  is continuous and continuously invertible. 
 \end{theorem}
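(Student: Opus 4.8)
The plan is to mirror exactly the proof of Theorem~\ref{ch2corinvM11} for the operator $\mathcal{M}^{11}$, transferring it to the $(M22)$ setting. First I would note that continuity of the operator $\mathcal{M}^{22}$ on $\boldsymbol{H}^{1}(\Omega)\times \widetilde{\boldsymbol{H}}^{-1/2}({\partial\Omega}_{D})\times \widetilde{\boldsymbol{H}}^{1/2}({\partial\Omega}_{N})$ is immediate from the mapping properties of the operators $\boldsymbol{\mathcal R}$, $\boldsymbol{V}$, $\boldsymbol{W}$, $\boldsymbol{T}^{+}(\mathcal R^\bullet,\boldsymbol{\mathcal R})$, $\boldsymbol{\mathcal V}$, $\boldsymbol{\mathcal W}$, $\boldsymbol{\mathcal W}'$, $\boldsymbol{\mathcal L}^{+}$ collected in Theorems~\ref{ch2thmUR:theo}, \ref{ch2T3}, \ref{ch2T3cal}, \ref{ch2jumps2} and Corollary~\ref{ch2Lcompact}.

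For invertibility, the strategy is a two-stage argument exactly as in the $(M11)$ case. In the first stage I would establish that $\mathcal{M}^{22}$ is a zero-index Fredholm operator: this follows from Theorem~\ref{ch2thM22inv} (continuous invertibility of the full operator $\mathcal{M}_*^{22}$ in \eqref{M*nar22}) together with the same "completion" device used in the proof of Theorem~\ref{ch2corinvM11}. Concretely, any right-hand side $\mathcal{F}^{22}\in \boldsymbol{H}^{1}(\Omega)\times \boldsymbol{H}^{-1/2}({\partial\Omega}_{D})\times \boldsymbol{H}^{1/2}({\partial\Omega}_{N})$ can be complemented by an arbitrary scalar $F_0\in L_2(\Omega)$ via equation \eqref{ch2M22p} to produce a right-hand side $\mathcal{F}_*^{22}$ of the extended system \eqref{ch2sM22}; since \eqref{M*nar22} (or, more precisely, its wider-space version \eqref{M*nar22w} established in Theorem~\ref{ch2thM22star}) is continuously invertible, this delivers a solution $(p,\boldsymbol{v},\boldsymbol{\psi},\boldsymbol{\varphi})\in L_{2}(\Omega)\times\boldsymbol{H}^{1}(\Omega)\times \widetilde{\boldsymbol{H}}^{-1/2}({\partial\Omega}_{D})\times \widetilde{\boldsymbol{H}}^{1/2}({\partial\Omega}_{N})$, whose last three components $(\boldsymbol{v},\boldsymbol{\psi},\boldsymbol{\varphi})$ solve \eqref{ch2M22y}; this gives surjectivity of $\mathcal{M}^{22}$.

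In the second stage I would prove injectivity by contradiction, as in the last line of the proof of Theorem~\ref{ch2corinvM11}: if $\mathcal{M}^{22}$ had a nontrivial kernel element $(\boldsymbol{v},\boldsymbol{\psi},\boldsymbol{\varphi})$, then by equation \eqref{ch2M22p} with $F_0=0$ one can recover the corresponding $p$ (namely $p=-\mathcal R^\bullet\boldsymbol{v}+{\Pi^s}\boldsymbol{\psi}-{\Pi^d}\boldsymbol{\varphi}$), and $(p,\boldsymbol{v},\boldsymbol{\psi},\boldsymbol{\varphi})$ would then be a nontrivial solution of the homogeneous system \eqref{ch2sM22}, contradicting the invertibility of operator \eqref{M*nar22w} from Theorem~\ref{ch2thM22star} (equivalently, contradicting the uniqueness in Theorem~\ref{ch2thEQm11}(iii) via Remark~\ref{ch2remM22}). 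Combining surjectivity and injectivity with the Fredholm-of-zero-index property and the open mapping theorem yields continuous invertibility.

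The only genuinely delicate point — which is why I would lean on Theorem~\ref{ch2thM22star} rather than re-deriving the Fredholm property from scratch — is the zero-index Fredholm claim for the boundary block of $\mathcal{M}^{22}$; that reduction ultimately rests on Theorem~\ref{ch2thinvM22ring} (invertibility of the constant-coefficient mixed BIE operator $\mathring{\mathcal M}^{22}$) and the compactness results of Theorems~\ref{ch2thRcomp}, \ref{ch2T3cal} and Corollary~\ref{ch2Lcompact}. Since all of these are already available in the excerpt, the proof of the present theorem is essentially a bookkeeping assembly: reindex the $(M11)$ argument, replace $\mathcal{M}_*^{11}$ by $\mathcal{M}_*^{22}$, and invoke Theorems~\ref{ch2thM22inv} and \ref{ch2thM22star} in place of Theorem~\ref{ch2thinvM11}. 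The main obstacle, if any, is purely notational care in tracking which boundary piece ($\partial\Omega_D$ versus $\partial\Omega_N$) carries which trace/traction jump in the splitting, so that the compact-perturbation operator $\widetilde{\mathcal{M}}^{22}$ is correctly identified — but this is handled verbatim by the construction in the proof of Theorem~\ref{ch2thM22star}.
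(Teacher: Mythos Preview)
Your proposal is correct and takes essentially the same approach as the paper, which does not give a detailed proof but simply states that the result follows ``the reasoning similar to the proof of Theorem~\ref{ch2corinvM11}''. Your write-up is in fact a faithful unpacking of what that reasoning amounts to in the $(M22)$ setting, invoking Theorems~\ref{ch2thM22star} and \ref{ch2thinvM22ring} in place of Theorem~\ref{ch2thinvM11} and Theorem~\ref{ch2thinvM11ring}; the only cosmetic point is that your ``first stage'' paragraph conflates the Fredholm-zero-index step with the surjectivity step, whereas in the paper's template these are two separate observations (the former coming from the compact-perturbation argument with $\widetilde{\mathcal M}^{22}$, the latter from the completion device), but you do disentangle them correctly by the end.
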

\subsection*{Acknowledgment}
This research was supported by the grants EP/H020497/1, EP/M013545/1, and 1636273 from
the EPSRC, UK, and also by the PhD Scholarship from Brunel University London for the second author.  
\bibliographystyle{plain}
\renewcommand{\bibname}{\Large References}
\begin{small}

\paragraph{S.E. Mikhailov$^1$, C.F. Portillo$^2$,\vspace{5pt}\\}
\begin{tabular}{ll}
$^1$ & {Department of Mathematics}  \\
&{Brunel University London} \\ 
&{Kingston Lane} \\ 
&{UB8 3PH, Uxbridge, Middlesex, UK.}\\
$^2$ & {Department of Mathematical Sciences}  \\
&{Oxford Brookes University} \\ 
&{Wheatley Campus } \\ 
&{OX33 1HX, Wheatley, Oxfordshire, UK.} \\  
\end{tabular}
\end{small}

\end{document}